\newtheorem{thm}{Theorem}
\newtheorem{lem}[thm]{Lemma}
\newtheorem{exam}[thm]{Example}
\date{}
\begin{document}

\title{On efficiency of nonmonotone Armijo-type line searches}
\author{Masoud Ahookhosh \and Susan Ghaderi}

\institute{M. Ahookhosh
           \at Faculty of Mathematics, University of Vienna, Oskar-Morgenstern-Platz 1, 1090 Vienna,
            Austria\\
           \email{masoud.ahookhosh@univie.ac.at}
           \and S. Ghaderi
           \at Department of Mathematics, Faculty of Science, Razi University,
              Kermanshah, Iran \\ 
           \email{susan.ghaderi23@gmail.com}                
            }
\maketitle

\begin{abstract}
Monotonicity and nonmonotonicity play a key role in studying the global convergence and the efficiency of iterative schemes employed in the field of nonlinear optimization, where globally convergent and computationally efficient schemes are explored. This paper addresses some features of descent schemes and the motivation behind nonmonotone strategies and investigates the efficiency of an Armijo-type line search equipped with some popular nonmonotone terms. More specifically, we propose two novel nonmonotone terms, combine them into Armijo's rule and establish the global convergence of sequences generated by these schemes. Furthermore, we report extensive numerical results and comparisons indicating the performance of the nonmonotone Armijo-type line searches using the most popular search directions for solving unconstrained optimization problems. Finally, we exploit the considered nonmonotone schemes to solve an important inverse problem arising in signal and image processing.

\keywords{Unconstrained optimization\and Armijo-type line search\and Nonmonotone
strategy \and Global convergence\and Computational efficiency\and First- and second-order black-box information}
\subclass{90C30 \and 65K05}
\end{abstract}

\section{Introduction} \label{intro}
In this paper, we shall be concerned with some iterative schemes for solving the unconstrained minimization problem   
\begin{equation}\label{e.func} 
\begin{array}{ll} 
\textrm{minimize}   &  ~  f(x)\\ 
\textrm{subject to} &  ~  x\in \mathbb{R}^n .
\end{array} 
\end{equation} 
where $f:\mathbb{R}^n\rightarrow \mathbb{R}$  is a real-valued nonlinear function, which is bounded and continuously-differentiable. We suppose that first- or second-order black-box information of $f$ is available.\\\\
{\bf Motivation \& history.} Over the last five decades many iterative schemes for locally solving (\ref{e.func}) have been established according to the availability of information of the objective function $f$. Indeed, the conventional approaches are {\bf \emph{descent}} methods, also called {\bf \emph{monotone}} methods, generating a sequence of iterations such that the corresponding sequence of function values is monotonically decreasing, see \cite{Fle,NocW}. There exists a variety of descent methods that are classified in accordance with required information of the objective function in terms of computing function values and derivatives. More precisely, the availability of first- and second-order black-box information leads to two prominent classes so-called {\bf \emph{first-}} and {\bf \emph{second-order}} methods, where first-order methods only need function and gradient evaluations, and second-order methods require function and gradient and Hessian evaluations, see \cite{NesBo}.

In general, descent methods determine a descent direction $d_k$, specify a step-size $\alpha_k \in (0,1]$ by an inexact line search such as Armijo, Wolfe or Goldstein backtracking schemes, generate a new iteration by setting $x_{k  +  1} = x_k  +  \alpha_k d_k$, and repeat this scheme until a stopping criterion holds. The key features of these methods is characterized by choosing an appropriate inexact line search guaranteeing that 
{\renewcommand{\labelitemi}{$\bullet$} 
\begin{itemize} 
\item the sequence of function values is monotonically decreasing, i.e.,  $f_{k  +  1} \leq f_k$ where $f_k = f(x_k)$; 
\item the sequence $\{x_k\}$ is convergent globally meaning that the method is convergent for an arbitrary initial point $x_0$, especially when $x_0$ is far away from the minimizer.
\end{itemize}  
The first property seems natural due to the aim of minimizing the objective function, and the second feature makes the method independent on the initial point $x_0$. In particular, Armijo's line search satisfies 
\begin{equation}\label{e.armijo} 
 f(x_k  +  \alpha_k d_k) \leq f_k  +  \sigma \alpha_k g_k^T d_k,
\end{equation} 
where $g_k = \nabla f(x_k)$,  $\sigma \in (0, \frac{1}{2})$, and $\alpha_k$ is the largest $\alpha \in \{s, \rho s, \cdots\}$ with $s > 0$ and $\rho \in (0,1)$ such that (\ref{e.armijo}) holds, see \cite{Arm}. Since the direction $d_k$ is descent, i.e. $g_k^T d_k < 0$, function values satisfy the condition $f_{k  +  1} \leq f_k$ imposing the monotonicity to the sequence $\{f_k\}$ generated by this scheme. Moreover, it is globally convergent, see for example \cite{NocW}. A version of descent algorithms using Armijo's rule is outlined in the following:\\\\
\begin{algorithm}[H] \label{a.data} 
\DontPrintSemicolon
\KwIn{$x_0 \in \mathbb{R}^n$ ,  $\rho \in (0 ,  1)$ ,  $\sigma\in (0 ,  \frac{1}{2})$ ,
$s \in (0,1]$ ,  $\epsilon>0$;} 
\KwOut{$x_b$; $f_b$;} 
\Begin{$k \leftarrow 0$; ~ compute $f_0$;\; 
 \While {$\|g_k\| \geq \epsilon$}{ 
         generate a descent the direction $d_k$;\; 
         $\alpha \leftarrow s$; $\hat{x}_k \leftarrow x_k+\alpha d_k$;\; 
         \While {$f(\hat{x}_k) > f_k  +  \sigma \alpha g_k^T d_k $}{  
             $\alpha \leftarrow \rho \alpha$; 
             $\hat{x}_k \leftarrow x_k+\alpha d_k$;\; 
        }
         $x_{k+1} \leftarrow \hat{x}_k$; ~  $k \leftarrow k+1$;\; 
    }
     $x_b \leftarrow x_k, ~  f_b \leftarrow f_k$; 
}
\caption{ {\bf DATA} (descent Armijo-type algorithm)} 
\end{algorithm} 

\vspace{5mm}
Despite the advantages considered for imposing monotonicity to the sequence of function values, it causes some difficulties. We here mention two important cases:
\begin{itemize} 
\item The algorithm losses its efficiency if an iteration is trapped close to the bottom of a curved narrow valley of the objective function, where the monotonicity enforces iterations to follow the valley's floor causing very short steps or even undesired zigzagging, see for example \cite{GriLL1,Toi}. In the sequel, we will verify this fact in Examples 1 and 2;  
\item The Armijo-type backtracking line search can break down for small step-sizes because of the condition $f(x_k  +  \alpha d_k) \simeq f_k$ and rounding errors. In such a situation, the step $x_k$ may still be far from the minimizer of $f$, however, the Armijo condition cannot be verified because the function values required to be compared are indistinguishable in the floating-point arithmetic, i.e.,
\begin{equation*}
0 \simeq f(x_k  +  \alpha d_k) - f_k > \sigma \alpha g_k^T d_k 
\end{equation*}
 since $g_k^T d_k < 0 $ and $\sigma, \alpha>0$, see \cite{BorH}. 
\end{itemize}  
These disadvantages of descent methods have inspired many researchers to work on some improvements to avoid such drawbacks. In the remainder of this section, some of these developments will be reviewed.

According to the availability of first- or second-order information of $f$, the direction $d_k$ can be determined in various ways imposing different convergence theories and computational results, \cite{Fle,NocW, NesBo}. On the one hand, first-order methods only need function values and gradients leading to low memory requirement making them appropriate to solve large-scale problems. On the other hand, if second-order information is available, the classical method for solving (\ref{e.func}) is {\bf \emph{Newton}}'s method producing an excellent local convergence. More specifically, Newton's method minimizes the quadratic approximation of the objective function, where the corresponding direction is derived by solving the linear system  
\begin{equation}\label{e.newton} 
 H_k d_k =  -  g_k,
\end{equation} 
in which $H_k$ is Hessian of $f$ evaluated at the current iterate $x_k$. Indeed, if Hessian is positive definite and the dimension of the problem is not very large, Newton's method is possibly the most successful descent method for minimizing a twice continuously-differentiable function. The derivation of Newton's method implies that it converges to the stationary point of a quadratic function in one iteration. However, for general functions, it usually exhibits a quadratic convergence rate near the solution, however, there is no reason to expect that Newton's method behaves well if $x_0$ is chosen far away from the optimizer $x^*$, see \cite{OrtR}. This implies that Newton's method can be enhanced to obtain the global convergence, which is the convergence to a stationary point from an arbitrary starting point $x_0$ that may be far away from it. A globally convergent modification of Newton's method is called {\bf \emph{damped Newton}}'s method exploiting Newton's direction (\ref{e.newton}) and a line search similar to that discussed in Algorithm 1.  

The sequence produced by Algorithm 1 converges to an $\epsilon$-solution $x^*$ satisfying $\|\nabla f(x^*)\| < \epsilon$, which is by no means sufficient to guarantee that $x^*$ is a local minimizer. Indeed, it can converge to a local maximizer or a saddle point. Furthermore, if the iterate $x_k$ is trapped in the bottom of a deep narrow valley, it generates very short steps to keep the monotonicity resulting to a very slow convergence. This fact clearly means that employing a monotone line search to ensure the global convergence can ruin the excellent local convergence of Newton's method. We verify this fact in the next example.

\begin{exam} 
Consider two-dimensional Rosenbrock's function
\begin{equation*}
f(x) = 100(x_2-x_1^2)^2+(1-x_1)^2.
\end{equation*}
We solve the problem (\ref{e.func}) by Newton's method and damped Newton's method with the initial point $x_0 = (-\frac{1}{10}, \frac{1}{10})$. It is clear that $(1, 1)$ is the optimizer. The implementation indicates that damped Newton's method needs 15 iterations and 17 function evaluations while Newton's method needs only 7 iterations and 8 function evaluations. To study the result more precisely, we depict the contour plot of the objective function and iterations attained by these two algorithms in Figure 1. Subfigure (a) of Figure 1 shows that iterations of damped Newton's method follow the bottom of the valley in contrast to those for Newton's method that can go up and down to reach the $\epsilon$-solution with the accuracy parameter $\epsilon = 10^{-5}$. We see that Newton's method attains larger step-sizes compared with those of damped Newton's method. Subfigure (b) of Figure 1 illustrates function values versus iterations for both algorithms showing that the related function values of damped Newton's method decreases monotonically while it is fluctuated nonmonotonically for Newton's method.
\begin{figure}[h]\label{comr} 
\centering 
\subfloat[][iteration]{\includegraphics[width=7.7cm]{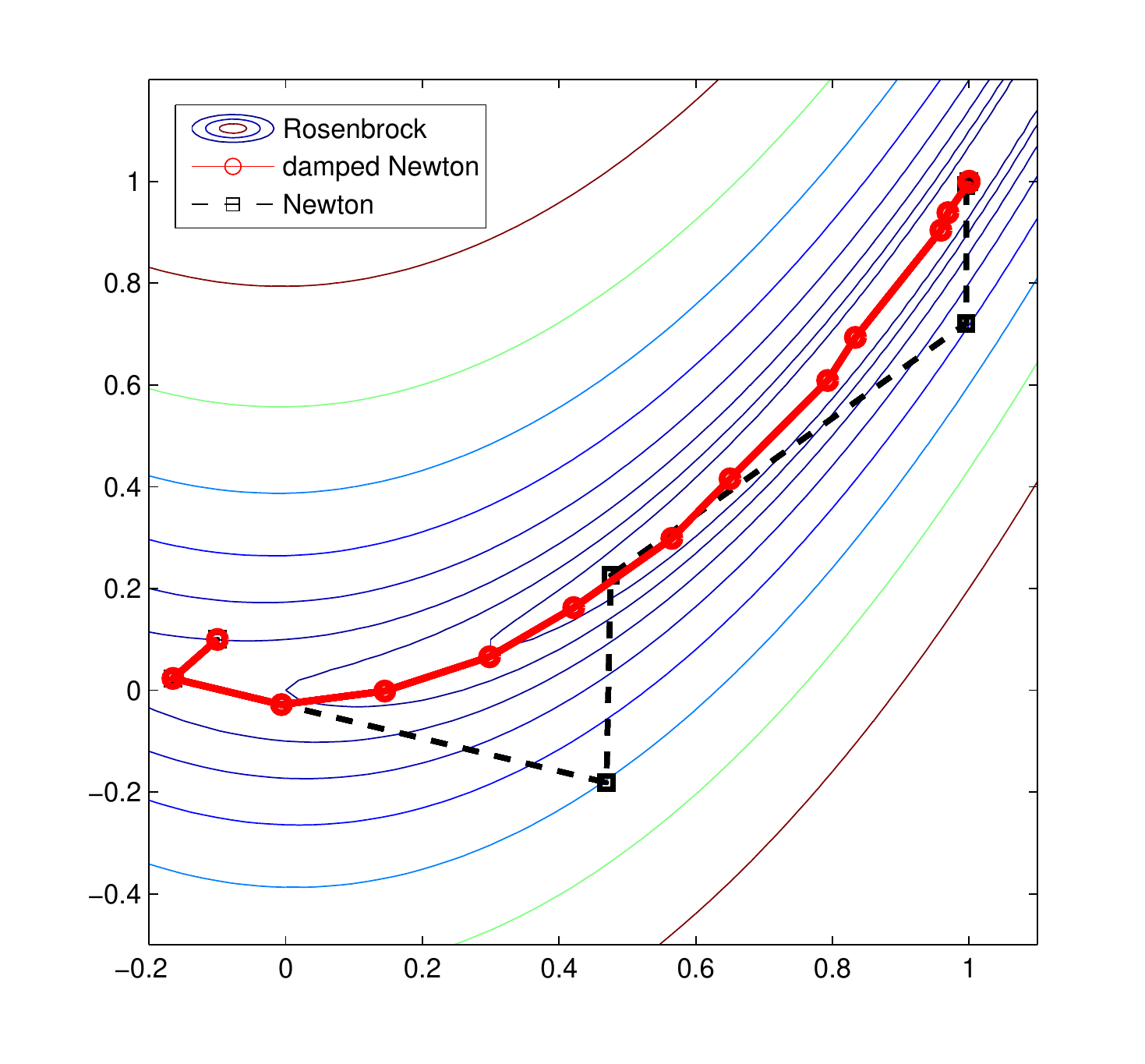}}%
\qquad 
\subfloat[][function evaluation]{\includegraphics[width=7.45cm]{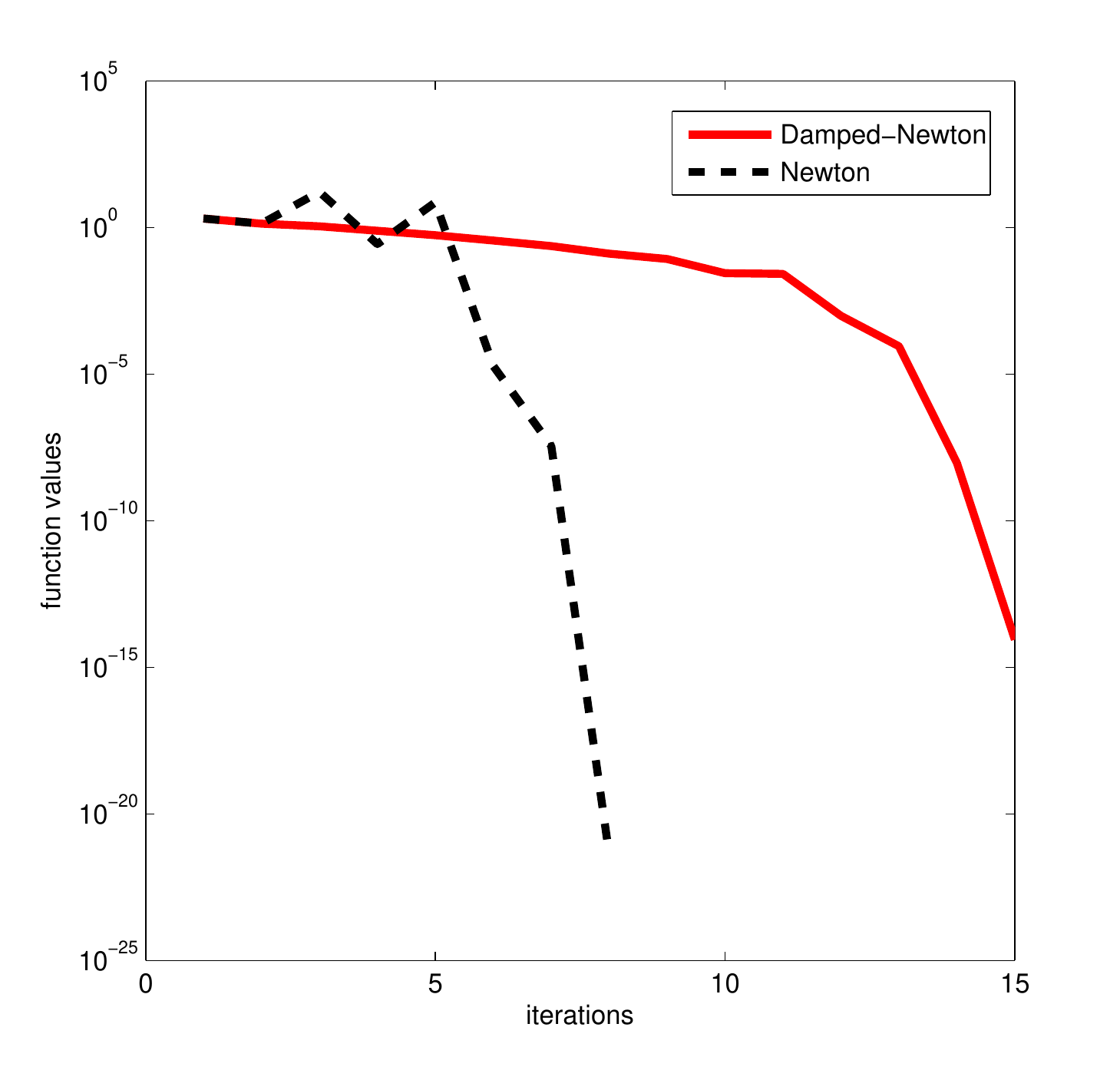}} 
\caption{A comparison between Newton and damped Newton methods: (a) the contour plot of Rosenbrock's function and iterations of Newton and damped Newton methods; (b) the function values vs. iterations.} 
\end{figure} 
\end{exam}  
 
Thanks to the increasing interest in using nonlinear optimization during the few past decades and to avoid the above-mentioned drawbacks of monotone schemes, many researchers have conducted lots of investigations on developing methods guaranteeing the global convergence and preserving the local convergence rate of descent methods at the same time. The pioneering work dating back to 1986 proposed by {\sc Grippo} et al. in \cite{GriLL1} introducing a variant of Armijo's rule using the term $f_{l(k)}$ in place of $f_k$ in (\ref{e.armijo}) defined by 
\begin{equation}\label{e.max} 
f_{l(k)} = \max_{0\leq j\leq m(k)} \{f_{k-j}\},~~  k=0,1,2,\cdots,
\end{equation} 
where $m(0)=0$ and $0\leq m(k)\leq \min\{m(k-1)+1,N\}$ with a positive constant $N \in \mathbb{N}$. The fact that $f_{l(k)} \geq f_k$ along with the convergence theory presented in \cite{GriLL1} reveal the following properties of the modified Armijo-type line search :
\begin{itemize} 
\item The sequence $\{x_k\}$ generated by the new scheme is still globally convergent to first-order stationary points of $f$; 
\item The function value at the new point $x_{k  +  1} = x_k  +  \alpha_k d_k$ can be  greater than $f_k$, so the sequence of function values $\{f_k\}$ is not monotonically decreasing, similar to the natural behaviour of the pure Newton method. However, the subsequence $\{f_{l(k)}\}$ of $\{f_k\}$ is still monotonically decreasing, see \cite{GriLL1}.
\item The right hand side of the new Armijo-type line search is greater than original Armijo's rule implying that the new method can take  bigger step-sizes compared to descent methods using original Armijo's rule (\ref{e.armijo}); 
\item In original Armijo's rule, if no step-size can be found to satisfy (\ref{e.armijo}), the algorithm usually stops by rounding errors preventing further progress. Since $f_{l(k)} \geq f_k$, it is much less possible that rounding errors break down the new nonmonotone line search.
\end{itemize} 
Since the new line search is not imposing the monotonicity to the sequence of function values, it is called {\bf \emph{nonmonotone}}. The corresponding numerical results for the nonmonotone Armijo's rule reported in \cite{GriLL1, Toi} are totally interesting. We verify the efficiency of this scheme in the subsequent example for the gradient descent direction and the Barzilai-Borwein direction described in Section 3.3.  

\begin{exam} \label{ex.bb}
We now consider Rosenbrock's function described in Example 1 and solve (\ref{e.func}) by the gradient descent method and a version of Barzilai-Borwein method using the nonmonotone line search of {\sc Grippo} et al. in \cite{GriLL1}. In our implementation, gradient descent and Barzilai-Borwein methods require 11987 and 45 iterations and 118449 and 53 function evaluations, respectively. Subfigure (a) of Figure 2 implies that iterations of the gradient descent method zigzag at the bottom of the valley while the iterations of the Barzilai-Borwein method go up and down from both sides of the valley. Subfigure (b) implies that the Barzilai-Borwein method is substantially superior to the gradient descent method and the corresponding sequence of function values behaves nonmonotonically in contrast to that for the gradient descent method. 
 
\begin{figure}[h]\label{comr} 
\centering 
\subfloat[][iteration]{\includegraphics[width=7.7cm]{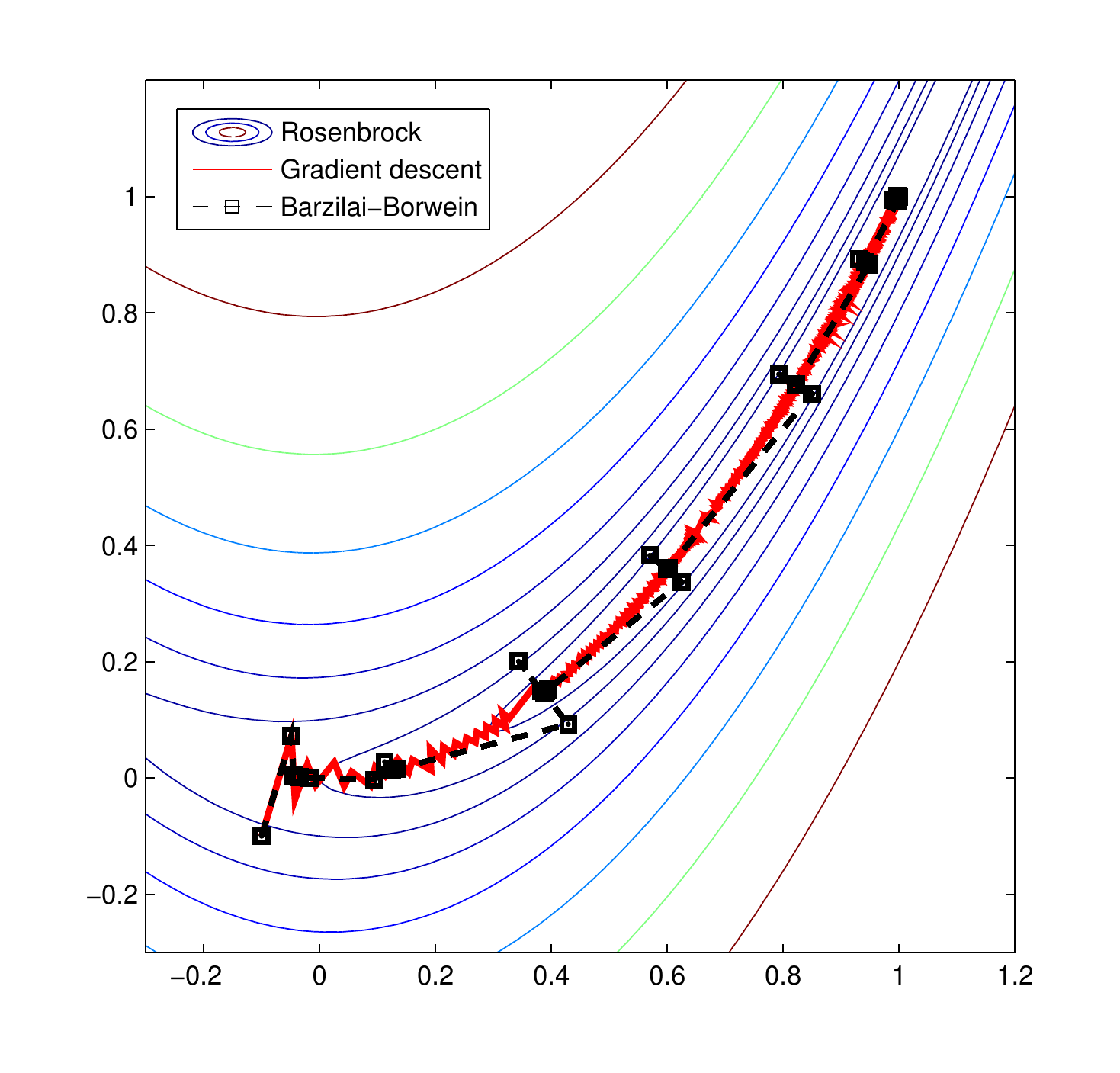}}%
\qquad 
\subfloat[][function evaluation]{\includegraphics[width=7.7cm]{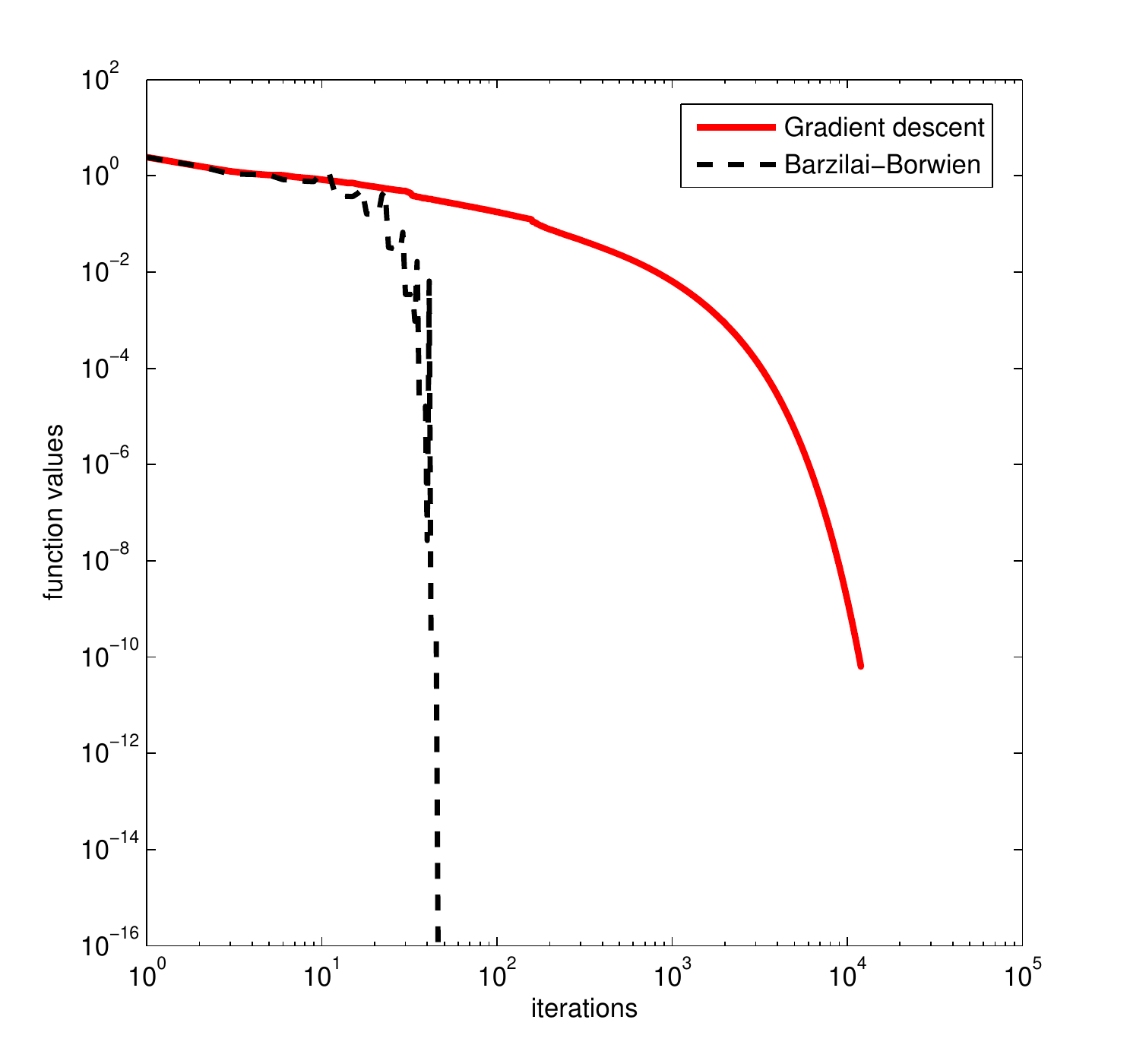}} 
\caption{A comparison between gradient descent and Barzilai-Borwein methods: (a) the contour plot of Rosenbrock's function and iterations of the considered methods; (b) the function values vs. iterations.} 
\end{figure} 
\end{exam}

Later the nonmonotone term (\ref{e.max}) was used in a more sophisticated algorithm by {\sc Grippo} et al. in \cite{GriLL3}, and they also proposed a nonmonotone truncated Newton method in \cite{GriLL2}. {\sc Toint} in \cite{Toi} conducted extensive numerical results and proposed a new nonmonotone term. For more references, see also \cite{BonPTZ,CarST,ChaPLP,Dai,PanT}. In 2004, some disadvantages of the nonmonotone term (\ref{e.max}) were discovered by {\sc Zhang} and {\sc Hager} in \cite{ZhaH}, and to avoid them the following nonmonotone term was proposed
\begin{equation}\label{e.ck} 
\begin{array}{l} 
   C_{k}=\left\{ 
   \begin{array}{ll} 
     f_k                                &~~ \mathrm{if}~ k=0; \\ 
     (\eta_{k-1}Q_{k-1}C_{k-1}+f_k)/Q_k &~~ \mathrm{if}~ k \geq 1, 
   \end{array}  \right. 
   ~~~
   Q_{k}=\left\{%
   \begin{array}{ll} 
     1                   &~~ \mathrm{if}~ k=0; \\ 
     \eta_{k-1}Q_{k-1}+1 &~~ \mathrm{if}~ k \geq 1, \\ 
   \end{array} \right. 
\end{array} 
\end{equation} 
where $\eta_{k-1}\in[\eta_{min},\eta_{max}]$ with $\eta_{min}\in[0,1]$ and $\eta_{max}\in[\eta_{min},1]$. It is easy to see that this term is a weighted combination of all accepted function values of their scheme. Their algorithm combines the new term (\ref{e.ck}) into a Wolfe-type line search producing favourable results. Recently, another term constructed based of a convex combination of all former successful function values investigated by {\sc Mo } et al. in \cite{MoLY} and {\sc Ahookhosh} et al. in \cite{AhoAB}, where it is defined by
\begin{equation}\label{e.dk} 
D_{k}=\left\{ 
\begin{array}{ll} 
     f_0                         &~~ \mathrm{if}~ k=0; \\ 
     f_k+\eta_{k-1}(D_{k-1}-f_k) &~~ \mathrm{if}~ k \geq 1,  \\ 
\end{array}  \right.  \\ 
\end{equation} 
in which $\eta_{k-1}\in[\eta_{min},\eta_{max}]$ with $\eta_{min}\in[0,1]$ and $\eta_{max}\in[\eta_{min},1]$. Combination of this term by Armijo's rule shows a promising computational behaviour, see \cite{AhoAB}. More recently, {\sc Amini} et al. in \cite{AmiAN} proposed a new nonmonotone term relaxing the max-based term (\ref{e.max}) by an adaptive convex combination of $f_{l(k)}$ and $f_k$, which is defined by
\begin{equation}\label{e.rk} 
 R_k=\eta_k f_{l(k)}+(1-\eta_k)f_k,
\end{equation} 
in which $\eta_{k}\in[\eta_{min},\eta_{max}]$ with $\eta_{min}\in[0,1]$ and $\eta_{max}\in[\eta_{min},1]$, see also \cite{AhoA,AhoAP}. This nonmonotone term exploits an adaptive determination of the convexity parameter $\eta_k$. Indeed, it uses bigger $\eta_k$ far from the optimizer and smaller ones close to it. The reported numerical results, which was tested for Brazilai-Borwein, LBFGS and truncated Newton directions, indicate that by using an appropriate sequence $\{\eta_k\}$ the scheme behaves favourably. 

An algorithm is considered to be efficient if its computational cost to reach an $ \varepsilon $-solution optimizer is at or below some prescribed level of running time and memory usage. In general, the efficiency depend on the way in which the input data is arranged and can be measured by various measures that are generally depend on the size of the input data. In practice, there are some more factors which can affect the efficiency of an algorithm, such as requirements for accuracy and reliability. Since the most computational cost and the related running time of Armijo-type line searches are dependent on the computation of function values and gradients, we here measure the efficiency of an algorithm by counting the number of iterations ($N_i$), the number of gradient evaluations ($N_g$), the number of function evaluations ($N_f$) and some combination of them.\\\\ 
{\bf Contribution \& organization.} This paper addresses some nonmonotone terms and combines them into Armijo's line search (\ref{e.armijo}) for solving the unconstrained problem (\ref{e.func}). It is clear that the nonmonotone terms (\ref{e.ck}) and (\ref{e.dk}) use all previous function values, however if the initial point of an algorithm is far away from the optimizer, it does not make sense to use initial function values to construct a nonmonotone term that cannot tell us too much about the local behaviour of the objective function. In such a case, we prefer to just use the last $N$ function values to construct the new nonmonotone terms. In particular, we propose two novel nonmonotone terms, where the basic idea is to construct the new nonmonotone terms by a convex combination of the last $N$ successful function values if the current iteration counter $k$ is greater than or equal to a positive integer $N$. In case $k \leq N$, we exploit either the nonmonotone terms (\ref{e.max}) or (\ref{e.dk}). The global convergence to first-order stationary points as well as convergence rates are established under some suitable conditions. The efficiency of Armijo's rule using state-of-the-art nonmonotone terms involving new ones are evaluated by doing extensive numerical experiments on a set of unconstrained optimization test problems.

The remainder of this paper organized as follows. In Section 2, we derive two novel nonmonotone terms and establish and algorithmic framework along with its convergence analysis. Numerical results on a set of various directions are reported in Section 3. Finally, some conclusions are given in Section 4.

\section{New algorithm and its convergence} 
This section addresses two novel nonmonotone strategies and unifies them with Armijo's rule (\ref{e.armijo}) to achieve efficient schemes for solving the problem (\ref{e.func}). As discussed in Section 1, the nonmonotone term of {\sc Grippo} et al. involves some disadvantages, see \cite{AhoAB,AmiAN,ZhaH}. One claim is that the term $f_{l(k)}$ is sometimes too much big allowing jump over the optimizer, especially close to the optimizer. Furthermore, the nonmonotone terms (\ref{e.ck}) and (\ref{e.dk}) exploit all previous function values that may decrease the effect of more recent function values in these terms. In the remainder of this section, we construct two novel nonmonotone terms using a convex combination of a few past function values. 

Let fix the current iteration $k$ and the number $N \in \mathbb{N}$. The main idea is to set up a nonmonotone term determined by a convex combination of the last $k$ successful function values if $k < N$ and by a convex combination of the last $N$ successful function values if $k \geq N$. In the other words, we produce new terms using function values collected in the set 
\begin{equation} 
 \mathcal{F}_k = \left\{ 
\begin{array}{ll} 
\{f_0,f_1, \cdots, f_k\}             &~~ \mathrm{if}~ k < N;\\ 
\{f_{k-N+1},f_{k-N+2}, \cdots, f_k\} &~~ \mathrm{if}~ k \geq N,
\end{array} \right.
\end{equation} 
which should be updated in each iteration. To this end, motivated by the term (\ref{e.dk}), we construct a new term $\overline{T}_k$ using the subsequent procedure    
\begin{equation*} 
\left\{ 
\begin{array}{lll} 
\overline{T}_0     & = f_0                                                               &  ~~\mathrm{if}~k = 0;\\ 
\overline{T}_1     & = (1-\eta_{0})f_1 + \eta_{0} f_0                                    &  ~~\mathrm{if}~k = 1;\\ 
\overline{T}_2     & = (1-\eta_{1})f_2 + \eta_{1}(1-\eta_{0}) f_1 + \eta_{1}\eta_{0} f_0 &  ~~\mathrm{if}~k = 2;\\ 
\vdots             & \vdots                                                              &  ~~  \vdots\\ 
\overline{T}_{N-1} & = (1-\eta_{N - 2})f_{N - 1} + \eta_{N - 2}(1-\eta_{N - 3}) f_{N-2}  
             + \cdots + \eta_{N - 2} \cdots \eta_{0} f_0                                 &  ~~\mathrm{if}~k = N  - 1;\\ 
\overline{T}_N     & = (1-\eta_{N - 1})f_N + \eta_{N - 1}(1-\eta_{N - 2}) f_{N-1}  
            +  \cdots  +  \eta_{N  -  1} \cdots \eta_{0} f_0                             &  ~~\mathrm{if}~k = N;\\
\vdots             & \vdots                                                              &  ~~  \vdots\\ 
\overline{T}_k     & = (1-\eta_{k-1})~f_k  +  \eta_{k-1}(1-\eta_{k-2}) ~ f_{k-1}  
            +  \cdots + \eta_{k-1} \cdots \eta_{k-N} ~  f_{k-N}                             &  ~~\mathrm{if}~k \geq N;
\end{array} 
\right.
\end{equation*} 
where $\eta_i \in [0,1)$ for $i = 0, 1, \cdots, N$ are some weight parameters. Hence the new nonmonotomne term is generated by
\begin{equation}\label{e.tk} 
\overline{T}_k  : = \left\{ 
\begin{array}{ll} 
 (1-\eta_{k-1}) f_k  +  \eta_{k-1} \overline{T}_{k-1}      &~~ \mathrm{if}~ k < N;\\ 
 (1-\eta_{k-1})~f_k  +  \eta_{k-1}(1-\eta_{k-2}) ~ f_{k-1}  
+  \cdots + \eta_{k-1} \cdots \eta_{k-N} ~  f_{k-N}        &~~ \mathrm{if}~ k \geq N,
\end{array} \right.
\end{equation} 
where $ \overline{T}_0 = f_0$ and $\eta_i \in [0,1)$ for $i = 0, 1, \cdots, k$. To show that $\overline{T}_k$ is a convex combination of the function values collected in the set $\mathcal{F}_k$, it is enough to show the summation of multipliers are equal to one. For $k \geq N$, the definition for $\overline{T}_k$ implies  
\begin{equation}\label{e.conv} 
 (1-\eta_{k  -  1})  +  \eta_{k  -  1}(1-\eta_{k  -  2})  +  \cdots  +  \eta_{k  -  1} \cdots \eta_{k  -  N  -  1} (1-\eta_{k  -  N}) +  \eta_{k  -  1} \cdots \eta_{k  -  N} = 1. 
\end{equation} 
For $k < N$, a similar equality shows that a summation of the last $k$ multipliers is equal to one. Therefore, the generated term $\overline{T}_k$ is a convex combination of the elements of $\mathcal{F}_k$.

The definition of $\overline{T}_k$ clearly implies that the set $\mathcal{F}_k$ should be updated and saved in each iteration. Moreover, $N(N  +  1)/2$ multiplications are required to compute $ \overline{T}_k$. To avoid saving $\mathcal{F}_k$ and decrease the required number of multiplications, we will derive a recursive formula for (\ref{e.tk}). From the definition of $ \overline{T}_k$ for $k \geq N$, it follows that 
\begin{equation*} 
\begin{split} 
 \overline{T}_k  -  \eta_{k  -  1}  \overline{T}_{k  -  1} & = (1-\eta_{k  -  1})~f_k  +  \eta_{k  -  1}(1-\eta_{k  -  2}) ~  f_{k  -  1}  
+  \cdots  +  \eta_{k  -  1} \cdots \eta_{k  -  N} ~  f_{k  -  N}\\ 
 & -  \eta_{k  -  1} (1-\eta_{k  -  2})~f_{k  -  1}  -  \cdots  -  \eta_{k  -  1} \cdots (1-\eta_{k  -  N  -1})~  f_{k  -  N}  -  \eta_{k  -  1} \eta_{k  -  2} \cdots \eta_{k  -  N  -  1} ~  f_{k  -  N  - 1}\\ 
 &= (1-\eta_{k  -  1})~f_k  +  \eta_{k  -  1} \eta_{k  -  2} \cdots \eta_{k  -  N  -  1} ~  (f_{k  -  N}  -  f_{k  -  N  - 1})\\ 
 &= (1-\eta_{k  -  1})~f_k  +  \xi_k ~  (f_{k  -  N}  -  f_{k  -  N  - 1}), 
\end{split} 
\end{equation*} 
where $\xi_k := \eta_{k  -  1} \eta_{k  -  2} \cdots \eta_{k  -  N  -  1}$. Thus, for $k \geq N$, this equation leads to
\begin{equation}\label{e.tk0} 
 \overline{T}_k = (1-\eta_{k  -  1})~f_k  +  \eta_{k  -  1}  \overline{T}_{k  -  1}   +\xi_k~   (f_{k  -  N}  -  f_{k  -  N  -1}),
\end{equation} 
which requires to save only $f_{k-N}$ and $f_{k-N-1}$ and needs three multiplications to be updated.The definition of $\xi_k$ implies  
\begin{equation}\label{e.xik} 
 \xi_k = \eta_{k  -  1} \eta_{k  -  2} \cdots \eta_{k  -  N  -  1} = \frac{\eta_{k  -  1}}{\eta_{k  -  N  -  2}} \eta_{k  -  2} \eta_{k  -  3} \cdots \eta_{k  -  N  -  2} = \frac{\eta_{k  -  1}}{\eta_{k  -  N  -  2}} \xi_{k-1} .
\end{equation} 
If $\xi_k$ is recursively updated by (\ref{e.xik}), then (\ref{e.tk}) and (\ref{e.tk0}) imply that the new nonmonotone term can be specified by 
\begin{equation}\label{e.tk1} 
 T_k  : = \left\{ 
\begin{array}{ll} 
 f_k  +  \eta_{k  -  1} ( \overline{T}_k  -  f_k) &~~ \mathrm{if}~ k < N;\\ 
 \max ~  \left\{\overline{T}_{k}, f_k \right\}    &~~ \mathrm{if}~ k \geq N,
\end{array} \right.
\end{equation} 
where the max term guaranteeing $T_k \geq f_k$.

As discussed in Section 1, a nonmonotone method performs better whenever it uses a stronger nonmonotone term far away from the optimizer and uses weaker term close to it. This fact motivates us to consider a new version of the derived nonmonotone term by employing $f_{l(k)}$ in the case $k < N$. More precisely, the second nonmonotone term is defined by   
\begin{equation}\label{e.tk2} 
 T_k = \left\{ 
  \begin{array}{ll} 
 f_{l(k)}                                      &~~ \mathrm{if}~ k < N;\\ 
 \max ~  \left\{\overline{T}_{k}, f_k \right\} &~~ \mathrm{if}~ k \geq N,
 \end{array} \right.
\end{equation} 
where $\xi_k$ is defined by (\ref{e.xik}). It is clear that the new term uses stronger term $f_{l(k)}$ defined by (\ref{e.max}) for the first $k < N$ iterations and then employs the relaxed convex term proposed above.

We now incorporate the two novel nonmonotone terms into Armijo's line search and outline the subsequent algorithm:\\\\ 

\begin{algorithm}[H] \label{a.nmls} 
 \DontPrintSemicolon 
 \KwIn{$x_0 \in \mathbb{R}^n$, $\rho \in (0, 1)$, $\sigma\in (0, \frac{1}{2})$,
 $s \in (0,1]$, $\eta_0 \in [0, 1)$, $\epsilon>0$, $N \geq 0$;} 
 \KwOut{$x_b$; $f_b$;} 
 \Begin{ 
     $k \leftarrow 0$;~ compute $f_0$;\; 
     $T_0 \leftarrow f_0$;\; 
     \While {$\|g_k\| \geq \epsilon$}{ 
         generate a descent direction $d_k$;\; 
         $\alpha \leftarrow s$;\; 
         $\hat{x}_k \leftarrow x_k+\alpha d_k$;\; 
         \While {$f(\hat{x}_k) > T_k  +  \sigma \alpha g_k^T d_k $}{  
             $\alpha \leftarrow \rho \alpha$;\; 
             $\hat{x}_k \leftarrow x_k+\alpha d_k$;\; 
        }
         $x_{k+1} \leftarrow \hat{x}_k$;\; 
         choose $\eta_{k+1}\in [0, 1)$;\; 
         update $\xi_{k  +  1}$ by (\ref{e.xik});\; 
         update $T_{k  +  1}$ by (\ref{e.tk1}) or (\ref{e.tk2});\; 
         $k \leftarrow k+1$; 
    }
    $x_b \leftarrow x_k, ~  f_b \leftarrow f_k$; 
 }
\caption{ {\bf NMLS} (novel nonmonotone Armijo-type line search algorithm)} 
\end{algorithm} 

\vspace{8mm} 
Notice that Algorithm 2 is a simple backtracking line search producing an $\epsilon$-solution $x_b$ satisfying $\|g_b\| < \epsilon$. However, the novel nonmonotone Armijo-type line search can be employed as a part of more sophisticated line searches like Wolfe, strong Wolfe and Goldstein line searches, see \cite{NocW}. 
 
Throughout the paper, we suppose that the following classical assumptions hold in order to verify the global convergence of Algorithm 2:\\\\ 
\textbf{(H1)} The upper level set $L(x_0)=\{x\in \mathbb{R}^n  ~|~  f(x)\leq f(x_0), ~  x_0\in \mathbb{R}^n\}$ is bounded.\\ 
\textbf{(H2)} The gradient of $f$ is Lipschitz continuous over an open convex set $C$ containing $L(x_0)$, i.e., there exists a 
positive constant $L$ such that 
\begin{equation*} \|g(x)-g(y)\| \leq L \|x-y\|~~~ \forall x,  y \in C.  \end{equation*} 
\textbf{(H3)} There exist constants $0 < c_1 <1 < c_2$ such that the direction $ d_k $ satisfies the next conditions\\ 
\begin{equation}\label{e.dir} g_k^Td_k \leq  - c_1 \|g_k\|^2,~~~ \|d_k\| \leq c_2 \|g_k\|,  \end{equation} 
for all $k \in \mathbb{N} \cup \{0\}$.

Note that the assumptions (H1) and (H2) are popular assumptions frequently used to establish the global convergence of the sequence $\{x_k\}$ generated by descent methods. There are several possible ways to determine the direction $d_k$ satisfying (\ref{e.dir}). For example, the gradient descent direction and some kind of spectral gradient direction and conjugate gradient directions are satisfying these conditions, see \cite{AhoAB1,ZhaH}. Newton and quasi-Newton directions can satisfy (\ref{e.dir}) with some more assumptions, see \cite{Fle,GriLL1}. In practice, if Algorithm 2 uses Newton-type or quasi-Newton directions, and one condition of (\ref{e.dir}) is not satisfied, then one of the gradient-based directions satisfied these conditions can be used in this iteration. In view of rounding error, sometimes the directions generated by Algorithms \ref{a.nmls} may not be descent so that if $g_k^T d_k > -10^{-14}$, one can take a advantage of the gradient descent direction instead.

We now verify the global convergence of the sequence gradient $\{x_{k}\}$ generated by Algorithm \ref{a.nmls}. Thanks to the similarity of the convergence proof of the current study and those reported in \cite{AhoAB,AmiAN}, we refer most of proofs to the related results of these literatures to avoid the repetition. 

\begin{lem}\label{lem1}  
Suppose that the sequence $\{x_{k}\}$ is generated by Algorithm \ref{a.nmls}, then we have 
\begin{equation}\label{e.ine}  
 f_{k}\leq T_{k}\leq f_{l(k)},
\end{equation} 
for all $k \in \mathbb{N} \cup \{0\}$.
\end{lem}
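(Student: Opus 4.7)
The plan is to proceed by induction on $k$, treating the two admissible choices (\ref{e.tk1}) and (\ref{e.tk2}) of the nonmonotone term in parallel. The base case $k=0$ is immediate since the algorithm initializes $T_0=\overline{T}_0=f_0$ and $f_{l(0)}=f_0$, so both inequalities reduce to equalities.

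For the inductive step I would split into three subcases. First, whenever $k\geq N$, both formulas agree and give $T_k=\max\{\overline{T}_k,f_k\}$, so $T_k\geq f_k$ is automatic. The upper bound $T_k\leq f_{l(k)}$ then follows from the convex-combination identity (\ref{e.conv}), which expresses $\overline{T}_k$ as a convex combination of the window $\mathcal{F}_k=\{f_{k-N},\ldots,f_k\}$; under the standard choice $m(k)=N$, each element of this window is dominated by $f_{l(k)}=\max_{0\leq j\leq N}f_{k-j}$, hence $\overline{T}_k\leq f_{l(k)}$ and $\max\{\overline{T}_k,f_k\}\leq f_{l(k)}$. Second, for $k<N$ with choice (\ref{e.tk2}), the chain is immediate since $T_k=f_{l(k)}\geq f_k$. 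Third, for $k<N$ with choice (\ref{e.tk1}), $T_k=(1-\eta_{k-1})f_k+\eta_{k-1}\overline{T}_k$ is itself a convex combination, and the same convex-combination identity applied to the shorter window $\{f_0,\ldots,f_k\}$ (with $m(k)=k$ for early iterates) yields $\overline{T}_k\leq f_{l(k)}$ and hence $T_k\leq f_{l(k)}$.

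The main obstacle is the lower bound $T_k\geq f_k$ in this third subcase, which reduces to showing the auxiliary inequality $\overline{T}_k\geq f_k$. For this I plan a separate induction whose hypothesis is $\overline{T}_{k-1}\geq f_{k-1}$. Because $T_{k-1}$ is a convex combination of $f_{k-1}$ and $\overline{T}_{k-1}$, this hypothesis forces $T_{k-1}\leq\overline{T}_{k-1}$. Coupling this with the Armijo acceptance test
\begin{equation*}
f_k=f(x_{k-1}+\alpha_{k-1}d_{k-1})\leq T_{k-1}+\sigma\alpha_{k-1}g_{k-1}^{T}d_{k-1}\leq T_{k-1},
\end{equation*}
where descent $g_{k-1}^{T}d_{k-1}<0$ is guaranteed by assumption (H3), I obtain $f_k\leq\overline{T}_{k-1}$. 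Substituting into the recursion $\overline{T}_k=(1-\eta_{k-1})f_k+\eta_{k-1}\overline{T}_{k-1}$ then closes the induction, and $\overline{T}_k\geq f_k$ propagates to $T_k\geq f_k$ via the convex-combination form of $T_k$.

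The subsidiary technical point where I expect care is the transition $k=N$, at which the defining expression for $\overline{T}_k$ switches from the all-history recursion to the sliding-window update (\ref{e.tk0}) driven by $\xi_k$ from (\ref{e.xik}). I would verify directly from these update rules that the convex-combination property over $\mathcal{F}_k$ is preserved uniformly across this boundary, so that the upper-bound and lower-bound arguments above apply without modification.
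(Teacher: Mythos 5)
Your proof is correct and follows essentially the same route as the paper: the upper bound comes from the convex-combination identity (\ref{e.conv}) together with the domination of each window element by $f_{l(k)}$, and the lower bound in the delicate case $k<N$ with (\ref{e.tk1}) comes from an induction driven by the Armijo acceptance test $f_k\leq T_{k-1}+\sigma\alpha_{k-1}g_{k-1}^{T}d_{k-1}\leq T_{k-1}\leq\overline{T}_{k-1}$. The only difference is that the paper outsources that last induction to Lemma~2.1 of \cite{AhoAB}, whereas you carry it out explicitly (and correctly).
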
  
\begin{proof} 
For $k \leq N$, we divide the proof into two cases: (i) $T_k$ defined by (\ref{e.tk1}); (ii) $T_k$ defined by (\ref{e.tk2}). For Case (i), Lemma 2.1 in \cite{AhoAB} implies $f_k \leq f_{l(k)}$ for  
 $i = 0, 1, \cdots k $, and since summation of multipliers in $T_k$ equal to one give the result. Case (ii) is deduced from(\ref{e.tk2}).

For $k \geq N$, if $T_k = f_k$, then the result is evident. Otherwise, (\ref{e.conv}), (\ref{e.tk1}) and the fact that $f_k \leq f_{l(k)}$ for $i = k - N + 1, \cdots, k$  imply 
 \[ 
 \begin{split} 
 f_k \leq T_k &= (1-\eta_{k  -  1})~f_k  +  \eta_{k  -  1}(1-\eta_{k  -  2}) ~  f_{k-1}  
+  \cdots  +  \eta_{k  -  1} \cdots \eta_{k  -  N} ~  f_{k  -  N}\\ 
 & \leq [(1-\eta_{k  -  1})  +  \eta_{k  -  1}(1-\eta_{k  -  2})  
+  \cdots  +  \eta_{k  -  1} \cdots \eta_{k  -  N}] f_{l(k)} = f_{l(k)},
 \end{split} 
 \] 
giving the result.
\end{proof} 

\begin{lem}\label{lem2}  
Suppose that (H1) and (H2) hold, and let the sequence $\{x_{k}\}$ be generated by Algorithm \ref{a.nmls}, then we have 
\begin{equation}\label{e:3}  
 \lim_{k\rightarrow \infty} T_k=\lim_{k\rightarrow \infty}f(x_k).
\end{equation} 
\end{lem}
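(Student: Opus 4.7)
The plan is to deduce the equality of both limits in (\ref{e:3}) from the sandwich established in Lemma~\ref{lem1}, namely $f_k \le T_k \le f_{l(k)}$, by first showing that $\{f_{l(k)}\}$ is convergent and then transferring that convergence first to $\{f(x_k)\}$ and finally to $\{T_k\}$ by the squeeze theorem.

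First, I would combine the Armijo-type acceptance rule $f(x_{k+1}) \le T_k + \sigma \alpha_k g_k^T d_k$ with the descent property $g_k^T d_k < 0$ and the upper bound $T_k \le f_{l(k)}$ from Lemma~\ref{lem1} to obtain $f_{k+1} \le f_{l(k)}$. From this inequality and the definition of $f_{l(k)}$ as the maximum over a sliding window of size at most $N+1$, a direct case analysis (as in \cite{GriLL1} or \cite{AhoAB}) shows that the sequence $\{f_{l(k)}\}$ is non-increasing for $k \ge N$. Assumption (H1) forces the iterates to remain in the bounded level set $L(x_0)$, so continuity of $f$ gives a lower bound on $\{f_{l(k)}\}$. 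Consequently, $\{f_{l(k)}\}$ converges to some limit $L \in \mathbb{R}$.

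Second, I would establish by induction on $j \ge 0$ that $\lim_{k\to\infty} f_{l(k)-j} = L$. The base case $j=0$ is just the convergence above. For the inductive step, the Armijo condition written at iteration $l(k)-j-1$, together with the Lipschitz hypothesis (H2), lets me bound $|f_{l(k)-j}-f_{l(k)-j-1}|$ by a quantity that tends to zero as $k\to\infty$, following the line of argument used in \cite{AhoAB} and \cite{AmiAN}. Because the offset $k-l(k)$ is uniformly bounded by $N$, a finite chain of such inductive steps then forces $\lim_{k\to\infty} f(x_k) = L$.

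Finally, having shown $\lim f(x_k) = \lim f_{l(k)} = L$, the sandwich $f_k \le T_k \le f_{l(k)}$ from Lemma~\ref{lem1} together with the squeeze theorem immediately yields $\lim T_k = L$, which is precisely (\ref{e:3}). The main obstacle is the inductive propagation in the middle paragraph: pushing convergence of $f_{l(k)}$ through the sliding window to every index in a fixed-size neighbourhood of $k$ is where the Lipschitz hypothesis (H2) is genuinely used, and it is exactly this portion that the cited references \cite{AhoAB,AmiAN} develop in detail, so my proof would mirror their construction closely and invoke them to avoid repetition, in line with the authors' stated strategy.
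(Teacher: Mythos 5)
Your proposal is correct and follows essentially the same route as the paper: the authors' proof is simply the sandwich $f_k \le T_k \le f_{l(k)}$ from Lemma~\ref{lem1} combined with Lemma~2 of \cite{AmiAN}, whose content (monotone convergence of $\{f_{l(k)}\}$ and the Grippo--Lampariello--Lucidi-type propagation of that limit down the sliding window to $f_k$) is exactly what you reconstruct in your first two paragraphs before applying the squeeze theorem. The only caveat is that the propagation step, in your write-up as in the cited reference, tacitly uses bounds on $\alpha_k\|d_k\|$ of the kind supplied by (H3) even though the lemma lists only (H1)--(H2), but this looseness is inherited from the paper itself rather than introduced by you.
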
 
\begin{proof} 
From (\ref{e.ine}) and Lemma 2 of \cite{AmiAN}, the result is valid.
\end{proof} 

\begin{lem}\label{lem3}  
Suppose that the sequence $\{x_{k}\}$ be generated by Algorithm \ref{a.nmls}. Then, the new nonmonotone line search is well-defined. Moreover, if $ \tilde{\alpha} $ and $ \alpha $ are step-sizes generated by monotone Armijo's rule (\ref{e.armijo}) and the nonmonotone line search of Algorithm \ref{a.nmls}, respectively, then $ \tilde{\alpha} \leq \alpha $.
\end{lem}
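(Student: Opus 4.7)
The plan hinges on the single observation, already established in Lemma \ref{lem1}, that $T_k\ge f_k$ for every $k$. This immediately makes the nonmonotone acceptance test
\[
 f(x_k+\alpha d_k)\le T_k+\sigma\alpha g_k^T d_k
\]
no stricter than the standard Armijo test, and both claims of the lemma then reduce to classical arguments applied to the monotone rule.

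For well-definedness I would first argue, using assumptions (H2) and (H3), that the monotone Armijo condition is satisfied for all sufficiently small $\alpha>0$. The standard route is the descent lemma: from (H2),
\[
 f(x_k+\alpha d_k)\le f_k+\alpha g_k^T d_k+\tfrac{L}{2}\alpha^2\|d_k\|^2,
\]
so a sufficient condition for the monotone Armijo inequality is $\tfrac{L}{2}\alpha\|d_k\|^2\le (1-\sigma)(-g_k^T d_k)$. Using (H3) (in particular $g_k^T d_k\le -c_1\|g_k\|^2$ and $\|d_k\|\le c_2\|g_k\|$) this holds whenever $\alpha\le \tfrac{2(1-\sigma)c_1}{Lc_2^2}$, which is a positive lower bound independent of $k$. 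Hence the backtracking sequence $s,\rho s,\rho^2 s,\dots$ eventually produces an $\alpha$ satisfying the monotone condition; combined with $T_k\ge f_k$ this same $\alpha$ satisfies the nonmonotone condition, so Algorithm~\ref{a.nmls} terminates after finitely many reductions.

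For the comparison $\tilde\alpha\le\alpha$ I would observe that the monotone and nonmonotone backtracking procedures test exactly the same trial values $\alpha_j=\rho^j s$ in the same order. Let $\tilde j$ and $j$ be the first indices for which the monotone and nonmonotone tests, respectively, succeed. By Lemma \ref{lem1},
\[
 f(x_k+\alpha_{\tilde j}d_k)\le f_k+\sigma\alpha_{\tilde j}g_k^T d_k\le T_k+\sigma\alpha_{\tilde j}g_k^T d_k,
\]
so $\alpha_{\tilde j}$ satisfies the nonmonotone criterion as well, forcing $j\le\tilde j$. Since $\rho\in(0,1)$, we get $\alpha=\alpha_j\ge\alpha_{\tilde j}=\tilde\alpha$, as required.

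The only subtlety I anticipate is making sure the lower bound on the admissible step-size is derived cleanly from (H2) and (H3); once this quantitative descent estimate is in hand, the rest is a one-line comparison using Lemma~\ref{lem1}, and no additional machinery beyond what is already stated in the excerpt is needed.
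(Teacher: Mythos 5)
Your proof is correct and follows essentially the same route as the paper, which simply invokes the key inequality $f_k \le T_k \le f_{l(k)}$ from Lemma~\ref{lem1} and defers the remaining details to Lemma~2.3 of \cite{AhoAB}; that cited argument is precisely the descent-lemma bound on admissible step-sizes plus the observation that the nonmonotone test is weaker than the monotone one. Your write-up just makes explicit what the paper leaves to the reference.
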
  
\begin{proof} 
Using (\ref{e.ine}) and similar to Lemma 2.3 of \cite{AhoAB}, the results hold. 
\end{proof} 

\begin{thm}\label{TH1}  
Suppose that (H1), (H2) and (H3) hold, and let the sequence $\{x_k\}$ is 
generated by Algorithm \ref{a.nmls}. Then, we have 
\begin{equation}\label{e.gloc} \lim_{k\rightarrow\infty}\|g_{k}\|= 0. \end{equation} 
Furthermore, there is not any limit point of the sequence $\{x_k\}$ 
that be a local maximum of $f(x)$.
\end{thm}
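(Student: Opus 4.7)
The plan is to mimic the classical convergence proof for nonmonotone Armijo-type schemes (as in Grippo--Lampariello--Lucidi and the later refinements in \cite{AhoAB,AmiAN}), using the key ingredients already established: Lemma \ref{lem1} ($f_k \le T_k \le f_{l(k)}$), Lemma \ref{lem2} ($\lim T_k = \lim f_k$), and Lemma \ref{lem3} (well-definedness of the line search). The nonmonotone Armijo inequality produced by Algorithm \ref{a.nmls} is $f_{k+1} \le T_k + \sigma \alpha_k g_k^T d_k$. Rearranging and using $g_k^T d_k \le -c_1\|g_k\|^2 \le 0$ from (H3), one gets $T_k - f_{k+1} \ge \sigma \alpha_k c_1 \|g_k\|^2 \ge 0$. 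Since $\{T_k\}$ and $\{f_k\}$ share the same limit by Lemma \ref{lem2}, the left-hand side vanishes, hence $\alpha_k \|g_k\|^2 \to 0$ and more generally $\alpha_k g_k^T d_k \to 0$.

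From here I would argue by contradiction: suppose $\|g_k\| \not\to 0$, so there exist $\varepsilon > 0$ and a subsequence $\{k_j\}$ with $\|g_{k_j}\| \ge \varepsilon$. On this subsequence, $\alpha_{k_j} g_{k_j}^T d_{k_j} \to 0$ combined with (H3) forces $\alpha_{k_j} \to 0$. This is the step where I expect the real work: I need to exploit the failure of the Armijo test at the penultimate trial step. Because $\alpha_{k_j} < s$, the backtracking must have rejected $\alpha_{k_j}/\rho$, i.e.
\begin{equation*}
f\bigl(x_{k_j} + (\alpha_{k_j}/\rho)\, d_{k_j}\bigr) \;>\; T_{k_j} + \sigma (\alpha_{k_j}/\rho)\, g_{k_j}^T d_{k_j} \;\ge\; f_{k_j} + \sigma (\alpha_{k_j}/\rho)\, g_{k_j}^T d_{k_j},
\end{equation*}
where the second inequality crucially uses $T_{k_j} \ge f_{k_j}$ from Lemma \ref{lem1} to peel off the nonmonotone term and recover a purely monotone-style inequality. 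Applying the mean-value theorem on the left and the Lipschitz bound from (H2), standard manipulation yields
\begin{equation*}
(\sigma - 1)\, g_{k_j}^T d_{k_j} \;<\; (L/\rho)\, \alpha_{k_j} \|d_{k_j}\|^2.
\end{equation*}
Using (H3) this becomes $(1-\sigma) c_1 \|g_{k_j}\|^2 < (L c_2^2/\rho)\, \alpha_{k_j} \|g_{k_j}\|^2$, so $\alpha_{k_j}$ is bounded below by a positive constant, contradicting $\alpha_{k_j} \to 0$. Hence $\|g_k\| \to 0$.

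For the second assertion, suppose some subsequence $x_{k_j} \to \bar x$ with $\bar x$ a local maximum of $f$. By continuity, $g(\bar x) = 0$ is consistent with the first part, so the contradiction must be extracted from the geometry. I would use that the Armijo step guarantees $f(x_{k_j+1}) \le T_{k_j} + \sigma \alpha_{k_j} g_{k_j}^T d_{k_j} \le T_{k_j}$, and since $T_{k_j} \to f(\bar x)$ by Lemma \ref{lem2} and continuity, a tail of iterates lies in any neighbourhood $U$ of $\bar x$ on which $f(x) \le f(\bar x)$ with strict inequality except at $\bar x$; combining with the fact that each iterate strictly decreases $f$ relative to $T_k$ (in the sense above), the iterates cannot remain in $U$ while also converging to $\bar x$, a contradiction.

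The main obstacle is the backtracking-failure step: the nonmonotone term $T_k$ enters the rejection inequality on the right-hand side, and without Lemma \ref{lem1} one cannot replace it by $f_k$ to mirror the monotone argument. Once that replacement is justified, the remainder is routine Taylor/Lipschitz bookkeeping together with (H3) to convert the failure of Armijo at $\alpha_{k_j}/\rho$ into a positive lower bound on $\alpha_{k_j}$.
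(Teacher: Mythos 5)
Your argument for the first claim, $\lim_{k}\|g_k\|=0$, is correct and is essentially the argument the paper delegates by citation to Lemma 3 and Theorem 1 of \cite{AmiAN}: the chain $0\le\sigma c_1\alpha_k\|g_k\|^2\le T_k-f_{k+1}\to 0$ via Lemmas \ref{lem1} and \ref{lem2}, followed by a contradiction on a subsequence with $\|g_{k_j}\|\ge\varepsilon$, where the rejected trial step $\alpha_{k_j}/\rho$ together with $T_{k_j}\ge f_{k_j}$ and the mean-value/Lipschitz estimate from (H2) yields the uniform lower bound $\alpha_{k_j}>\rho(1-\sigma)c_1/(Lc_2^2)$. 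You correctly isolate the one place where the nonmonotonicity matters, namely that Lemma \ref{lem1} is what downgrades the rejection inequality $f(x_{k_j}+(\alpha_{k_j}/\rho)d_{k_j})>T_{k_j}+\cdots$ to its monotone counterpart. The only technicality glossed over (as the paper also does) is that the trial points must remain in the set $C$ of (H2).

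The second claim is where your proposal has a genuine gap. Your sketch assumes that a \emph{tail} of iterates lies in a neighbourhood $U$ of $\bar x$, but $\bar x$ is only a limit point, so only a subsequence need enter $U$; it assumes $f(x)<f(\bar x)$ on $U\setminus\{\bar x\}$, but a local maximum need not be strict; and the concluding contradiction (``the iterates cannot remain in $U$ while also converging to $\bar x$'') is asserted rather than derived -- nothing prevents iterates from converging to a point. The argument in \cite{AmiAN} (going back to \cite{GriLL1}) runs differently: one first shows $f_{l(k)}>f^*:=\lim_k f_k$ strictly for every $k$, because if $f_{l(k_0)}=f^*$ then every accepted step with $i\ge k_0$ satisfies $f_{i+1}\le T_i+\sigma\alpha_i g_i^Td_i<T_i\le f_{l(i)}=f^*$, so once the max-window lies entirely beyond $k_0$ the value $f_{l(k)}$ drops below $f^*$, contradicting its monotone convergence to $f^*$ from above. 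Hence each window $[k-N,k]$ contains an index $i_k$ with $f_{i_k}>f^*$; since $\|x_{j+1}-x_j\|\le s\,c_2\|g_j\|\to 0$ by (H3) and the first part, the points $x_{i_{k_j}}$ converge to $\bar x$ along the subsequence $x_{k_j}\to\bar x$ and carry $f$-values strictly above $f(\bar x)=f^*$, which is what contradicts local maximality. You would need to supply this (or an equivalent) argument; the ingredients you list do not by themselves close the case.
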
 
\begin{proof} 
By similar proofs of Lemma 3 and Theorem 1 of \cite{AmiAN}, the results are valid.
\end{proof} 

Theorem \ref{TH1} suggests that the sequence $\{x_{k}\}$ generated by Algorithm \ref{a.nmls} is globally convergent to a first-order stationary point of (\ref{e.func}). The $R$-linearly convergence of the sequence $\{x_{k}\}$ for strongly convex objective function can be proved the same as Lemma 4.1 and Theorem 4.2 in \cite{AhoAB}. Furthermore, if the algorithm exploits quasi-Newton or Newton directions, the superlinear or quadratic convergence rate also can be established similar to Theorem 4.3 and Theorem 4.4 of \cite{AhoAB} by a slight modification.  

\section{Numerical experiments and comparisons}  
This section reports some numerical results of our experiments with Algorithm 2 using the two novel nonmonotone terms to verify and assess their efficiency for solving unconstrained optimization problems. In our experiments, we first consider modified versions of damped Newton's method, Algorithm 2 with Newton's direction, and the BFGS method, Algorithm 2 with the BFGS direction, using various nonmonotone terms for solving some small-scale test problems. Afterwards, versions of Algorithm 2 equipped with the novel nonmonotone terms and LBFGS and Barzilai-Borwein directions are performed on a large set of test problems, and their performance are compared to some state-of-the-art algorithms.  

In the experiment with damped Newton's method and the BFGS method, we only consider 18 unconstrained test problems from {\sc Mor\'{e}} et al. in \cite{MorGH}, while in implementation of Algorithm 2 with LBFGS and Barzilai-Borwein directions a set of 94 standard test functions from {\sc Andrei} in \cite{And} and {\sc Mor\'{e}} et al. in \cite{MorGH} is used. In our comparisons, we employ the following algorithms:
{\renewcommand{\labelitemi}{$\bullet$} 
\begin{itemize} 
 \item NMLS-G: the nonmonotone line search of {\sc Grippo} et al.  \cite{GriLL1}; 
 \item NMLS-H: the nonmonotone line search of {\sc Zhang \& Hager} \cite{ZhaH}; 
 \item NMLS-N: the nonmonotone line search of {\sc Amini} et al.  \cite{AmiAN}; 
 \item NMLS-M: the nonmonotone line search of {\sc Ahookhosh} et al.  \cite{AhoAB1};
 \item NMLS-1: a version of Algorithm 2 using the nonmonotone term (\ref{e.tk2});
 \item NMLS-2: a version of Algorithm 2 using the nonmonotone term (\ref{e.tk1});
\end{itemize} 
All of these codes are written in MATLAB using the same subroutine, and they are tested on 2Hz core i5 processor laptop with 4GB of RAM with double precision format. The initial points are standard ones reported in \cite{And} and \cite{MorGH}. All the algorithms use the parameters $\rho = 0.5$ and $\sigma = 0.01$. For NMLS-N, NMLS-G, NMLS-1 and NMLS-2, we set $N = 10$. As discussed in \cite{ZhaH}, NMLS-H uses $\eta_k = 0.85$.  On the basis of our experiments, we update the parameter $\eta_k$ adaptively by 
\begin{equation}\label{e.eta} 
  \eta_{k}=\left\{ 
  \begin{array}{ll} 
    \eta_0/2   \vspace{4mm}    &  ~~ \mathrm{if}~ k=1; \\ 
    (\eta_{k-1}+\eta_{k-2})/2  &  ~~ \mathrm{if}~ k\geq 2,
  \end{array} \right.
\end{equation} 
for NMLS-N, NMLS-M, NMLS-1 and NMLS-2, where the parameter $\eta_0$ will be tuned to get a better performance. In our experiments, the algorithms are stopped whenever the total number of iterates exceeds $\mathsf{maxiter} = 50000$ or     
\begin{equation} \| g_k \| < \epsilon \end{equation} 
holds with the accuracy parameter $\epsilon = 10^{-5}$. We further declare an algorithm "failed" if the maximum number of iterations is reached.

To compare the results appropriately, we use the performance profiles of {\sc Dolan \& Mor\'{e}} in \cite{R7}, where the measures of performance are the number of iterations ($N_i$), function evaluations ($N_f$) and gradient evaluations ($N_g$). It is clear that in the considered algorithms the number of iterations and gradient evaluations are the same, so we only consider the performance of gradients. It is believed that computing a gradient is as costly as computing three function values, i.e., we further consider the measure $N_f+3 N_g$. In details, the performance of each code is measured by considering the ratio of its computational outcome versus the best numerical outcome of all codes. This profile offers a tool for comparing the performance of iterative processes in a statistical structure. Let $\mathcal{S}$ be a set of all algorithms and $\mathcal{P}$ be a set of test problems. For each problem $p$ and solver $s$,  $t_{p,s}$ is the computational outcome regarding to the performance index, which is used in defining the next performance ratio 
\begin{equation}\label{e.perf}
r_{p,s}=\frac{t_{p,s}}{\min \{ t_{p,s}: s\in \mathcal{S}\}}.
\end{equation} 
If an algorithm $s$ is failed to solve a problem $p$, the procedure sets $r_{p,s}=r_{failed}$, where $r_{failed}$ should be strictly larger than any performance ratio (\ref{e.perf}). For any factor $\tau$, the overall performance of a algorithm $s$ is given by 
\begin{equation*} 
\rho_s(\tau)=\frac{1}{n_p} \textrm{size}\{p \in \mathcal{P}: r_{p,s} \leq \tau\}.  
\end{equation*} 
In fact $\rho_s(\tau)$ is the probability that a performance ratio $r_{p,s}$ of the algorithm $s\in \mathcal{S}$ is within a factor $\tau \in \mathbf{R}^n$ of the best possible ratio. The function 
$\rho_s(\tau)$ is a distribution function for the performance ratio. In particular, $\rho_s(1)$ gives the probability that an algorithm $s$ wins over all other considered algorithms, and $\lim_{\tau\rightarrow 
r_{failed}}\rho_s(\tau)$ gives the probability of that algorithm $s$ 
solve all considered problems. Therefore, this performance profile can be considered as a measure of efficiency among all considered algorithms. In Figures 1-4, the x-axis shows the number $\tau$ while 
the y-axis inhibits $P(r_{p,s} \leq \tau: 1 \leq s \leq n_s)$.

\subsection{Experiments with damped Newton and BFGS} 
In this section, we report numerical results of solving the problem (\ref{e.func}) by NMLS-G, NMLS-H, NMLS-N, NMLS-M, NMLS-1 and NMLS-2 using damped Newton and BFGS directions and compare their performance. Since both damped Newton and BFGS methods require to solve a linear system of equations in each iteration, it is expected to solve large-scale problems with them. Thus we only consider 18 small-scale test problems from {\sc Mor\'{e}} \cite{MorGH} with their standard initial points. For NMLS-1 and NMLS-2, we use $\eta_0 = 0.75$. The results for damped Newton's method and the BFGS method are summarized in Tables 1 and 2, respectively.

Table 1 and 2 show that the results are comparable for the considered algorithms, however NMLS-1 and NMLS-2 perform slightly better. To see the results of implementations in details, we illustrate the results with performance profile in Figure 3 with $N_g$, $N_f$ and $N_f + 3 N_g$ as measures of performance. Subfigures (a), (c) and (e) of Figure 3 show the results of damped Newton's method, while subfigures (b), (d) and (f) of Figure 3 demonstrate the results of the BFGS method.

\begin{figure}\label{com6} 
 \centering 
 \subfloat[][$N_i$ and $N_g$ performance profile]{\includegraphics[width=7.3cm]{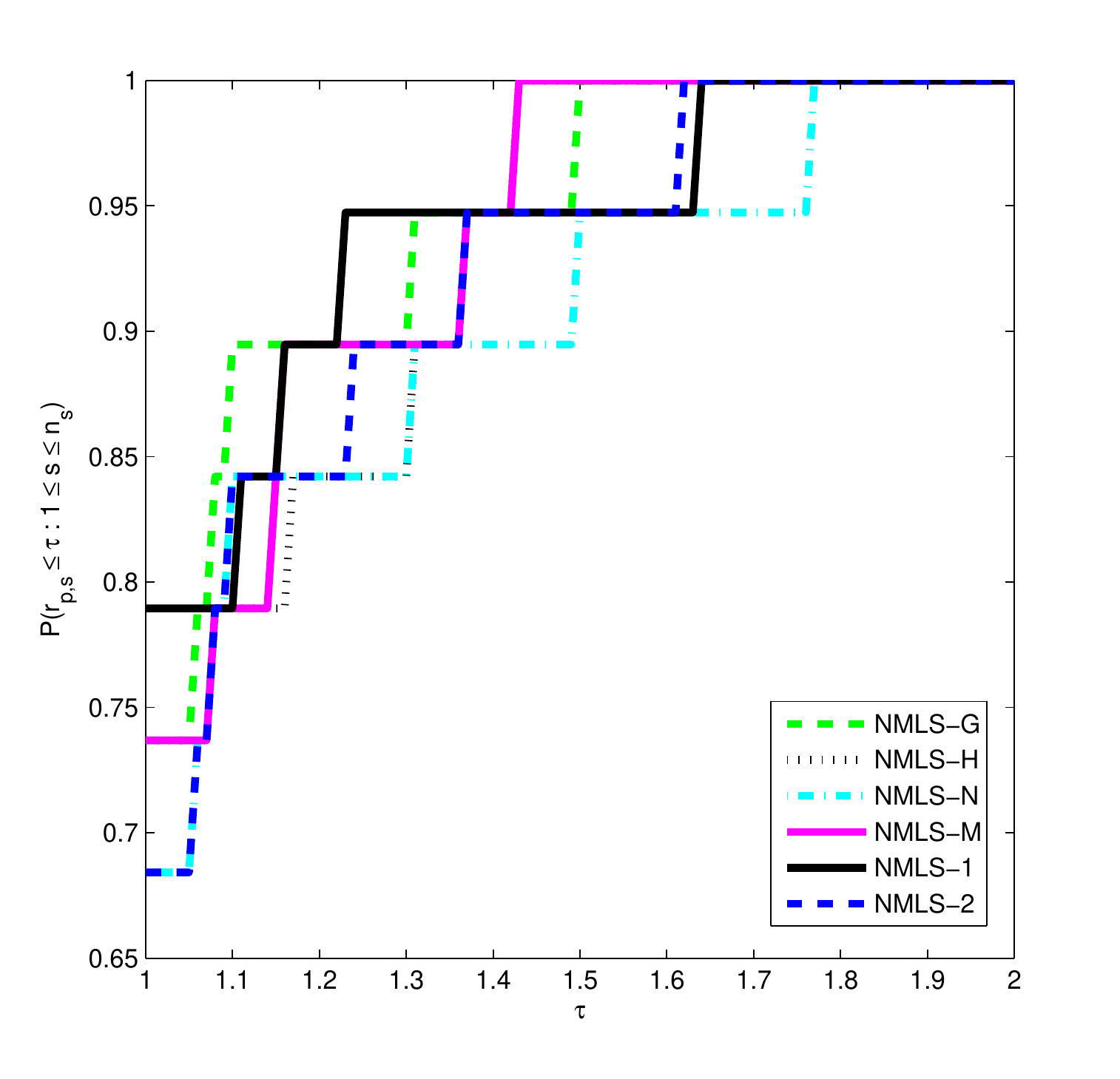}}%
 \qquad 
 \subfloat[][$N_i$ and $N_g$ performance profile]{\includegraphics[width=7.3cm]{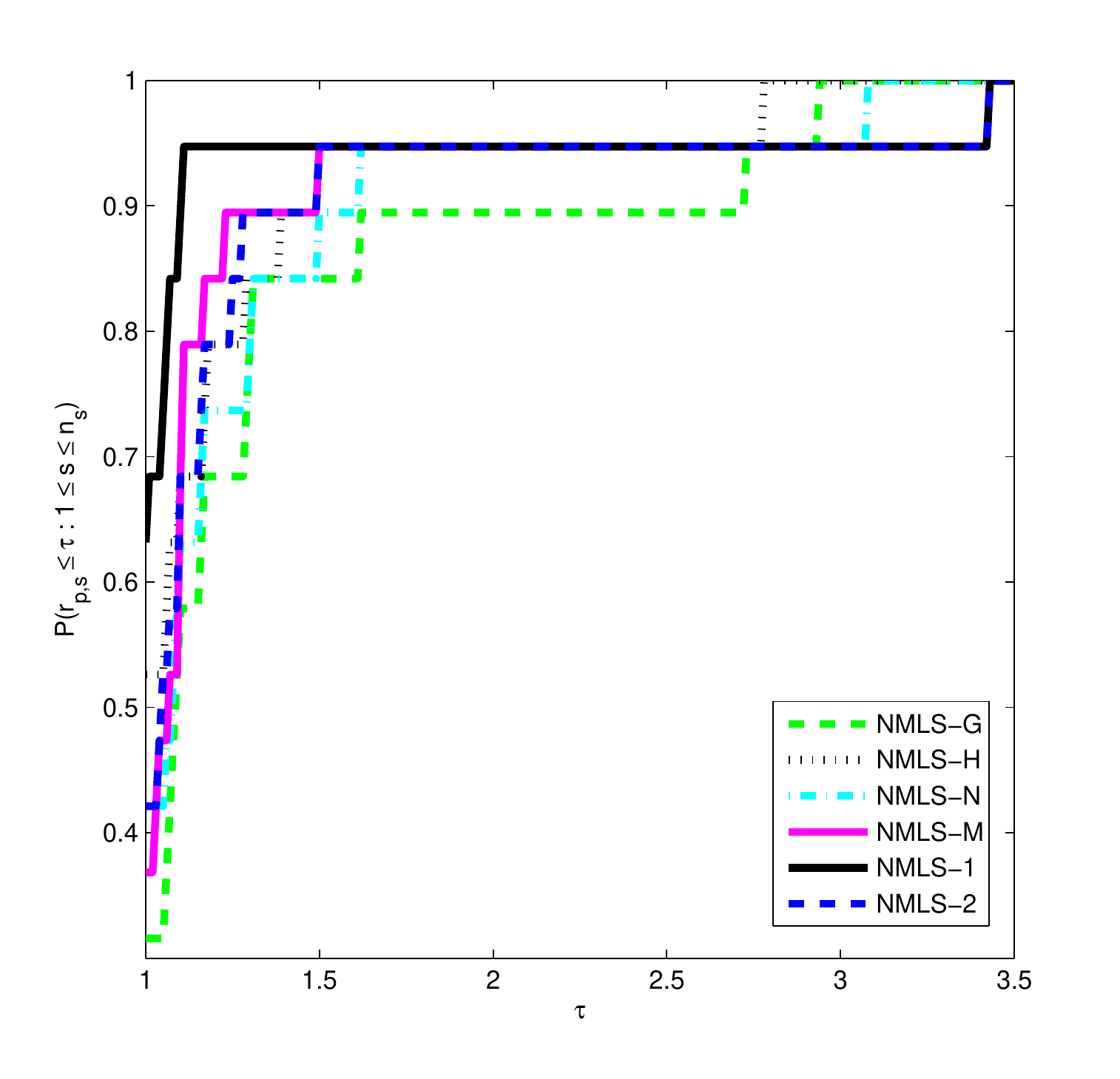}} 
 \qquad 
 \subfloat[][$N_f$ performance profile]{\includegraphics[width=7.3cm]{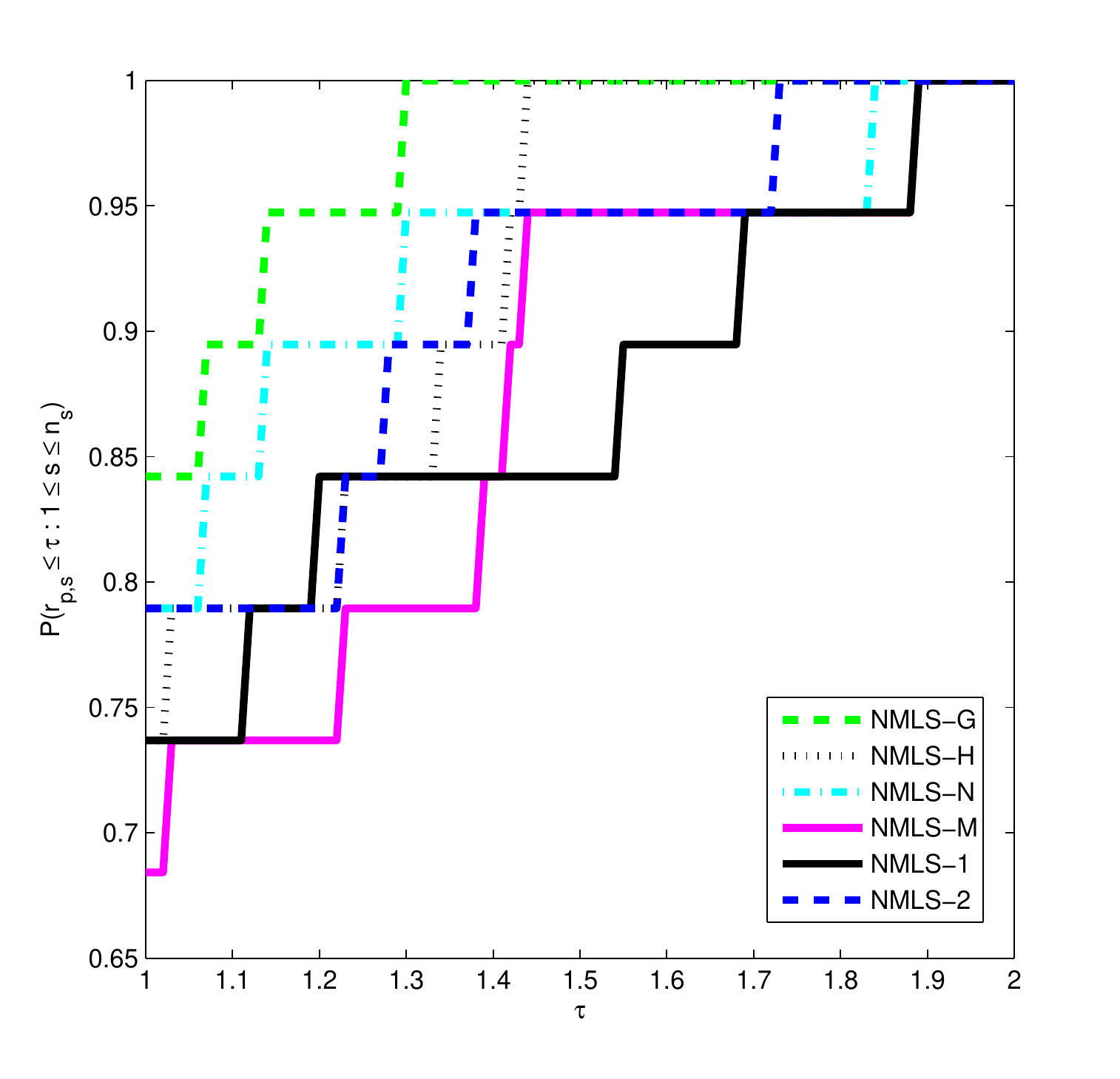}} 
 \qquad 
 \subfloat[][$N_f$ performance profile]{\includegraphics[width=7.3cm]{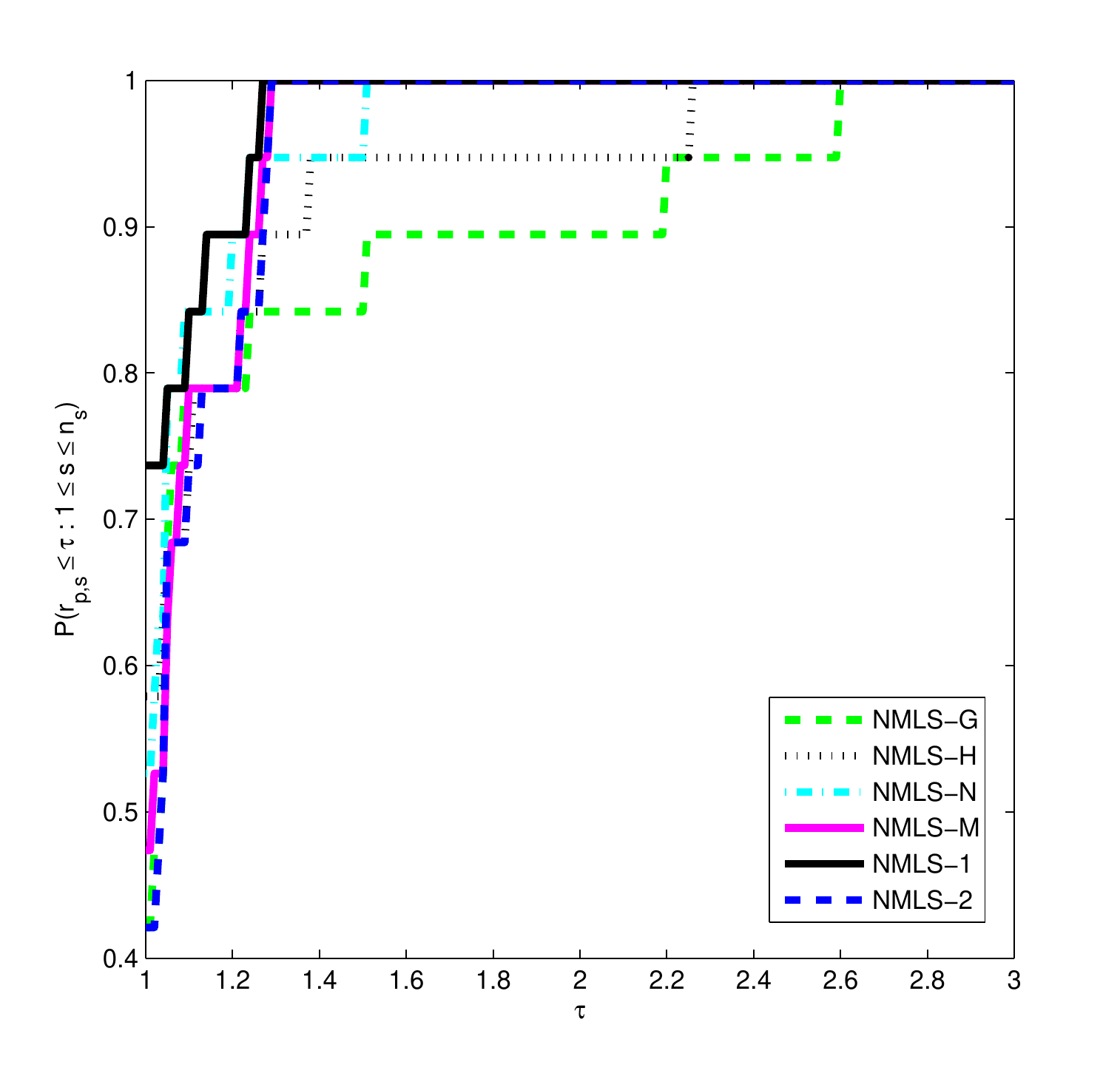}} 
 \qquad 
 \subfloat[][$N_f  +  3 N_g$ performance profile]{\includegraphics[width=7.3cm]{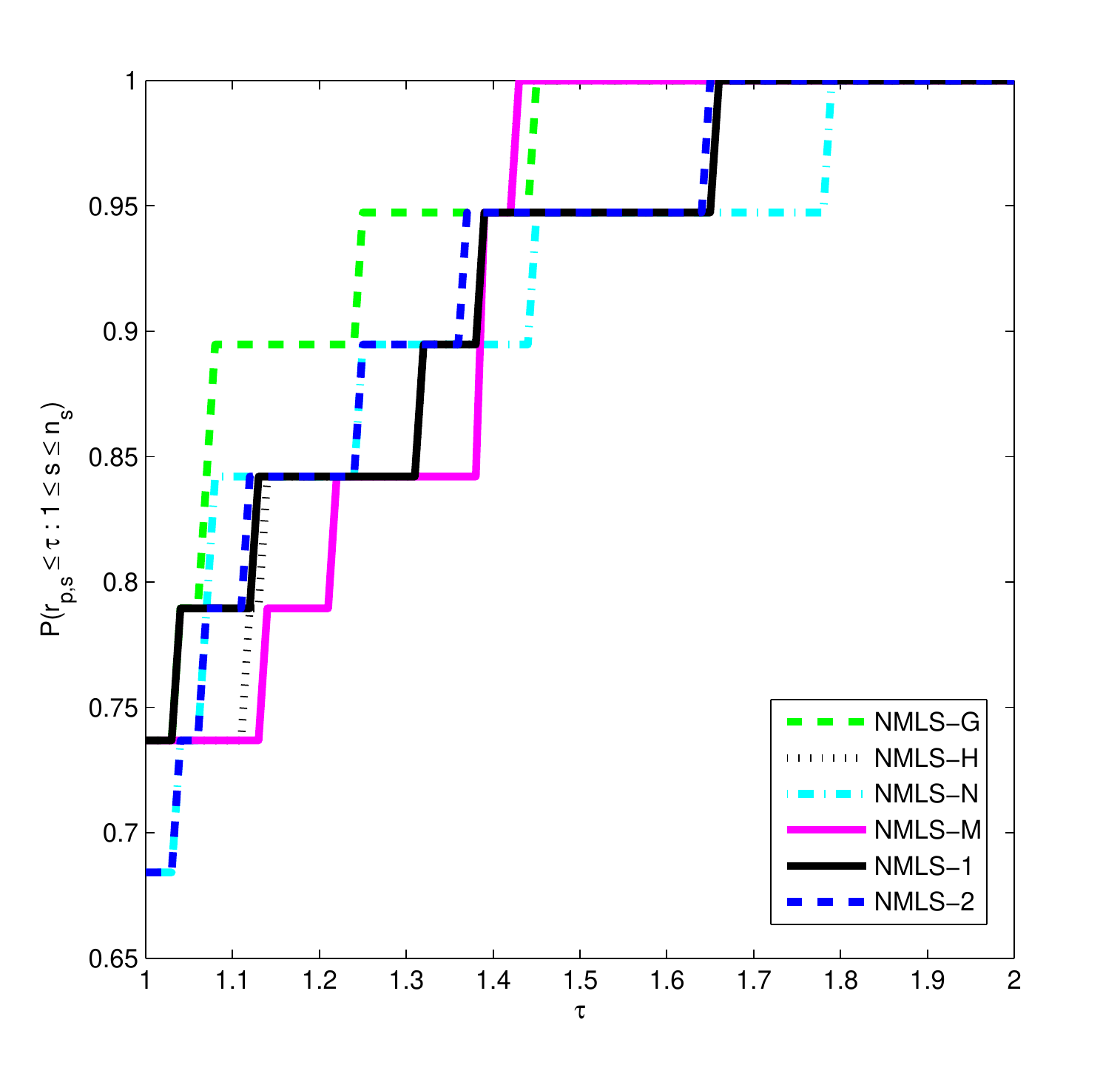}} 
 \qquad 
 \subfloat[][$N_f$ performance profile]{\includegraphics[width=7.3cm]{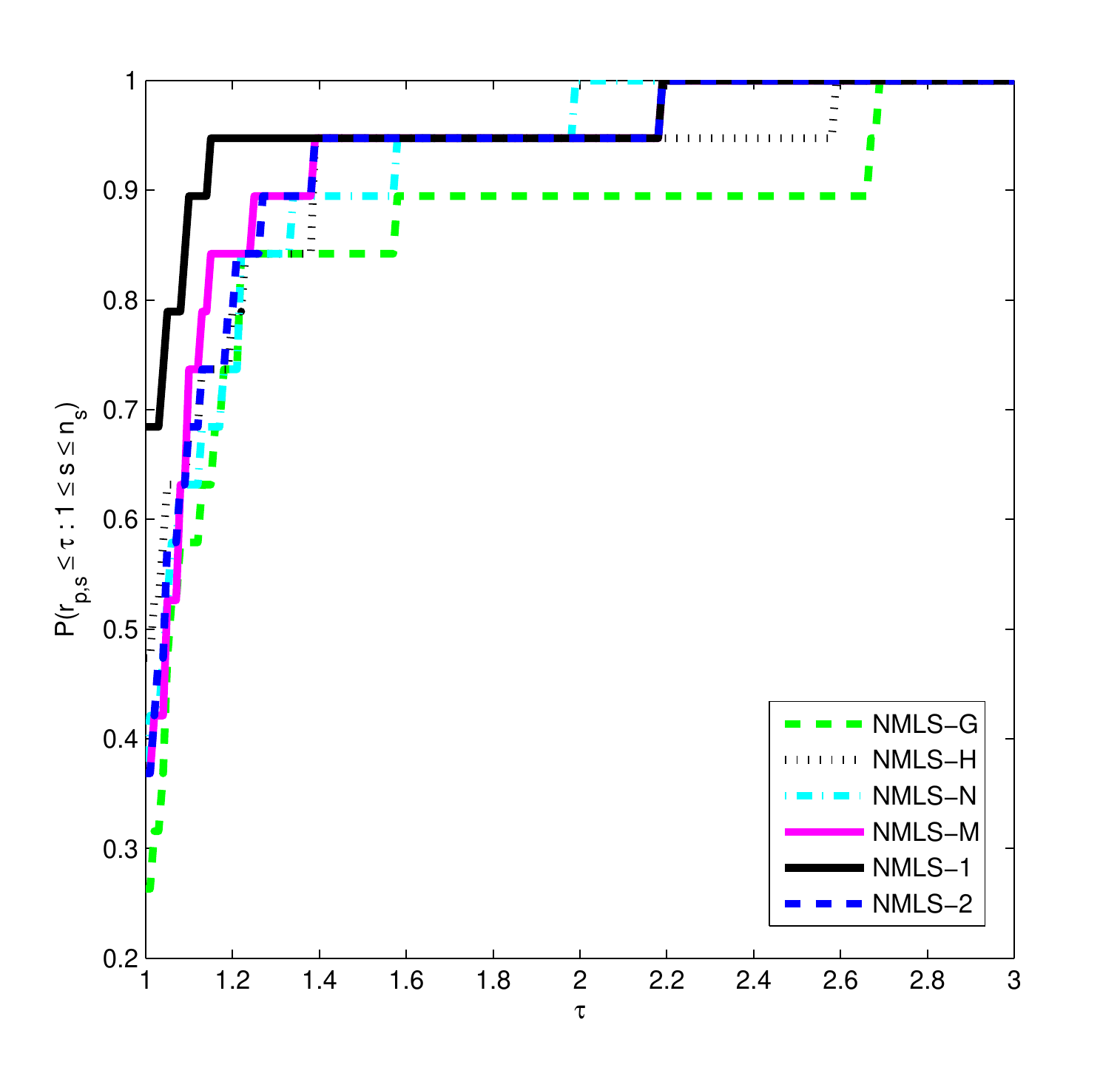}} 
 \caption{Performance profiles of all considered algorithms with the performance measures: (a) and (b) for the number of iterations ($N_i$) or gradient evaluations ($N_g$); (c) and (d) for the number of function evaluations ($N_f$); (e) and (f) for the hybrid measure $N_f  +  3 N_g$.} 
\end{figure} 

 \subsection{Experiments with LBFGS} 
In the recent decades, the interest for solving optimization problems with large number of variables is remarkably increased thanks to the dramatic emerge of big data in science and technology. This section devotes to an experiment with the considered algorithms with LBFGS which is the limited memory version of the BFGS scheme, and that is much more appropriate for solving large problems, see \cite{LiuN,Noc}. 

The LBFGS scheme calculates a search direction by $d_k = -  H_k g_k$,  where $H_k$ is an approximate of inverse Hessian determined by 
\begin{equation*}\begin{split} 
 H_{k+1} & = (V_k^T \cdots V_{k-m}^T) H_0 (V_{k-m} \cdots V_k)\\ 
 & +  \rho_{k-m} (V_k^T \cdots V_{k-m+1}^T) s_{k-m}s_{k-m}^T (V_{k-m+1} \cdots V_k)\\ 
 & +  \rho_{k-m+1} (V_k^T \cdots V_{k-m+2}^T) s_{k-m+1}s_{k-m+1}^T (V_{k-m+2} \cdots V_k)\\ 
 & \vdots \\ 
 & +  \rho_k s_k s_k^T,
\end{split}\end{equation*} 
in which $\rho_k=1/y_k^Ts_k$ and $V_k=I-\rho_k y_k s_k^T$. The scheme starts from a symmetric positive definite initial matrix $H_0$ and sets ${m}={\min}\{k,10\}$. Indeed, it does not need to save the previous approximate matrix, instead it employs only small number of former information to construct the new search direction $d_k$. This causes that the method needs much less memory compared with the original BFGS method making it suitable for solving large-scale problems. The LBFGS code is publicly available from \cite{Url1}, however, we rewrite it in MATLAB.

It is believed that nonmonotone algorithms perform better when they employ a stronger nonmonotone term far away from the optimizer and a weaker term close to it. Hence, to get the best performance of the proposed algorithms, we first conduct some test to find a better starting parameter for $\eta_0$ in the adaptive process (\ref{e.eta}). To this end, for both algorithms NMLS-1 and NMLS-2, we consider cases that the algorithms start from $\eta_0 = 0.65$, $\eta_0 = 0.75$, $\eta_0 = 0.85$ and $\eta_0 = 0.95$. The corresponding versions of algorithms NMLS-1 and MNLS-2 are denoted by NMLS-1-0.65, NMLS-1-0.75, NMLS-1-0.85, NMLS-1-0.95, NMLS-2-0.65, NMLS-2-0.75, NMLS-2-0.85 and NMLS-2-0.95, respectively. The results of this test are summarized in Figure 4.

\begin{figure}\label{com_par} 
 \centering 
 \subfloat[][$N_i$ and $N_g$ performance profile (NMLS-1)]{\includegraphics[width=7.5cm]{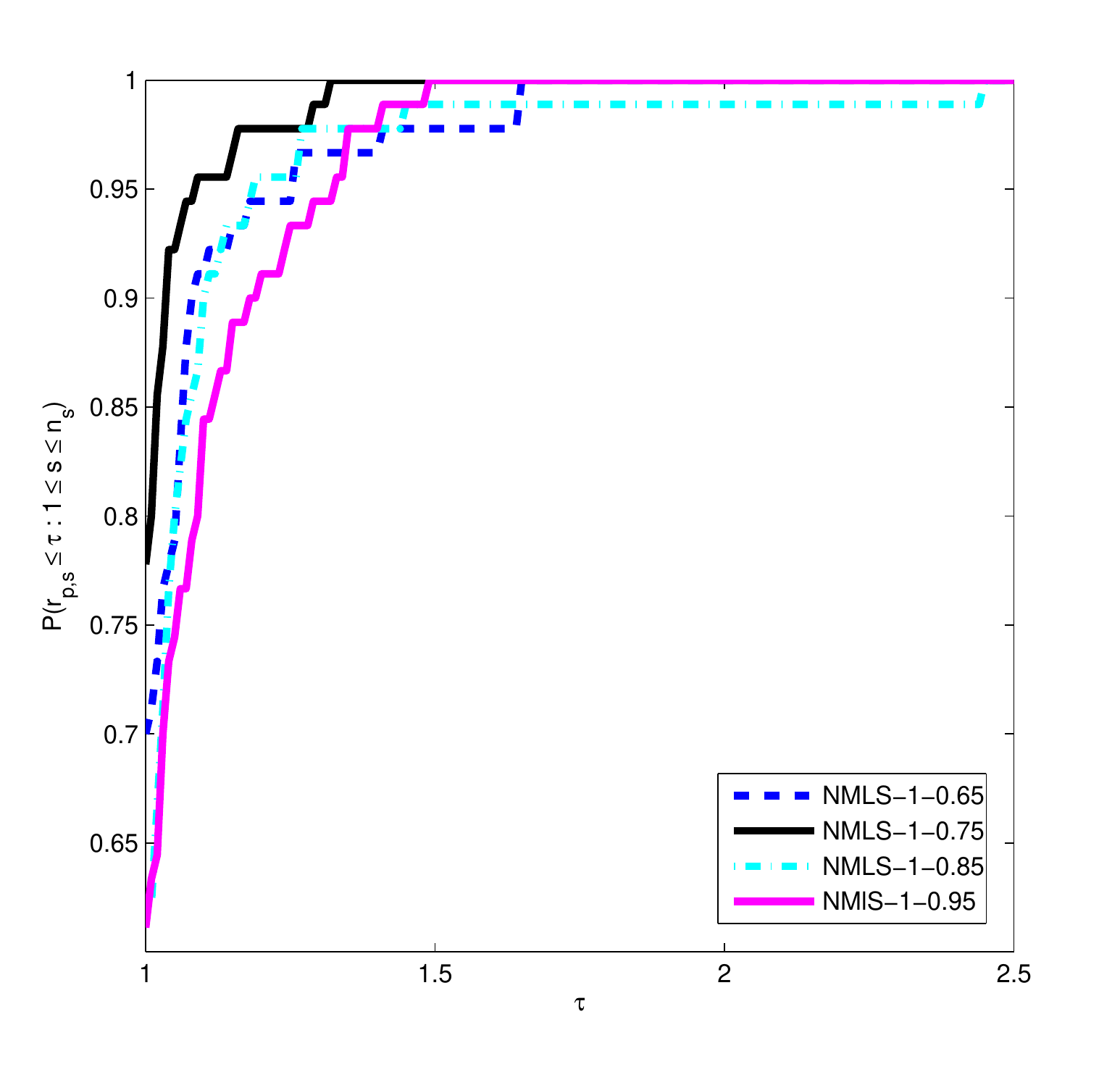}}%
 \qquad 
 \subfloat[][$N_i$ and $N_g$ performance profile (NMLS-2)]{\includegraphics[width=7.5cm]{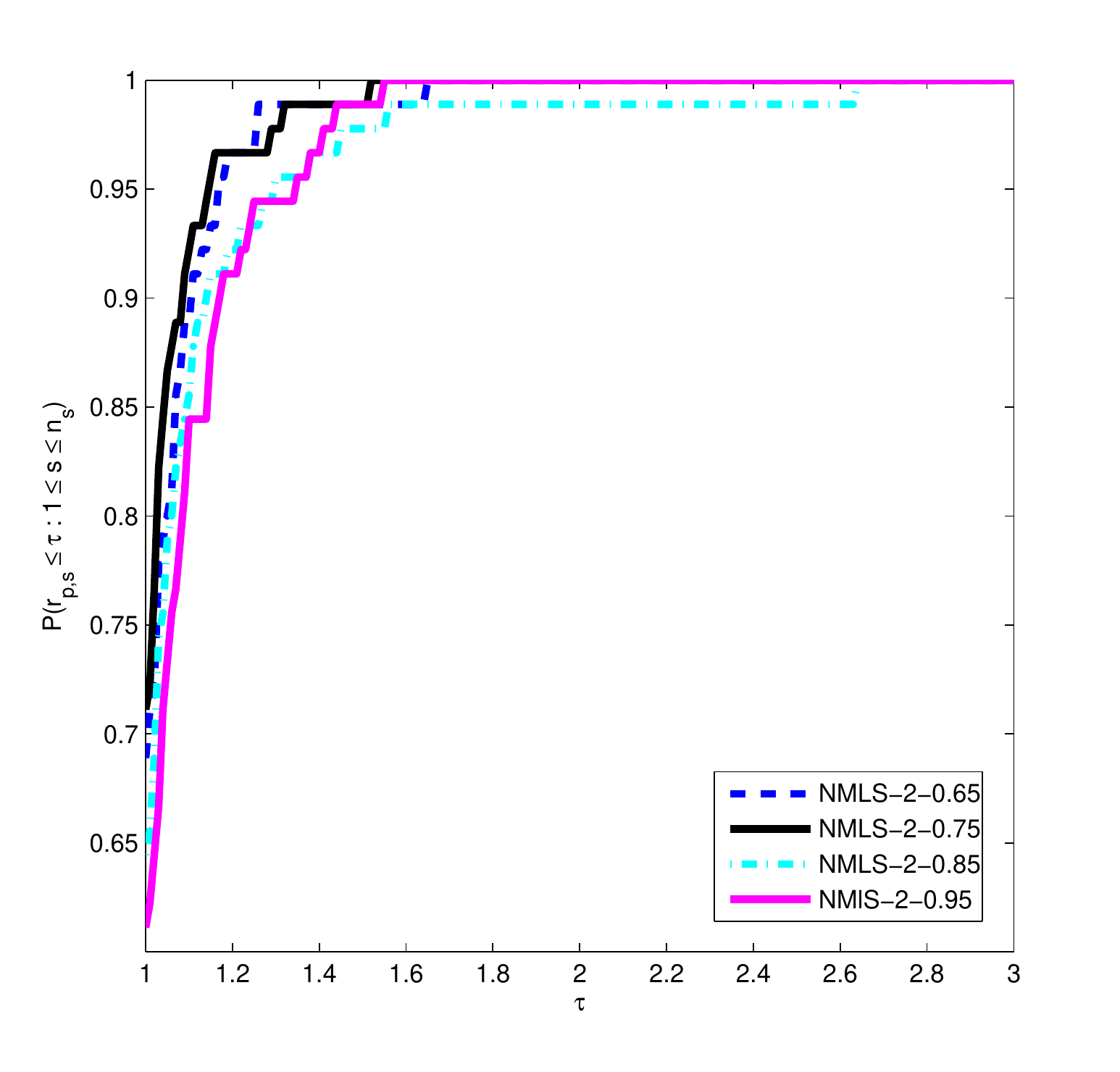}} 
 \qquad 
 \subfloat[][$N_f$ performance profile (NMLS-1)]{\includegraphics[width=7.5cm]{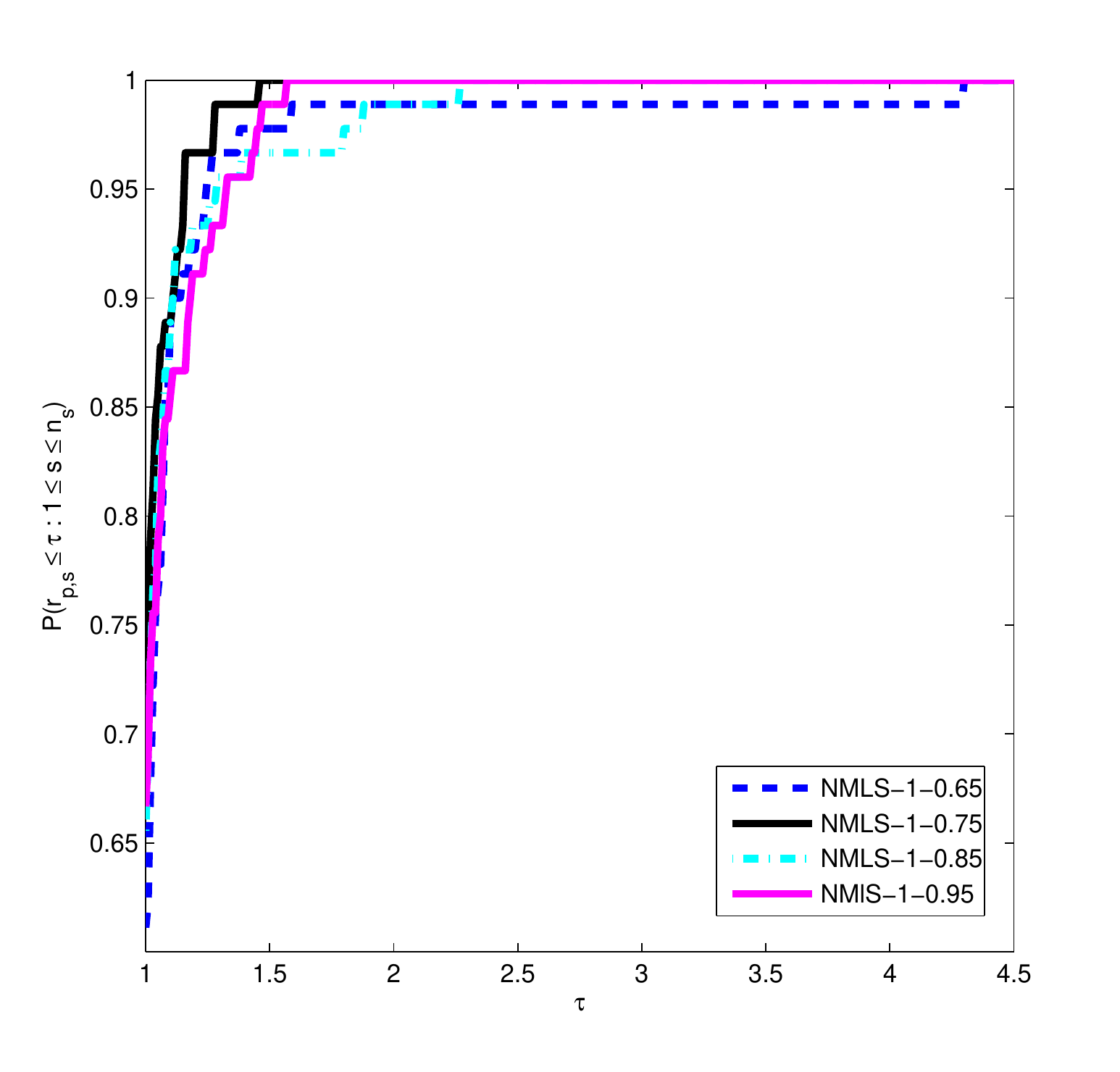}}%
 \qquad 
 \subfloat[][$N_f$ performance profile (NMLS-2)]{\includegraphics[width=7.5cm]{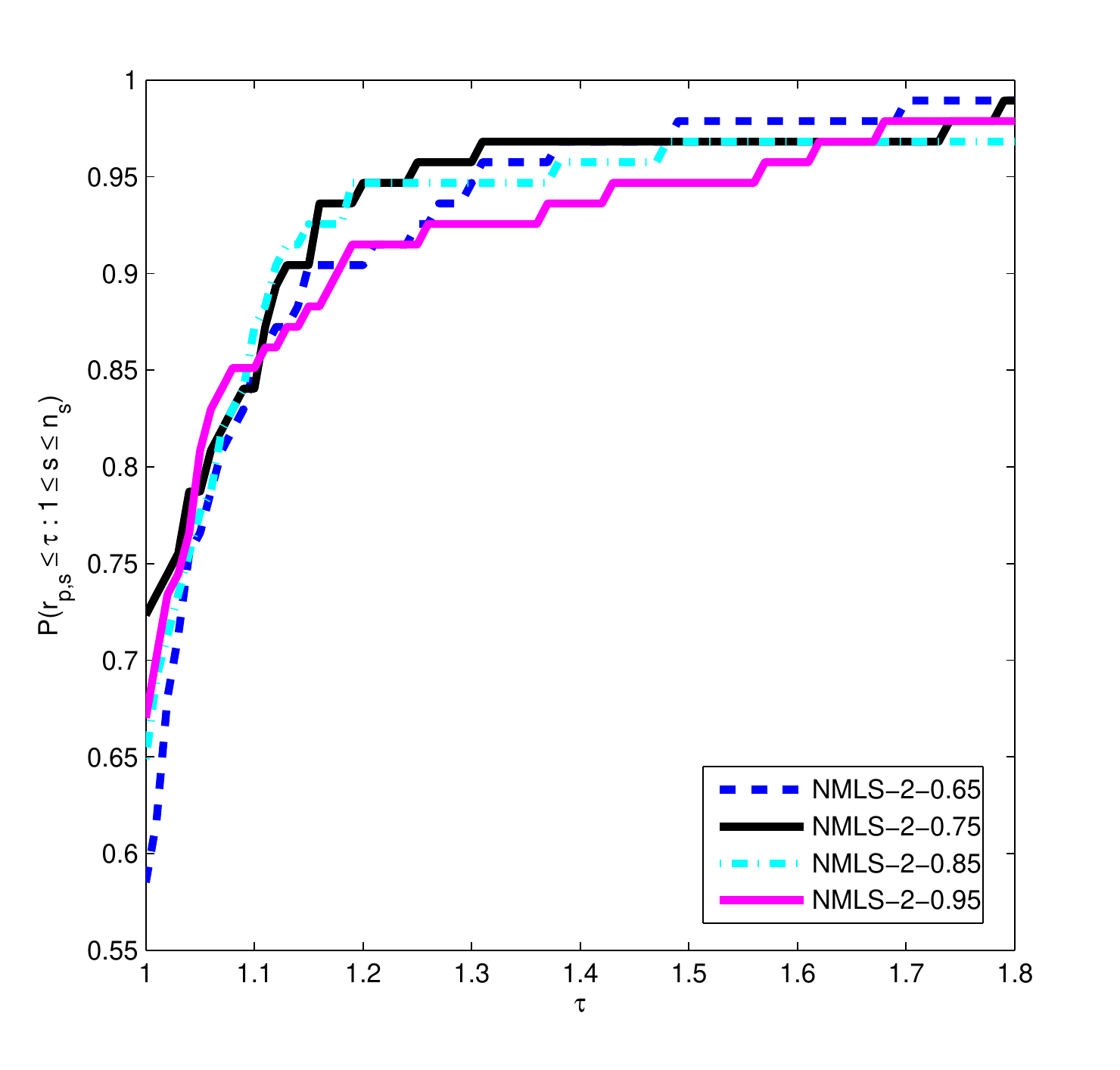}} 
 \qquad 
 \subfloat[][$N_f  +  3 N_g$ performance profile (NMLS-1)]{\includegraphics[width=7.5cm]{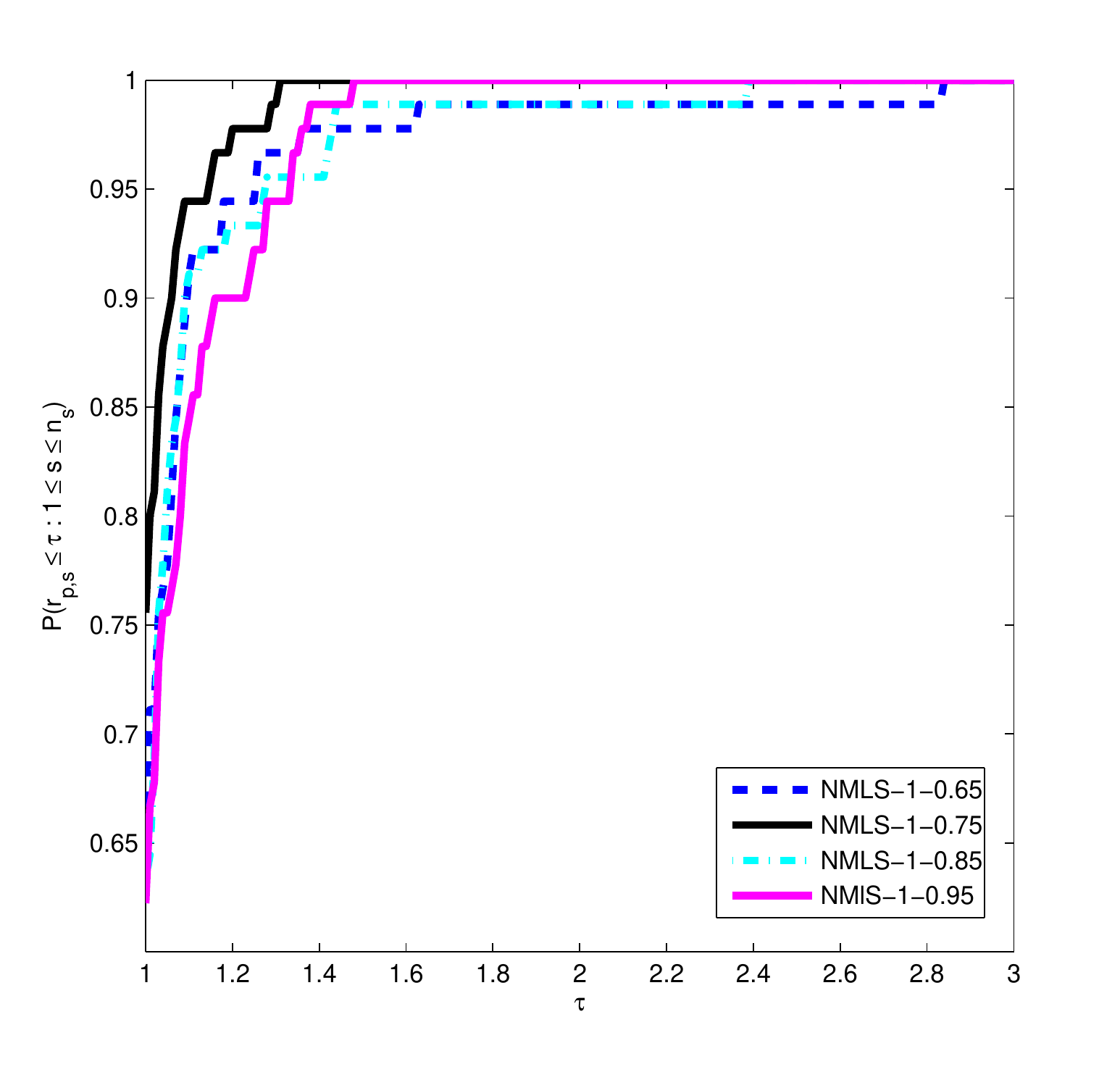}}%
 \qquad 
 \subfloat[][$N_f  +  3 N_g$ performance profile (NMLS-2)]{\includegraphics[width=7.5cm]{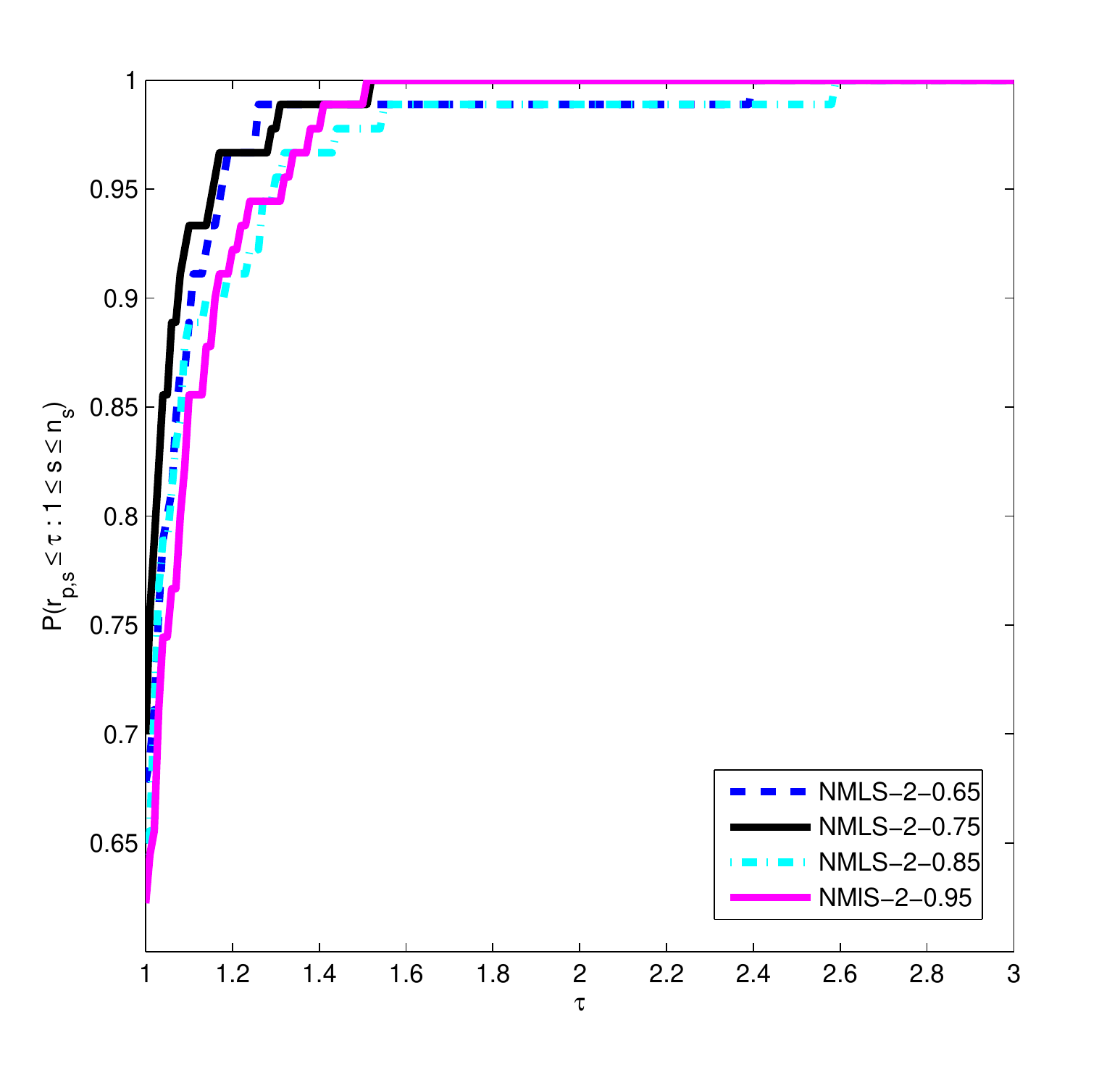}} 
 \caption{Performance profiles of NMLS-1 and NMLS-2 with the performance measures: (a) and (b) for the number of iterations ($N_i$) or gradient evaluations ($N_g$); (c) and (d) for the number of function evaluations ($N_f$); (e) and (f) for the hybrid measure $N_f  +  3 N_g$.}
\end{figure} 

In Figure 4, subfigures (a), (c) and (e) suggest that the results of NMLS-1 with $\eta_0=0.75$ are considerably better than those reported for others parameters regarding all of considered measures. In particular, it wins 77\%, 73\% and 75\% score among others for $N_i$, $N_f$ and $N_f+3 N_g$, respectively. The same results for NMLS-2 in subfigures (b), (d) and (f) of Figure 4 can be observed, where NMLS-2-0.75 respectively wins in 70\%, 71\% and 70\% of the cases for the considered measures. Therefore, we consider $\eta_0 = 0.75$ for our algorithms and for the sake of simplicity denote NMLS-1-0.75 and NMLS-2-0.75 by NMLS-1 and NMLS-2 in the rest of the paper. 

\begin{figure}\label{com6} 
 \centering 
 \subfloat[][$N_i$ and $N_g$ performance profile]{\includegraphics[width=7.7cm]{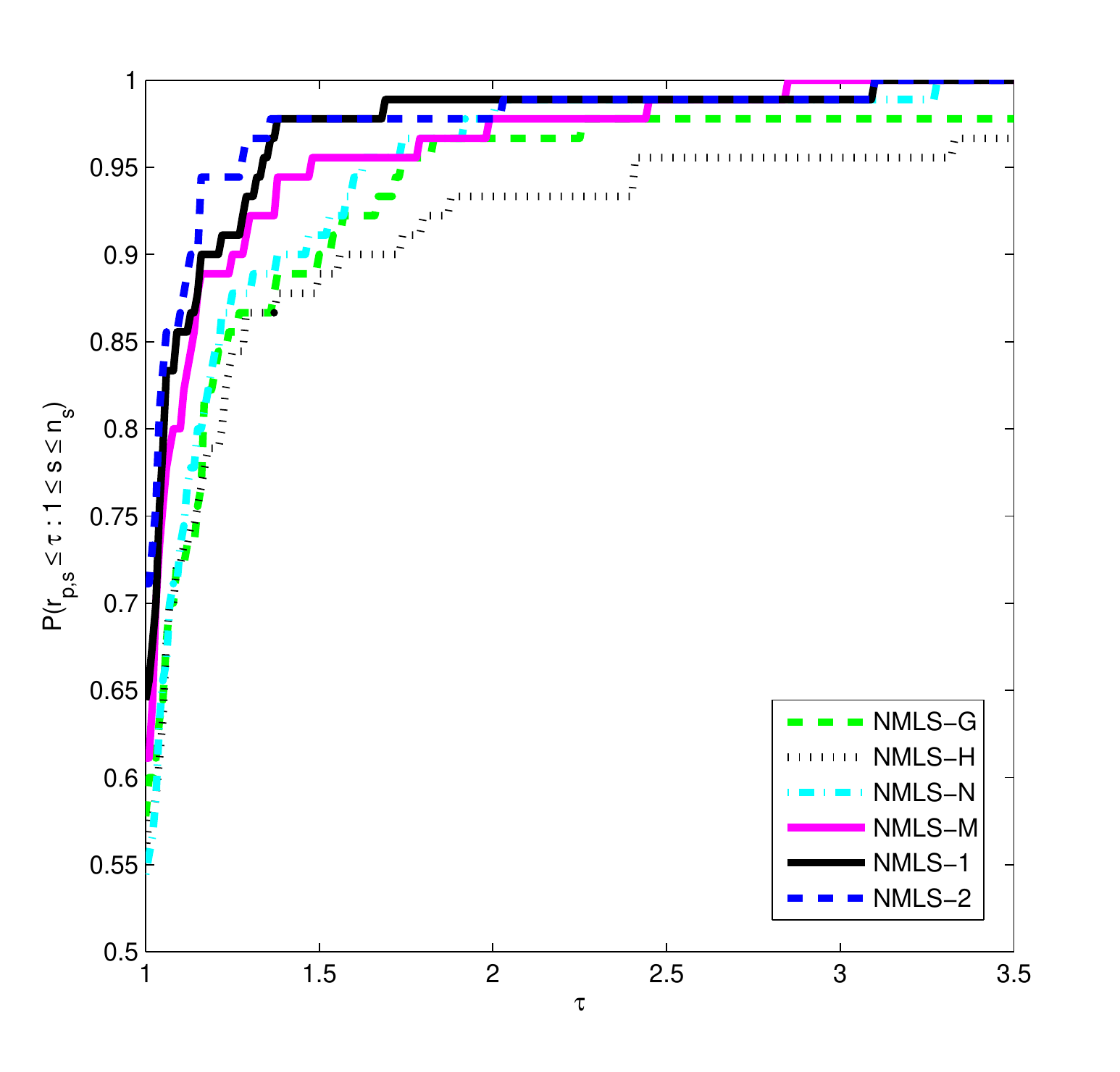}}%
 \qquad 
 \subfloat[][$N_f$ performance profile]{\includegraphics[width=7.7cm]{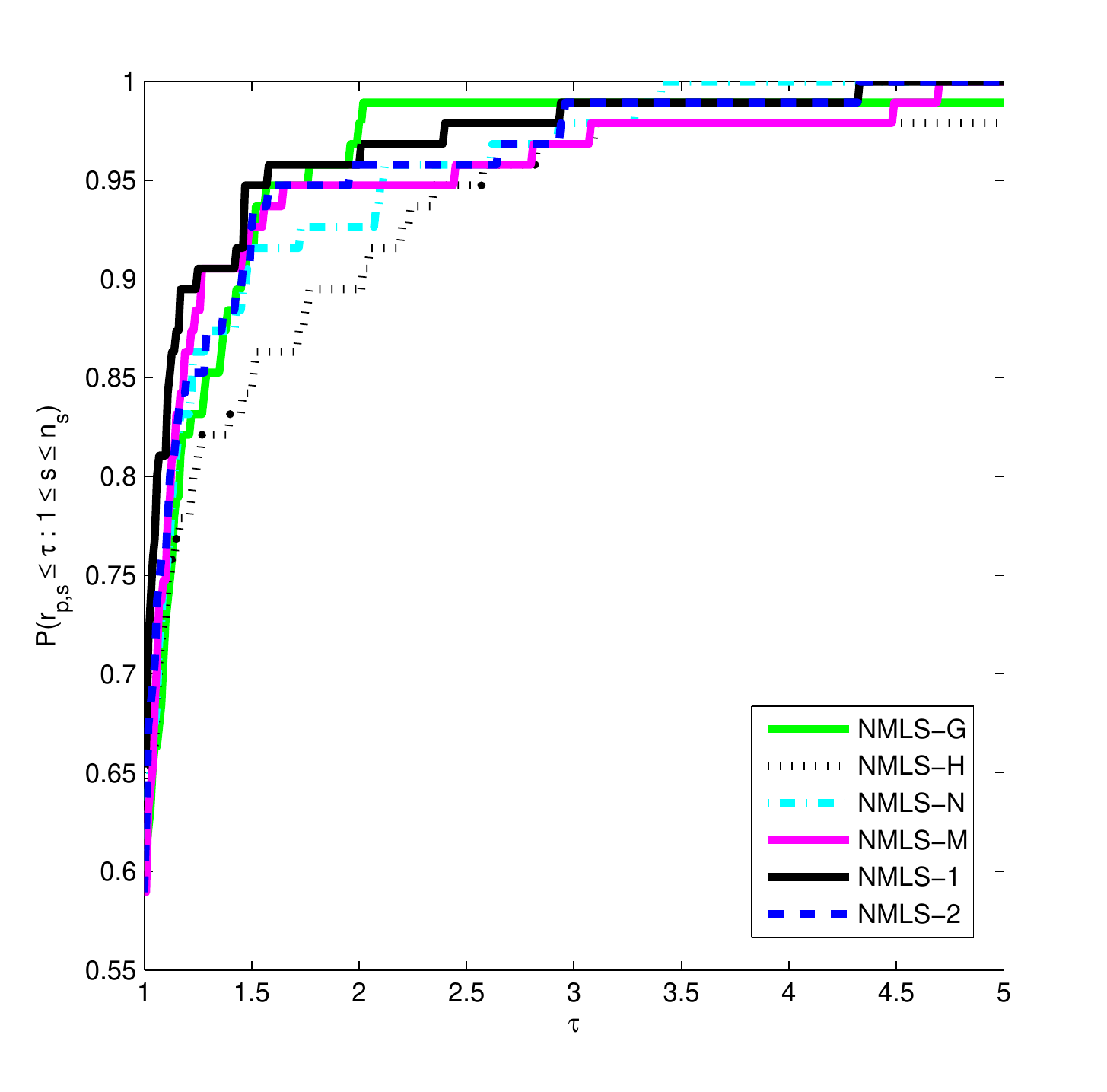}} 
 \qquad 
 \subfloat[][$N_f  +  3 N_g$ performance profile ($\tau = 1.25$)]{\includegraphics[width=7.7cm]{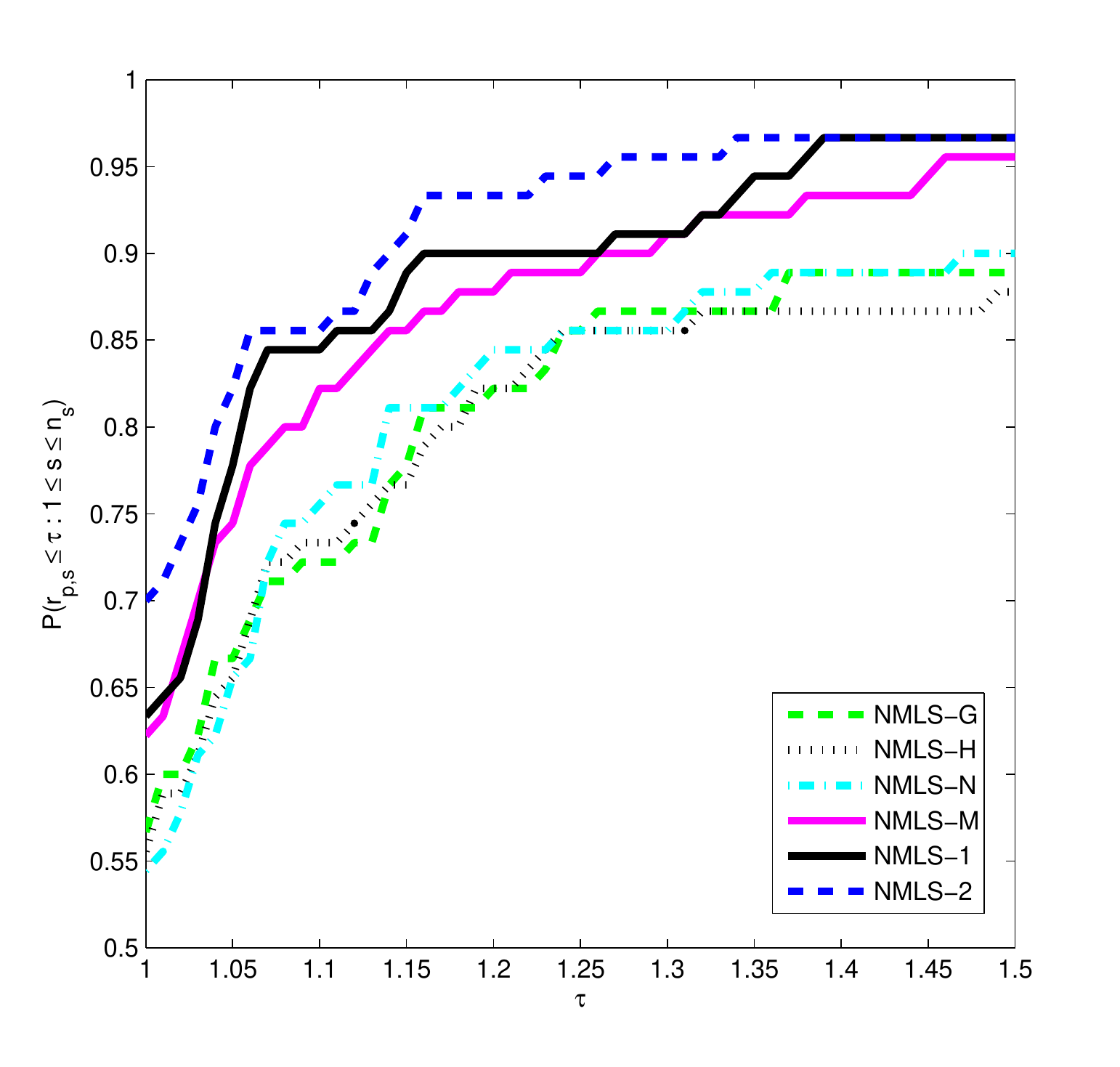}}%
 \qquad 
 \subfloat[][$N_f  +  3 N_g$ performance profile ($\tau = 5$)]{\includegraphics[width=7.7cm]{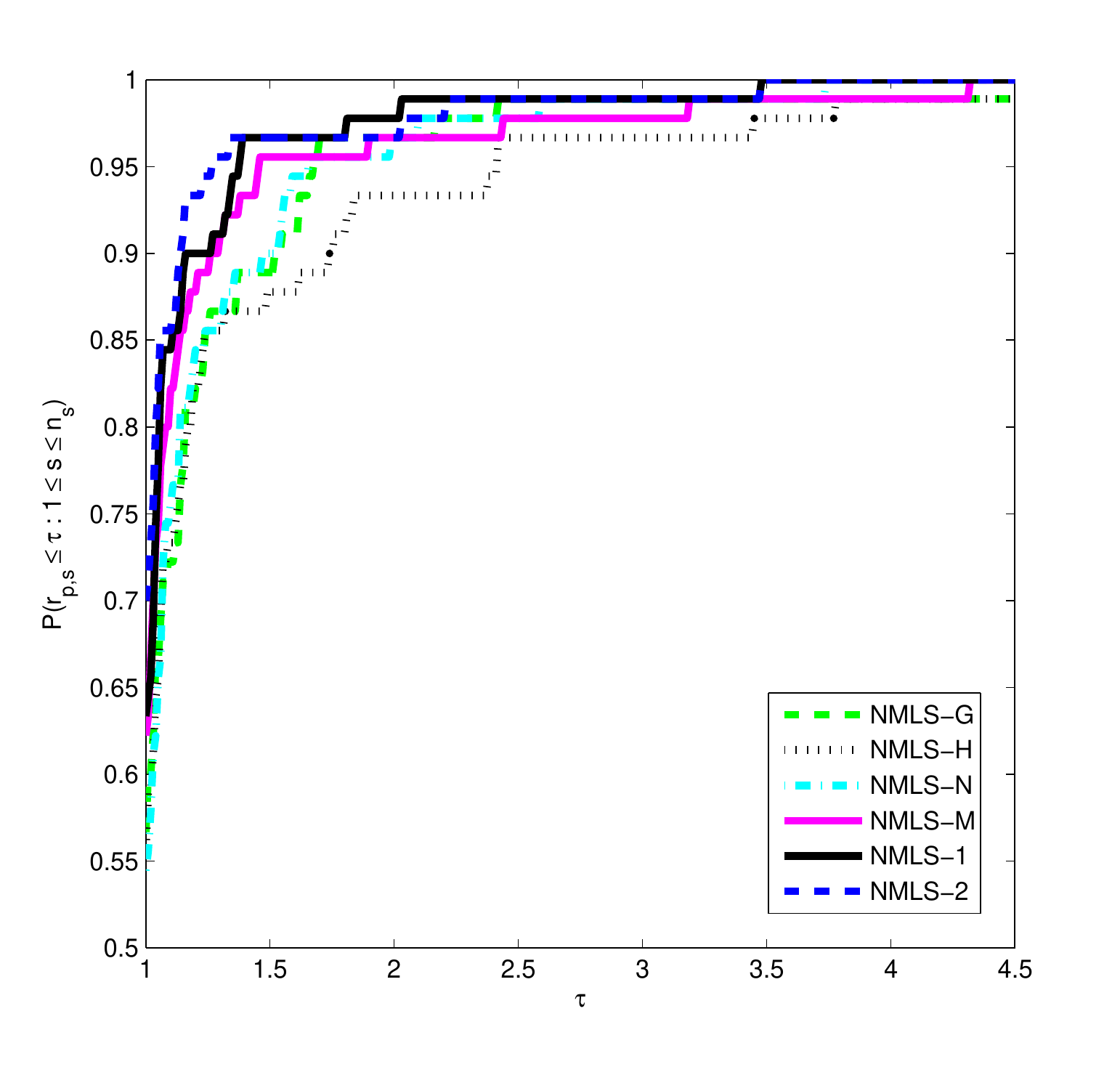}} 
 \caption{Performance profiles of all considered algorithms measured by: (a) the number of iterations ($N_i$) or gradient evaluations ($N_g$); (b) the number of function evaluations ($N_f$); (c) and (d) the hybrid measure $N_f  +  3 N_g$.} 
\end{figure} 

In this point, we test NMLS-G, NMLS-H, NMLS-R, NMLS-M, NMLS-1 and NMLS-2 to solve the problem (\ref{e.func}) by the LBFGS direction. Results of implementation are illustrated in Figure 5. From subfigure (a) of Figure 5, NMLS-2 obtains the most wins by 71\%, then NMLS-1 has the next place by 64\%. Moreover, NMLS-1 and NMLS-2 solve all problems in about $\tau = 3.8$. The subfigure (b) of Figure 5 demonstrates the number of function evaluations suggesting the similar results discussed about the subfigure (a). In Figure 5, subfigures (c) and (d) illustrate the performance profile of the algorithms with the measure $N_f+3 N_g$ by different amount of $\tau$ indicating that NMLS-1 and NMLS-2 win by 63\% and 70\% score among others, and they also solve the problems in less amount of $\tau$.   

\subsection{Experiment with Barzilai-Borwein}
The Barzilai-Borwein (BB) method for solving the problem (\ref{e.func}) is a gradient-type method proposed by {\sc Barzilai \& Borwein} in \cite{BarB}, where a step-size along the gradient descent direction $-g_k$ is generated using a two-point approximation of the secant equation $B_k s_{k-1} = y_{k-1}$ with $s_{k-1}=x_{k-1}-x_{k-2}$ and $y_{k-1}=g_{k-1}-g_{k-2}$. In particular, by imposing $B_k= \sigma_k I$ and solving the least-squares problem
\begin{equation*}
\begin{array}{ll} 
\textrm{minimize}   &  ~  \|\sigma s_{k-1} - y_{k-1}\|_2^2 \\ 
\textrm{subject to} &  ~  \sigma \in \mathbb{R},
\end{array} 
\end{equation*}
one can obtain
\[
\sigma_k^{BB1} = \frac{s_{k-1}^T y_{k-1}}{s_{k-1}^T s_{k-1}}.
\]
Hence the two-point approximated quasi-newton direction is computed by
\begin{equation*} 
d_{k} = - \left( \sigma_k^{BB1} \right)^{-1} g_k = - \frac{s_{k-1}^T s_{k-1}}{s_{k-1}^T y_{k-1}} g_k.
\end{equation*}
Since $\sigma_k$ in this direction can be unacceptably small or large for non-quadratic objective function, we use the following safeguarded step-size
\begin{equation}\label{e.mdbb1}
d_k^{BB1} = \left\{
\begin{array}{ll}
-\left( \sigma_k^{BB1} \right)^{-1} g_k &~~ \mathrm{if}~ 10^{-10} \leq \left( \sigma_k^{BB1} \right)^{-1} \leq 10^{10},\\\\
-g_k                                           &~~ \mathrm{otherwise.}
\end{array}
\right.
\end{equation}
Similarly, setting $B_k^{-1} = \frac{1}{\sigma_k} I$ and solving the minimization problem
\begin{equation*} 
\begin{array}{ll} 
\textrm{minimize}   &  ~  \|s_{k-1} - \frac{1}{\sigma} y_{k-1}\|_2^2 \\ 
\textrm{subject to} &  ~  \sigma \in \mathbb{R},
\end{array} 
\end{equation*} 
we obtain the step-size
\[
\sigma_k^{BB2} = \frac{y_{k-1}^T s_{k-1}}{y_{k-1}^T y_{k-1}}.
\]
Considering the safeguard used in (\ref{e.mdbb1}), we obtain the following search direction 
\begin{equation}\label{e.mdbb2}
d_k^{BB2} = \left\{
\begin{array}{ll}
-\left( \sigma_k^{BB2} \right)^{-1} g_k &~~ \mathrm{if}~ 10^{-10} \leq \left( \sigma_k^{BB2} \right)^{-1} \leq 10^{10},\\\\
-g_k                                           &~~ \mathrm{otherwise.}
\end{array}
\right.
\end{equation}
The numerical experiments with the Barzilai-Borwein directions have shown the significant development in efficiency of gradient methods. Being computationally efficient and needing low memory requirement make this scheme interesting to solve large-scale optimization problems. Therefore, it receives much attention during the last two decades and lots of modifications and developments for both unconstrained and constrained optimization have been proposed, for example see \cite{AndBM,BirMR1,BirMR2, DaiF, DaiHSZ, DaiYY, HagMZ, JudRRS,LueRGH, Ray} and references therein.

In the rest of this subsection, we consider versions of Algorithm 2 equipped with the nonmonotone terms using the Barzilai-Borwein directions (\ref{e.mdbb1}) and (\ref{e.mdbb2}) for solving the problem (\ref{e.func}). We here set $\sigma = 10^{-4}$. To find the best possible parameter $\eta_0$, we consider $\eta_0 = 0.65$, $\eta_0 = 0.75$, $\eta_0 = 0.80$ and $\eta_0 = 0.90$ and run NMLS-1 and NMLS-2 for both directions (\ref{e.mdbb1}) and (\ref{e.mdbb2}). The corresponding results are summarized in Figures 6 and 7, where the first row shows the performance profile for the number of gradients $N_g$, the second row shows the performance profile for the number of function evaluations $N_f$ and the third row shows the performance profile for $N_f + 3N_g$. From all subfigures of Figures 6 and 7, we conclude that $\eta_0 = 0.80$ and $\eta_0 = 0.90$ produce acceptable results for our algorithms with respect to the directions (\ref{e.mdbb1}) and (\ref{e.mdbb2}), i.e., NMLS-1 and NMLS-2 exploit $\eta_0 = 0.80$ and $\eta_0 = 0.90$ for these directions, respectively.

\begin{figure}\label{com7_par} 
 \centering 
 \subfloat[][$N_i$ and $N_g$ performance profile (NMLS-1)]{\includegraphics[width=7.5cm]{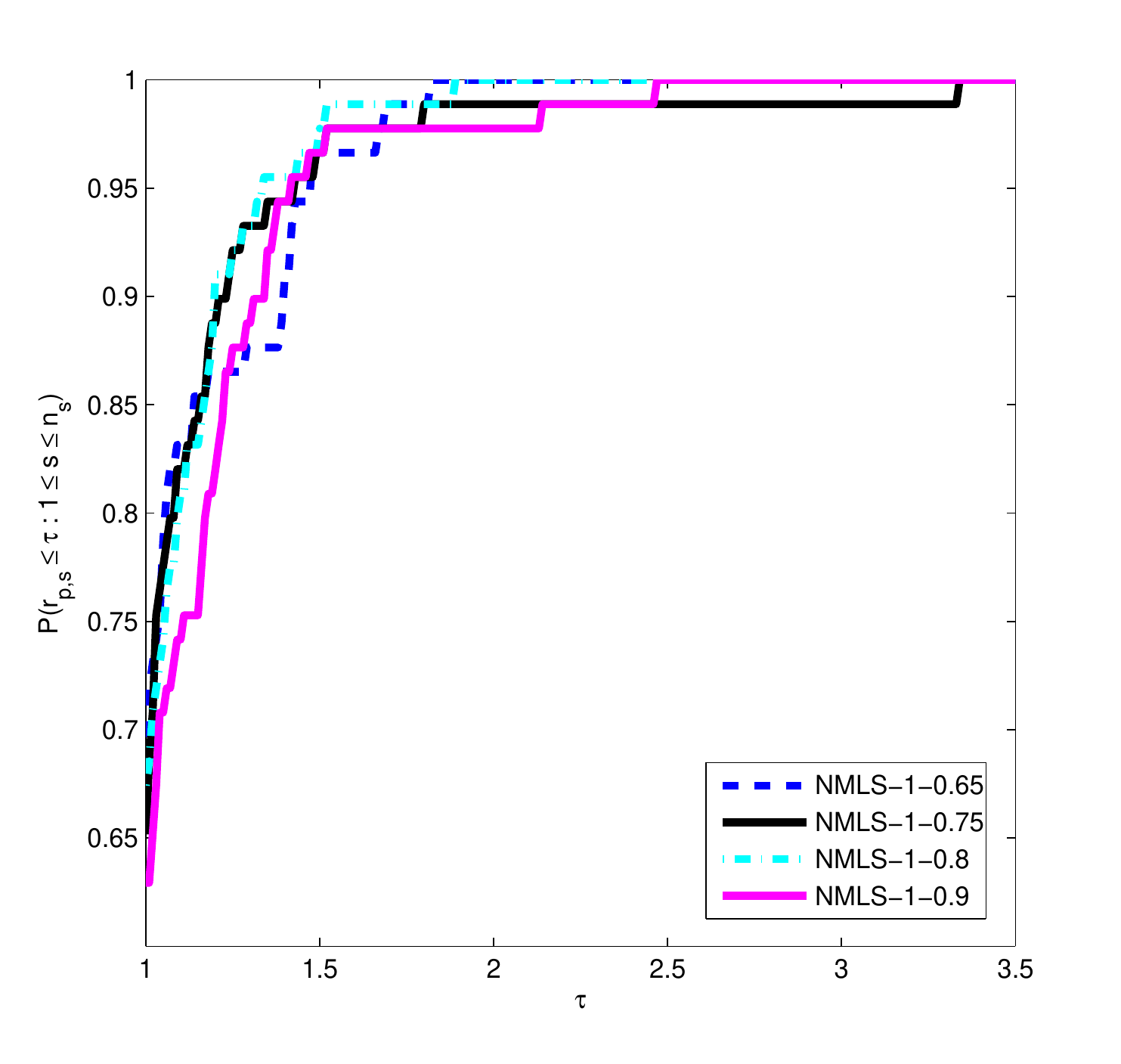}}%
 \qquad 
 \subfloat[][$N_i$ and $N_g$ performance profile (NMLS-2)]{\includegraphics[width=7.5cm]{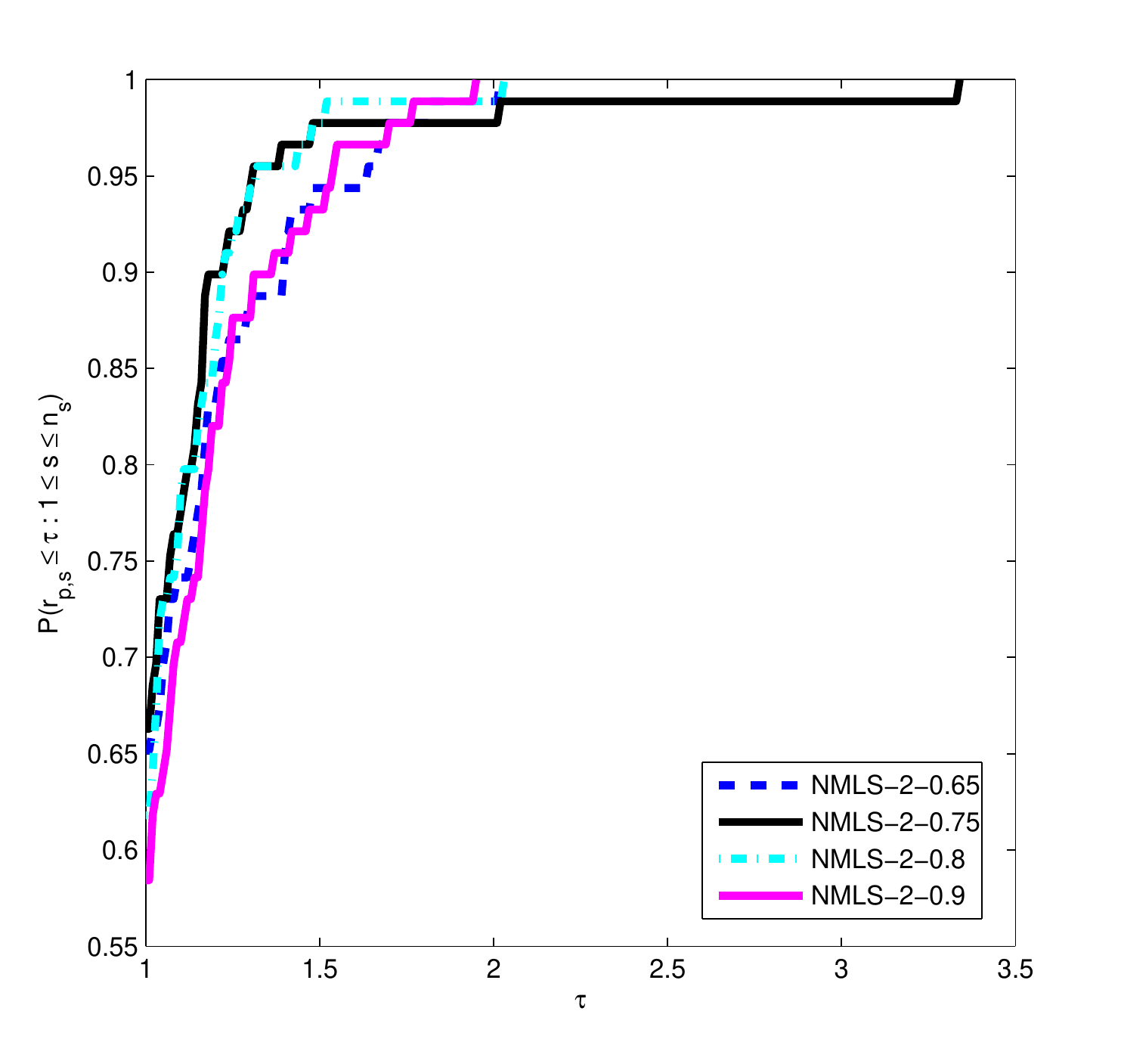}} 
 \qquad 
 \subfloat[][$N_f$ performance profile (NMLS-1)]{\includegraphics[width=7.5cm]{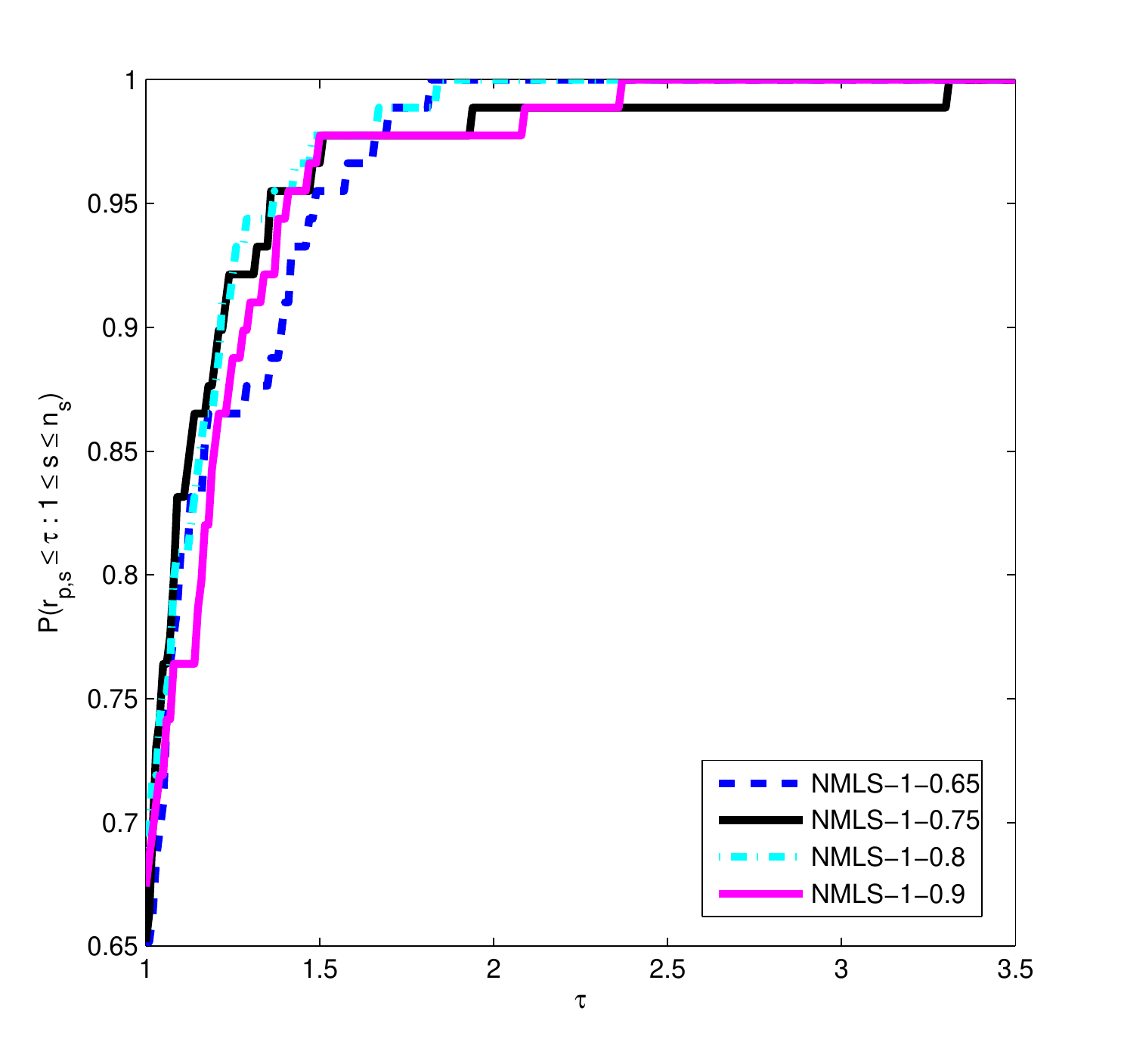}}%
 \qquad 
 \subfloat[][$N_f$ performance profile (NMLS-2)]{\includegraphics[width=7.5cm]{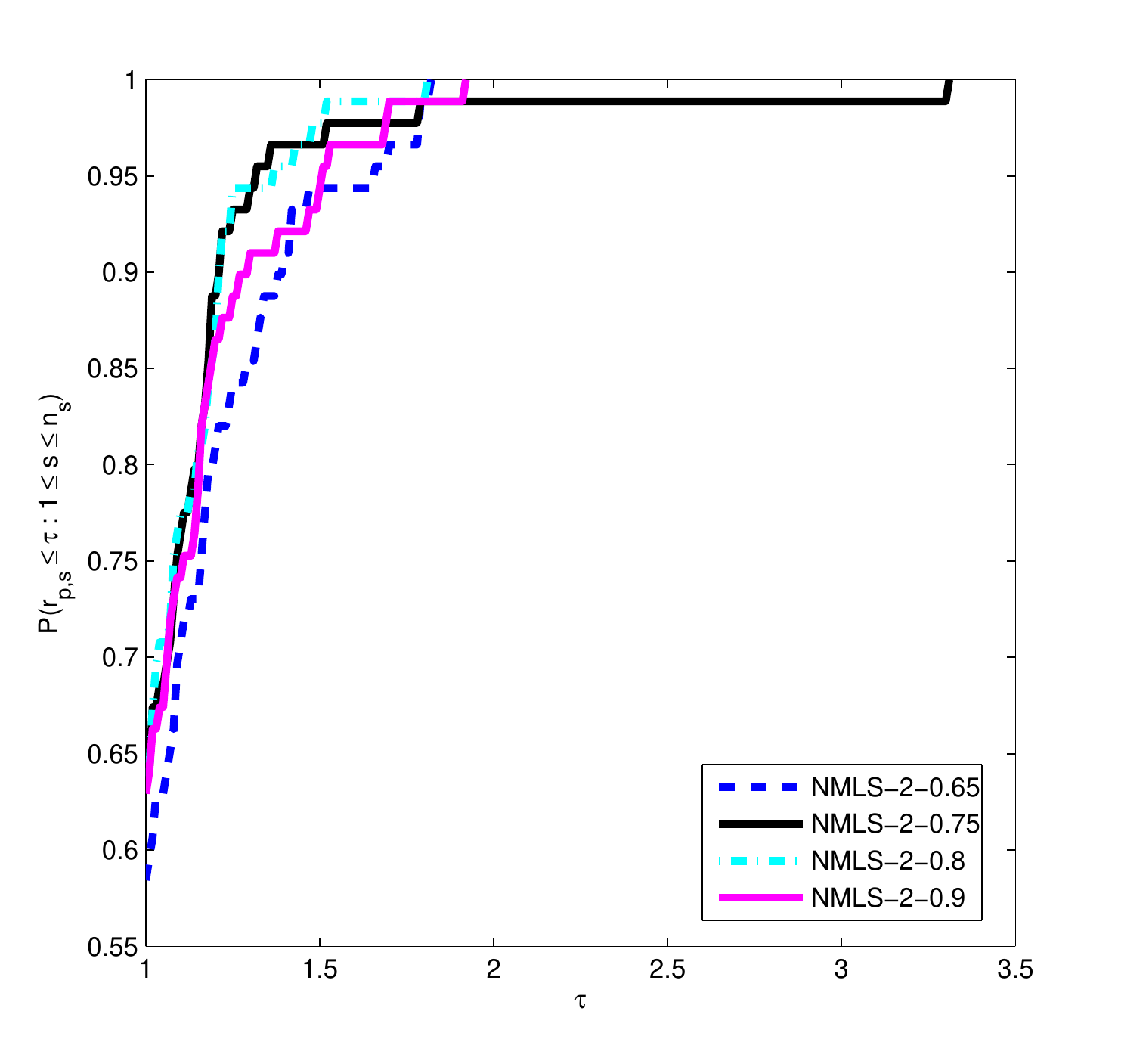}} 
 \qquad 
 \subfloat[][$N_f  +  3 N_g$ performance profile (NMLS-1)]{\includegraphics[width=7.5cm]{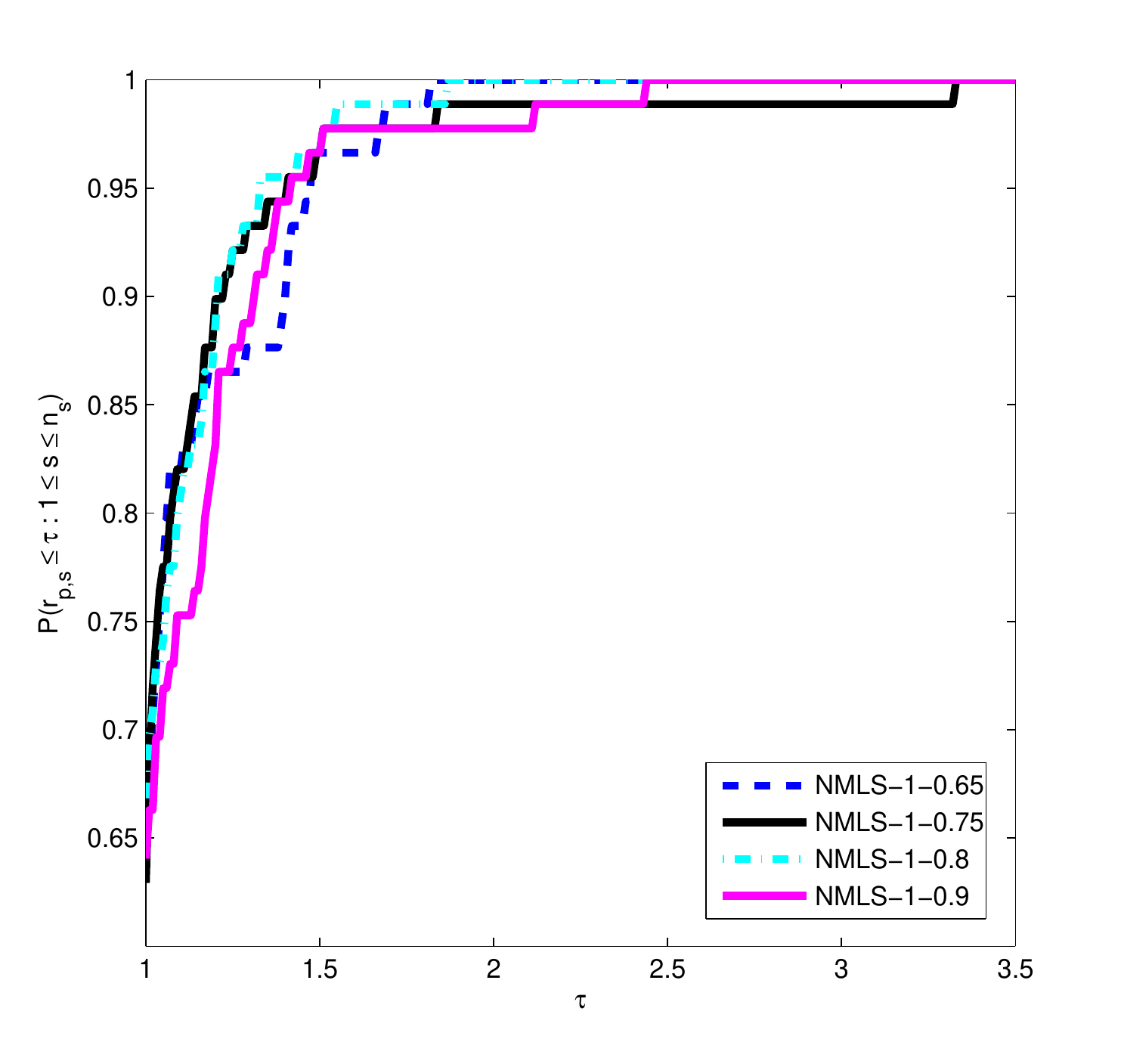}}%
 \qquad 
 \subfloat[][$N_f  +  3 N_g$ performance profile (NMLS-2)]{\includegraphics[width=7.5cm]{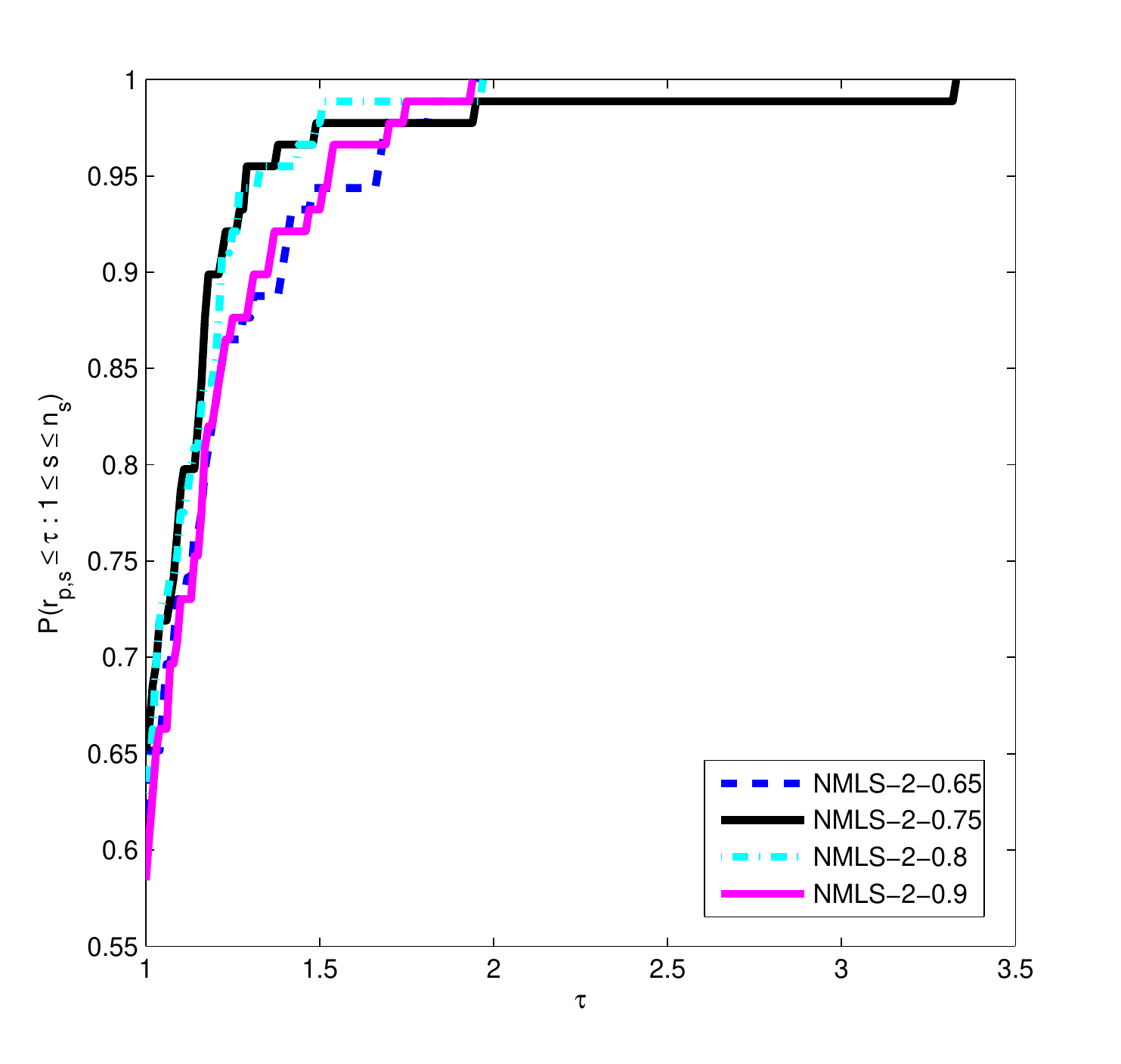}} 
 \caption{Performance profiles of NMLS-1 and NMLS-2 with the performance measures: (a) and (b) for the number of iterations ($N_i$) or gradient evaluations ($N_g$); (c) and (d) for the number of function evaluations ($N_f$); (e) and (f) for the hybrid measure $N_f  +  3 N_g$ .}
\end{figure} 

\begin{figure}\label{com7_par} 
 \centering 
 \subfloat[][$N_i$ and $N_g$ performance profile (NMLS-1)]{\includegraphics[width=7.5cm]{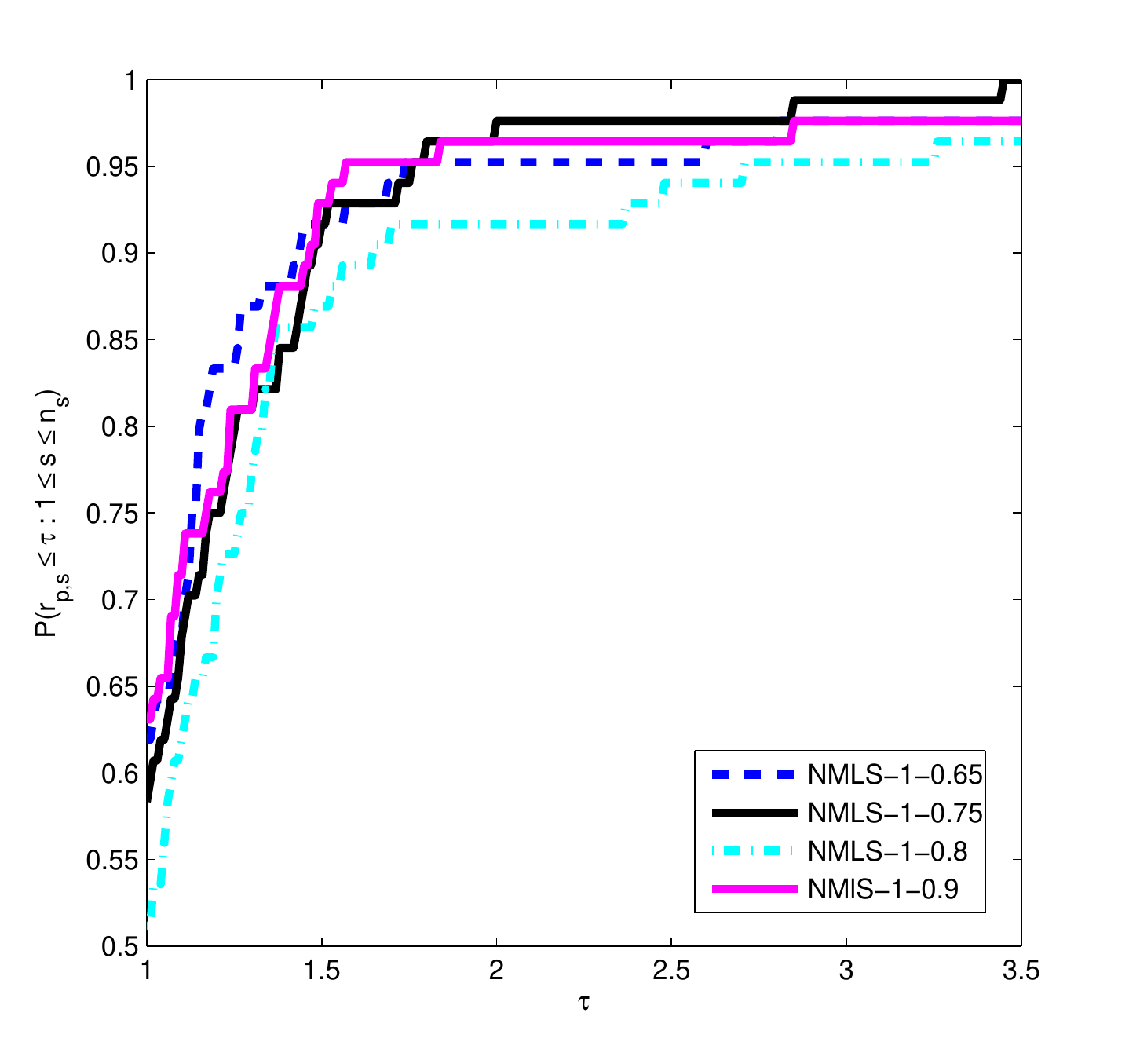}}%
 \qquad 
 \subfloat[][$N_i$ and $N_g$ performance profile (NMLS-2)]{\includegraphics[width=7.5cm]{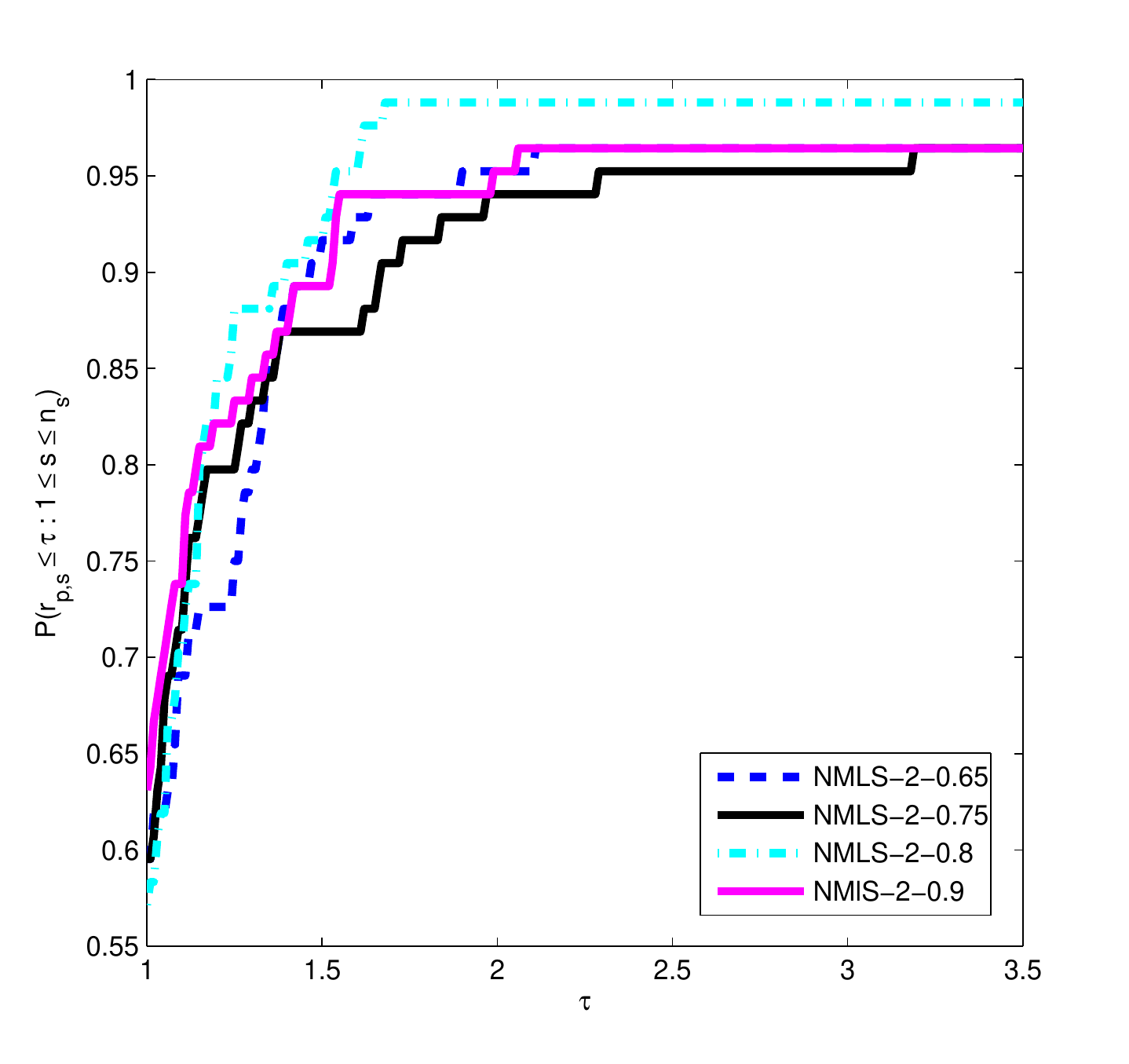}} 
 \qquad 
 \subfloat[][$N_f$ performance profile (NMLS-1)]{\includegraphics[width=7.5cm]{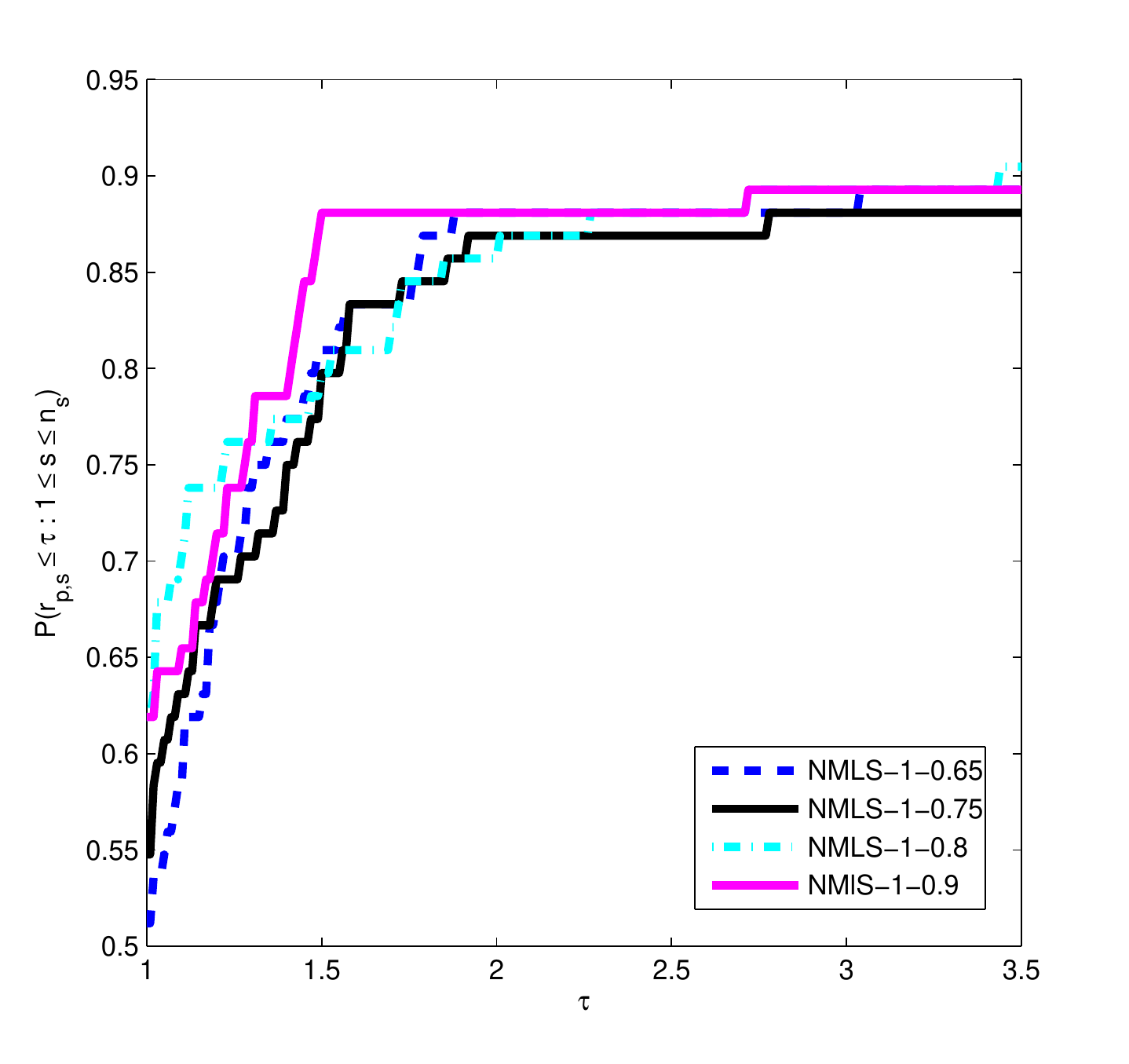}}%
 \qquad 
 \subfloat[][$N_f$ performance profile (NMLS-2)]{\includegraphics[width=7.5cm]{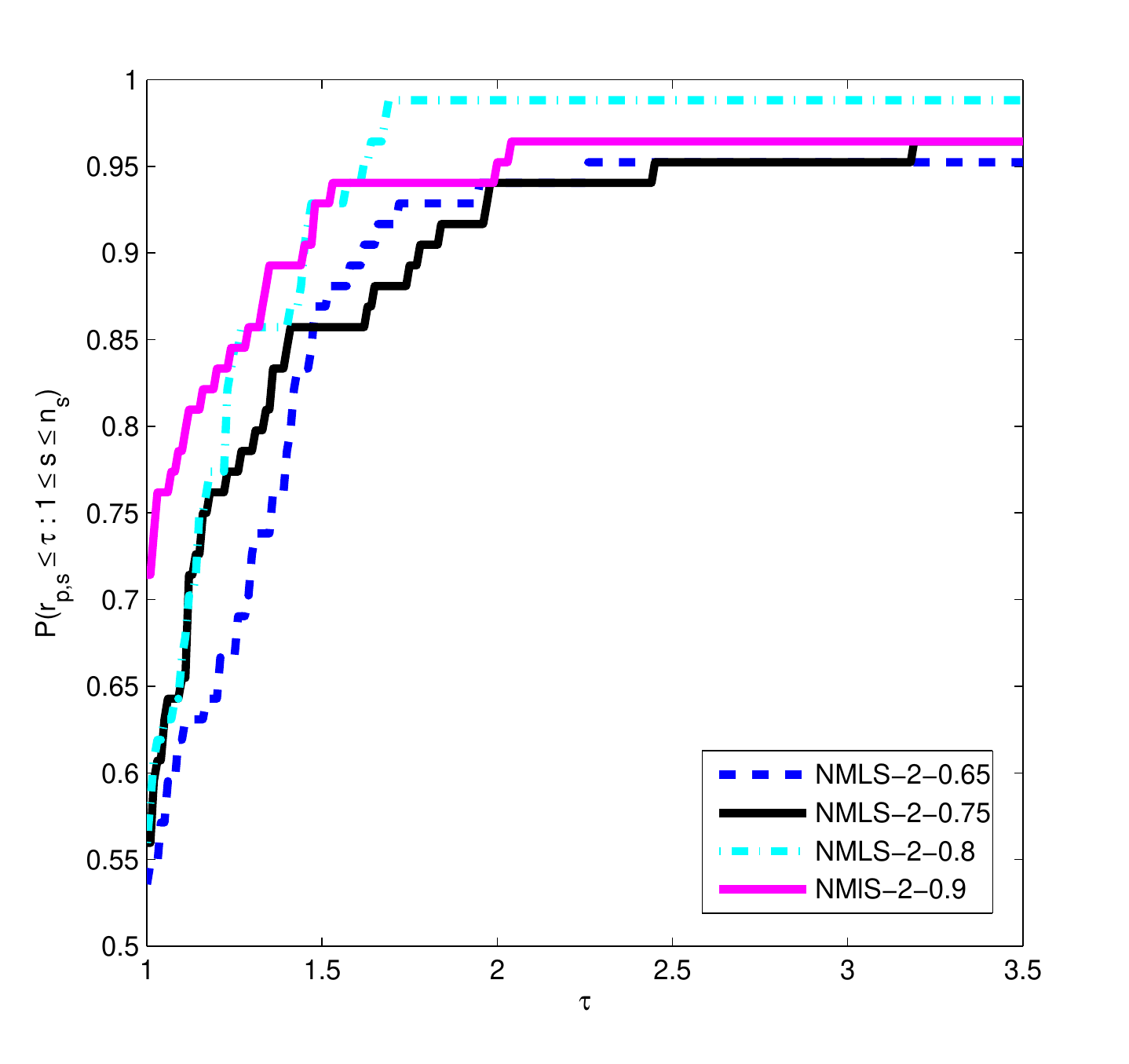}} 
 \qquad 
 \subfloat[][$N_f  +  3 N_g$ performance profile (NMLS-1)]{\includegraphics[width=7.5cm]{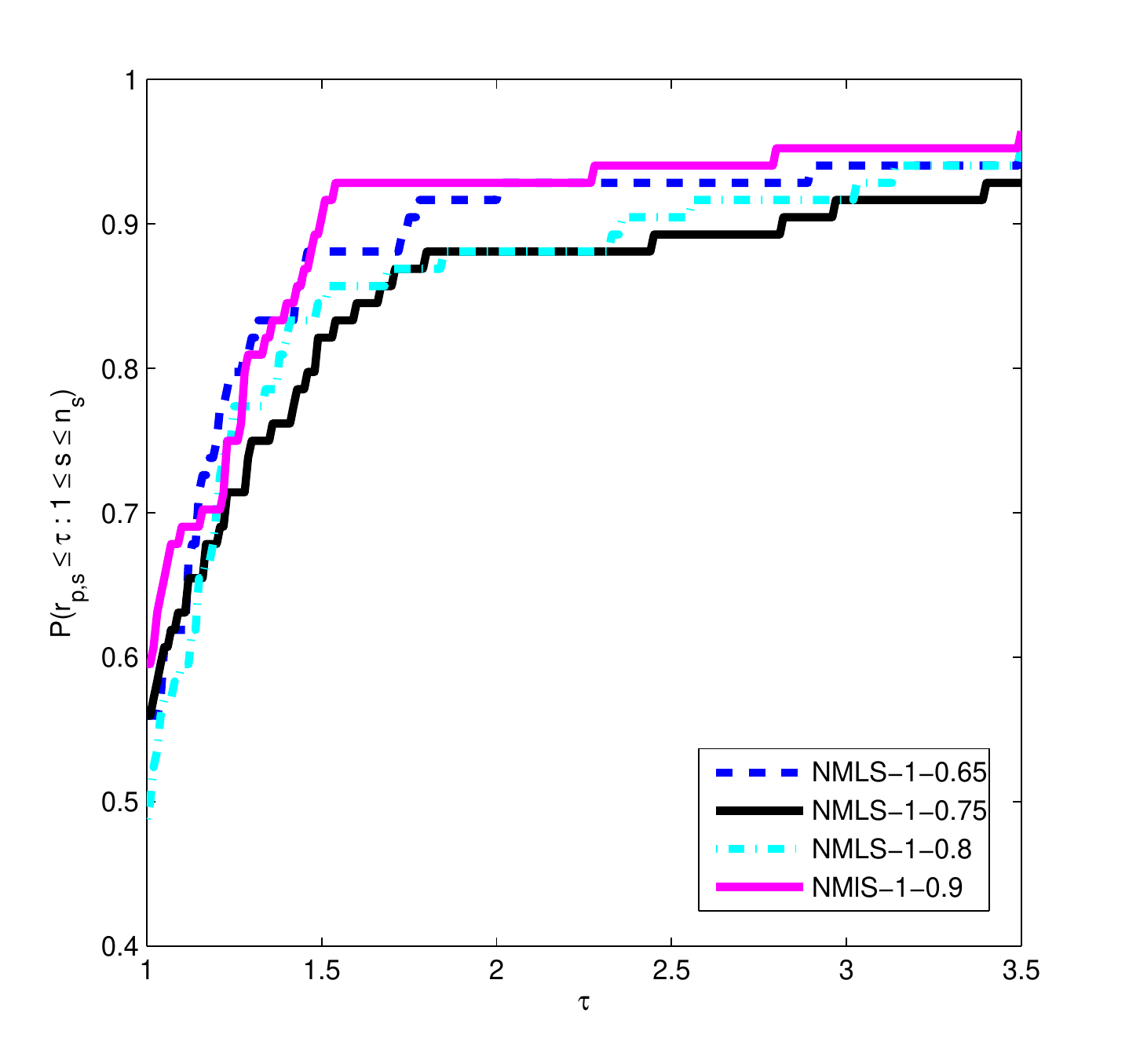}}%
 \qquad 
 \subfloat[][$N_f  +  3 N_g$ performance profile (NMLS-2)]{\includegraphics[width=7.5cm]{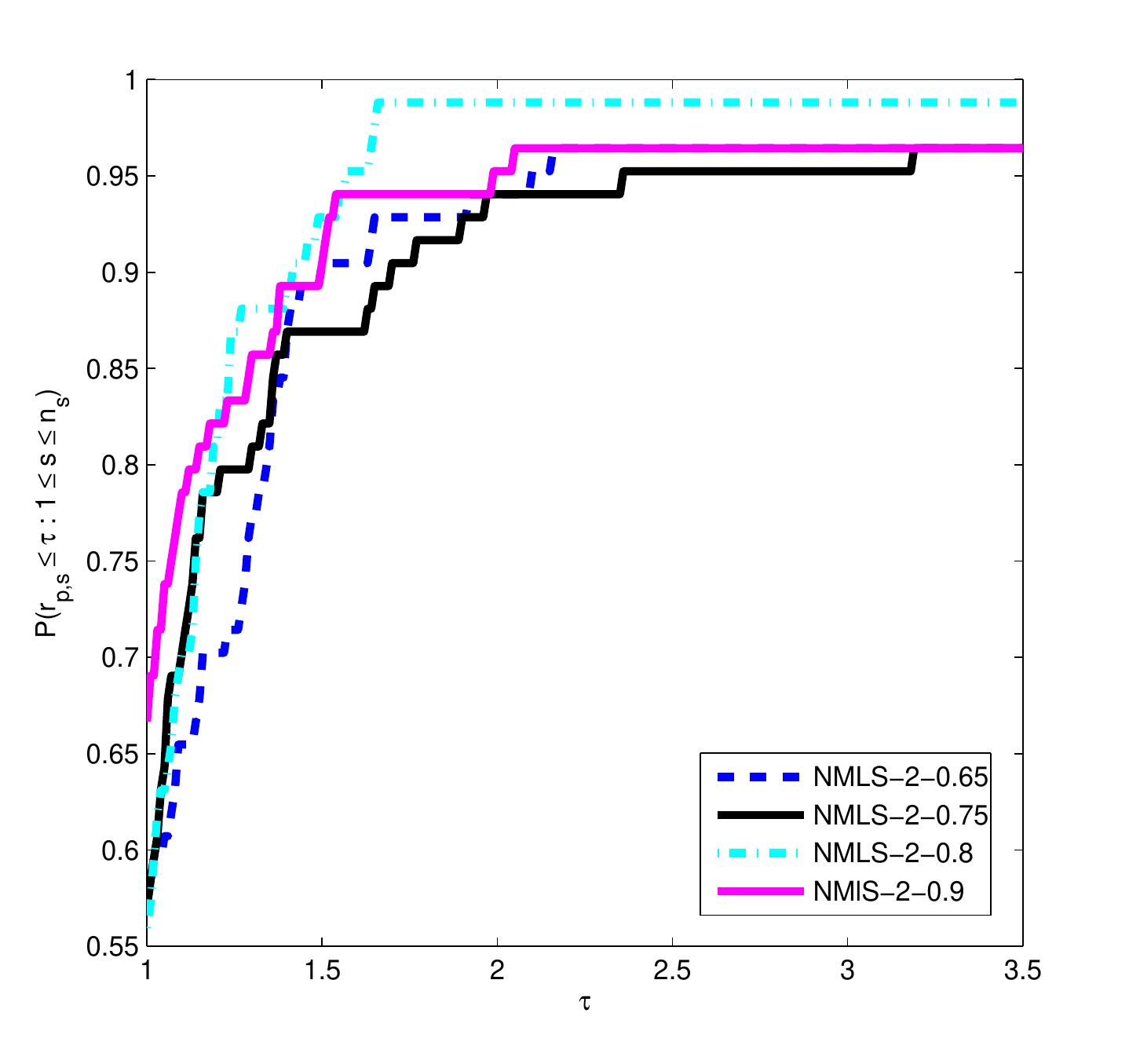}} 
 \caption{Performance profiles of NMLS-1 and NMLS-2 with the performance measures: (a) and (b) for the number of iterations ($N_i$) or gradient evaluations ($N_g$); (c) and (d) for the number of function evaluations ($N_f$); (e) and (f) for the hybrid measure $N_f  +  3 N_g$ .}
\end{figure} 

We now compare the performance of NMLS-G, NMLS-H, NMLS-N and NMLS-M, NMLS-1 and NMLS-2 using the 
directions (\ref{e.mdbb1}) and (\ref{e.mdbb2}). The test problems are those considered in the previous subsection. The related results are gathered in Tables 4 and 5. All considered algorithms are failed for some test problems in our implementation, $\mathtt{WASTON}$, $\mathtt{HARKAPER2}$, $\mathtt{POWER}$, $\mathtt{Extended~ HILBERT}$, $\mathtt{FLETCHER}$, $\mathtt{CUBE}$ and $\mathtt{Generalized~ White~ and~ Holst}$ for (\ref{e.mdbb1}) and $\mathtt{CUBE}$ and $\mathtt{Generalized~ White~ and~ Holst}$ for (\ref{e.mdbb2}), so we delete them from Tables 4 and 5. The performance profile of the algorithms are demonstrated in Figures 8 and 9 for the measures $N_g$, $N_f$ and $N_f + 3N_g$. Subfigures (a) and (b) of Figure 8 respectively illustrate the performance profile for $N_g$ and $N_f$ and indicate that the algorithms are comparable, however, NMLS-G and NMLS-H perform a little better regarding the number of function values. Subfigures (c) and (d) stand for the measure $N_f + 3N_g$ with $\tau = 1.5$ and $\tau = 3$, respectively. They show that MNLS-H attains the most wins by about 58\% and then NMLS-G by 57\% while NMLS-N, NMLS-1 and NMLS-2 attain about 53\% score of the wins and NMLS-M get the worst result by about 50\%. Similarly, subfigures (a) and (b) of Figure 9 demonstrate the performance profile for $N_g$ and $N_f$, where they are comparable regarding the number of gradient evaluations, and NMLS-H and NMLS-G perform better regarding the number of function evaluations. Subfigures (c) and (d) of Figure 9 show that NMLS-H, MNLS-G and NMLS-1 attain the most wins by about 49\%, 48\% and 47\% score, respectively.

\begin{figure}\label{com8} 
 \centering 
 \subfloat[][$N_i$ and $N_g$ performance profile]{\includegraphics[width=7.5cm]{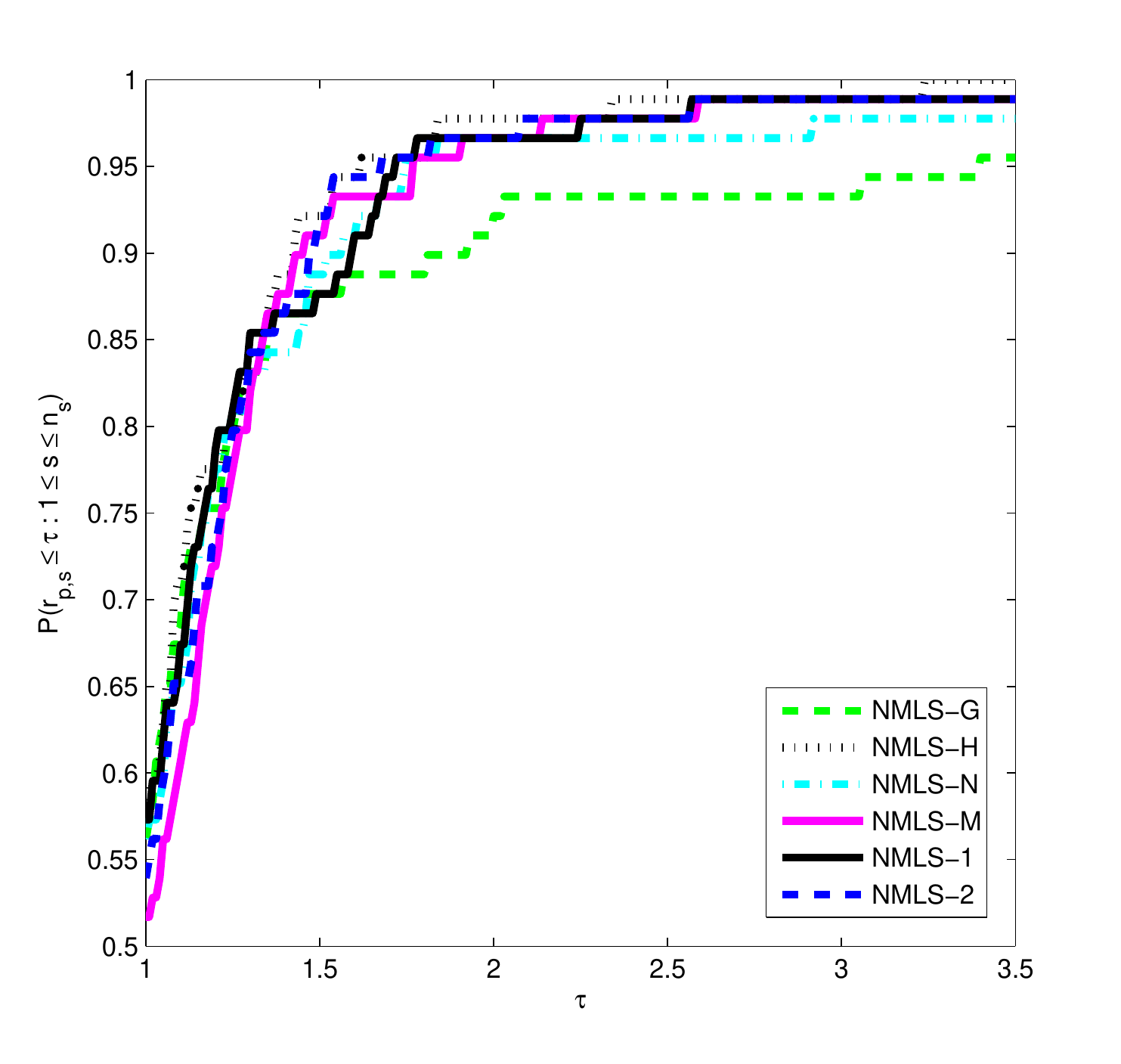}}%
 \qquad 
 \subfloat[][$N_f$ performance profile]{\includegraphics[width=7.5cm]{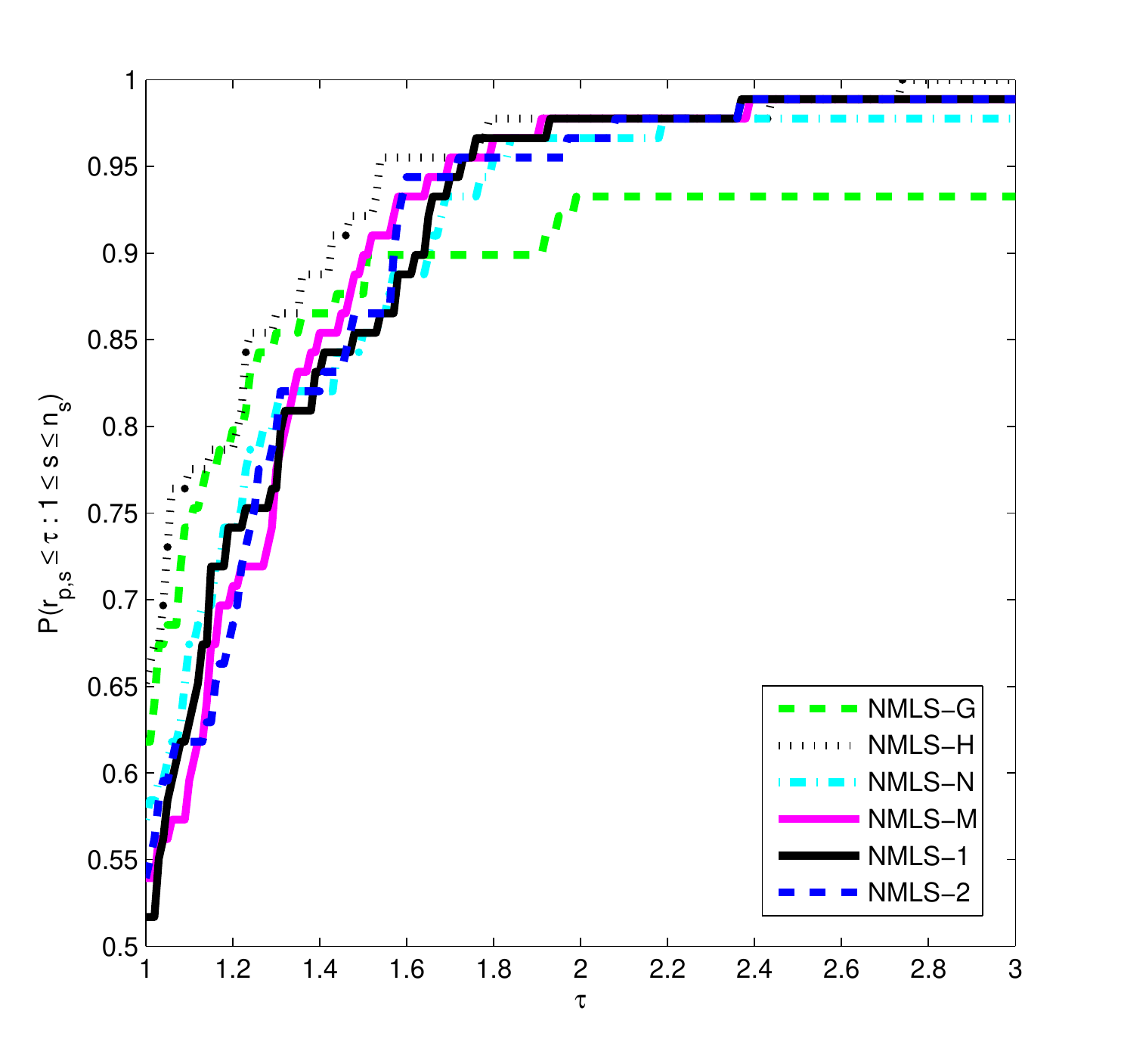}} 
 \qquad 
 \subfloat[][$N_f  +  3 N_g$ performance profile ($\tau = 1.5$)]{\includegraphics[width=7.5cm]{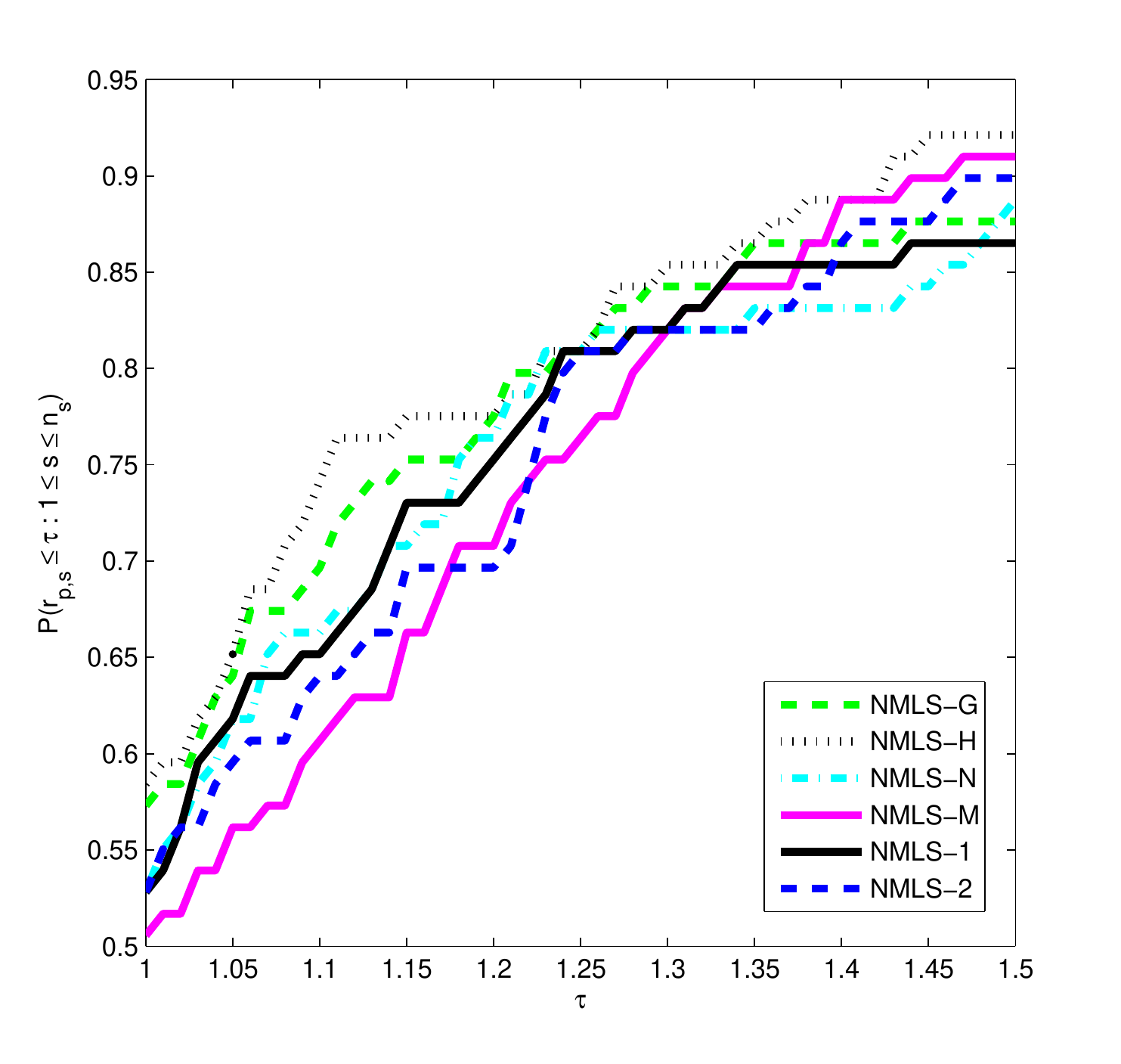}}%
 \qquad 
 \subfloat[][$N_f  +  3 N_g$ performance profile ($\tau = 3$)]{\includegraphics[width=7.5cm]{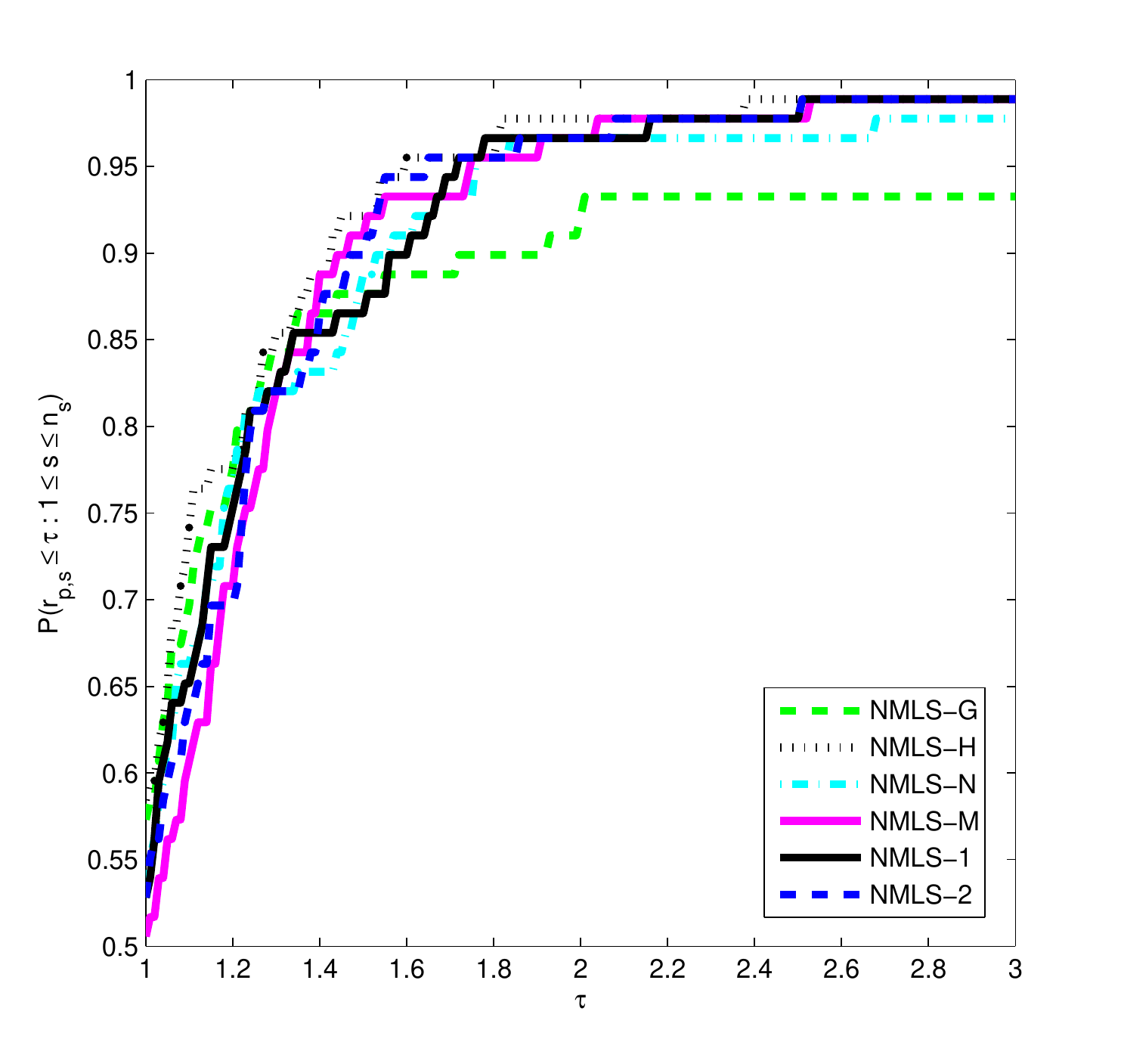}} 
 \caption{Performance profiles of all considered algorithms measured by: (a) the number of iterations ($N_i$) or gradient evaluations ($N_g$); (b) the number of function evaluations ($N_f$); (c) and (d) the hybrid measure $N_f  +  3 N_g$.} 
 \end{figure} 
 
\begin{figure}\label{com8} 
 \centering 
 \subfloat[][$N_i$ and $N_g$ performance profile]{\includegraphics[width=7.7cm]{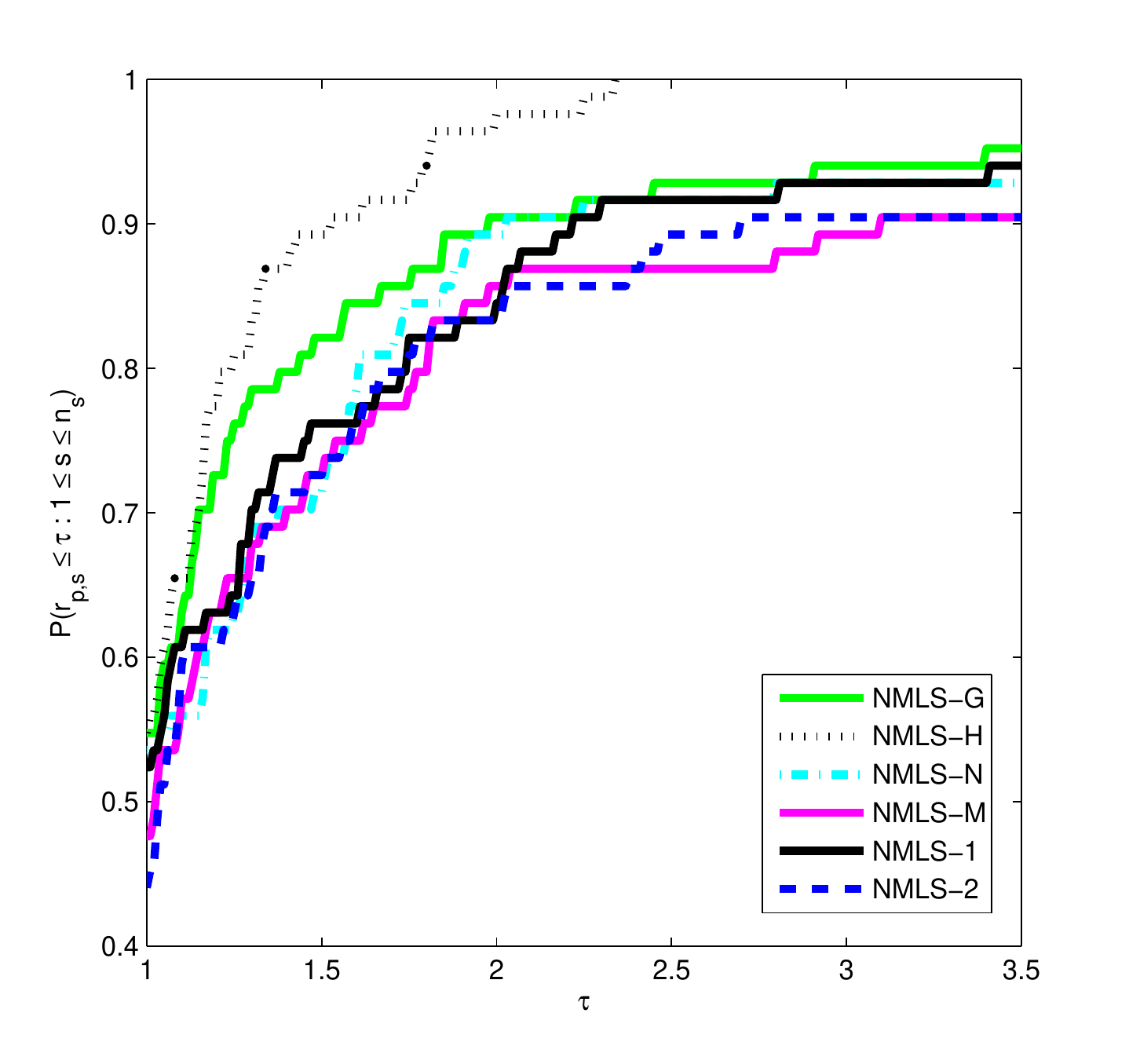}}%
 \qquad 
 \subfloat[][$N_f$ performance profile]{\includegraphics[width=7.7cm]{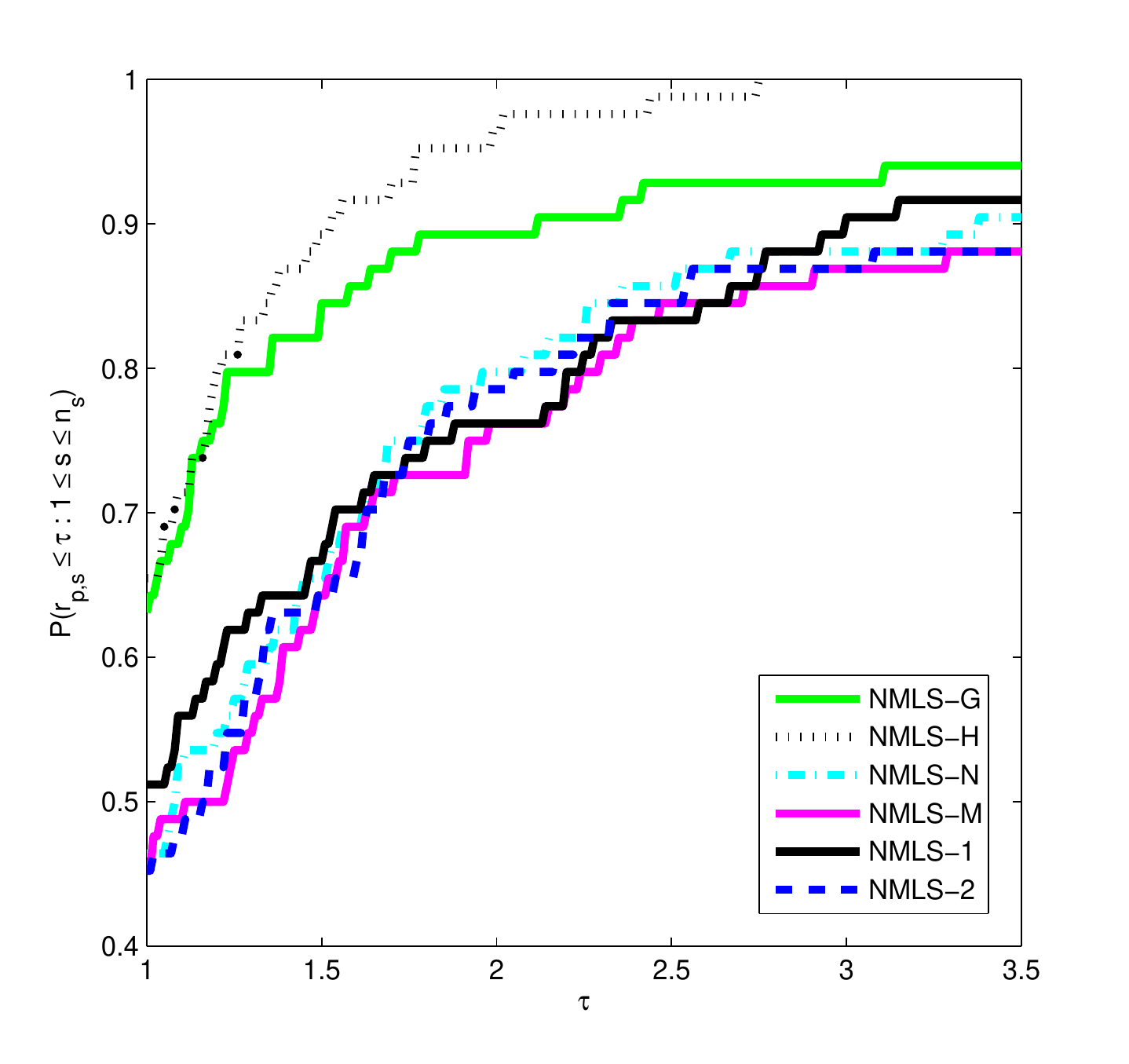}} 
 \qquad 
 \subfloat[][$N_f  +  3 N_g$ performance profile ($\tau = 1.5$)]{\includegraphics[width=7.7cm]{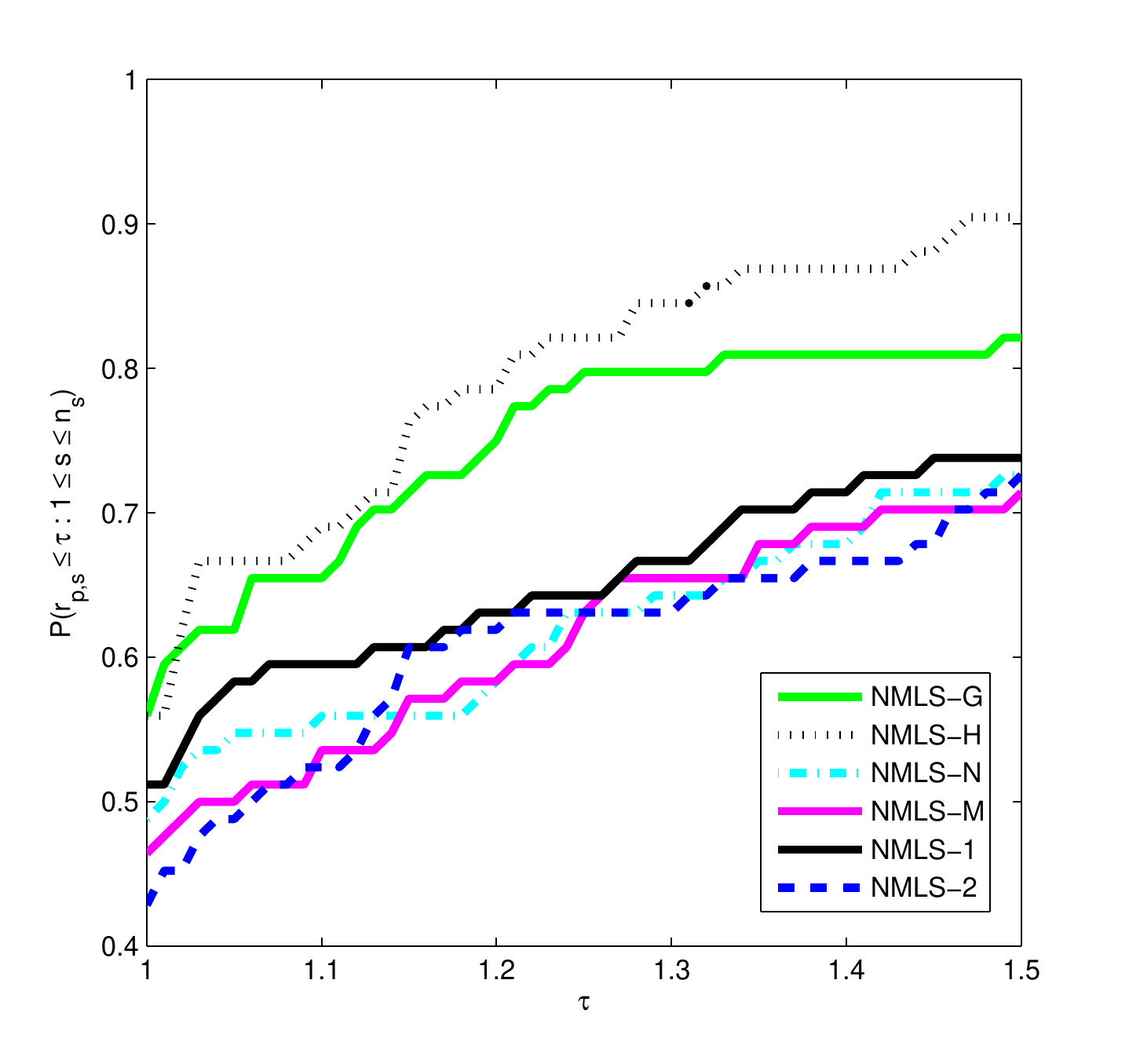}}%
 \qquad 
 \subfloat[][$N_f  +  3 N_g$ performance profile ($\tau = 3.5$)]{\includegraphics[width=7.7cm]{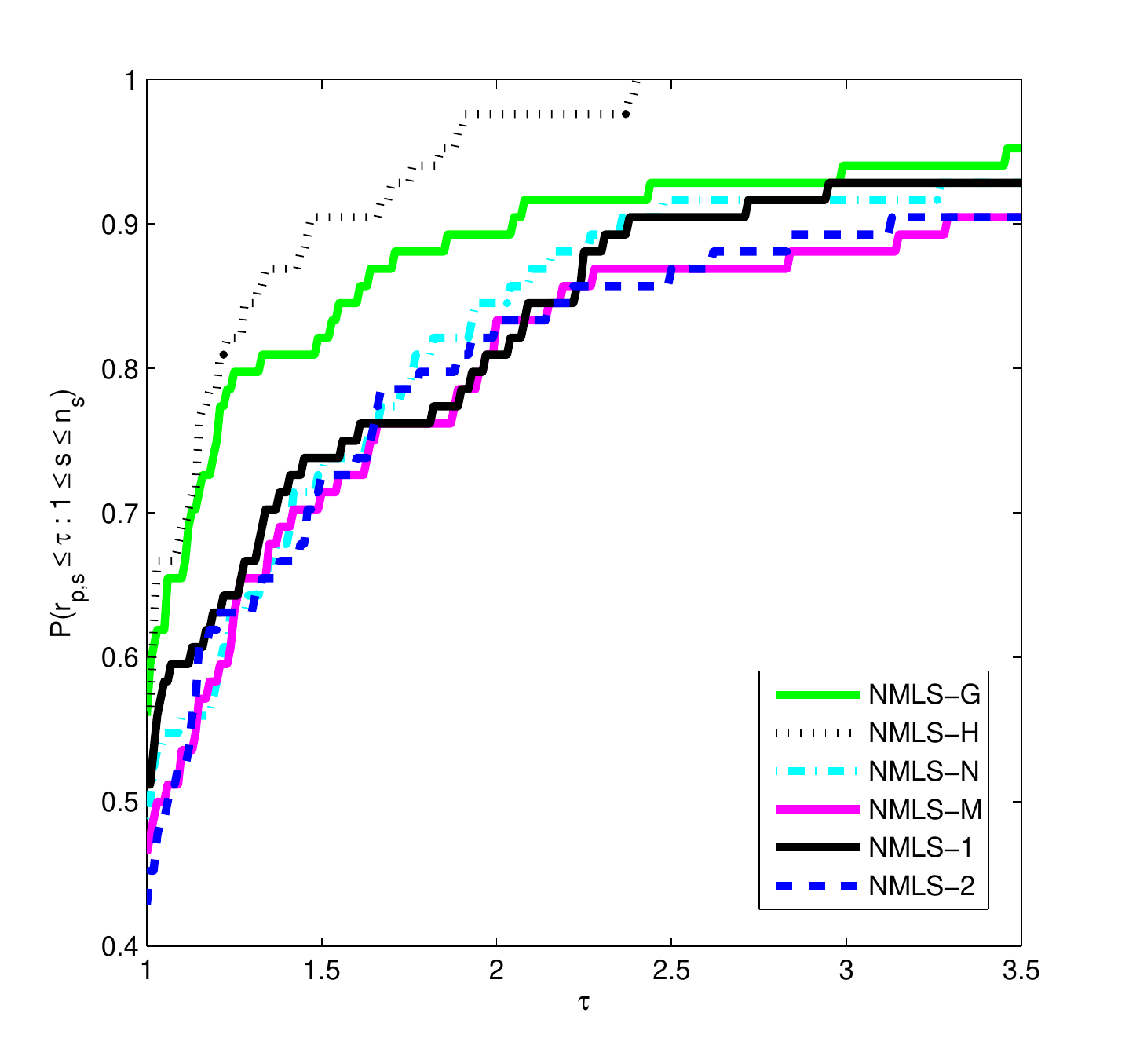}} 
 \caption{Performance profiles of all considered algorithms measured by: (a) the number of iterations ($N_i$) or gradient evaluations ($N_g$); (b) the number of function evaluations ($N_f$); (c) and (d) the hybrid measure $N_f  +  3 N_g$.} 
 \end{figure} 
 
Summarizing the results of this subsection, we see that the considered algorithms are comparable, however, NMLS-H and NMLS-G attain the best performance by using the direction (\ref{e.mdbb1}), while NMLS-H, NMLS-G and NMLS-1 outperform the others by employing the direction (\ref{e.mdbb2}). 

\subsection{Image deblurring/denoiding}
Image blur is a common problem that frequently happens in the photography and often can ruin the photograph. In digital photography, the motion blur is caused by camera shakes, which is unavoidable in many situations. Hence image deblurring/denoising is one of the fundamental tasks in the 
context of digital imaging processing, aiming at recovering an image from
a blurred/noisy observation. The problem is typically modelled as linear
inverse problem 
\begin{equation}\label{e.inv}
y = Ax + \omega,~~~ x \in X,
\end{equation}
where $X$ is a finite-dimensional vector space, $A$ is a blurring linear operator, $x$ is a clean image, $y$ is an 
observation, and $\omega$ is either Gaussian or impulsive noise. 

The system of equations (\ref{e.inv}) is mostly underdetermined and ill-conditioned, 
and $\omega$ is not commonly available, so one is not able to solve it directly, see \cite{BerB, Neu}. 
Hence, its solution is generally approximated by an optimization problem of the form  
\begin{equation}\label{e.least}
\begin{array}{ll}
\mathrm{minimize}      &~~ \frac{1}{2} \| Ax - y \|_2^2 + \lambda \varphi(x)\\
\mathrm{subject ~ to}  &~~ x \in X,
\end{array}
\end{equation}
where $\varphi$ is a smooth or nonsmooth regularizer such as $\varphi(x) = \frac{1}{2} \|x\|_2^2$,
$\varphi(x) = \|x\|_1$, $\varphi(x) = \|x\|_{ITV}$, or $\varphi(x) = \|x\|_{ATV}$ in which $\|.\|_{ITV}$ and $\|.\|_{ATV}$ denote isotropic and anisotropic total variation, for more information see \cite{Aho, ChaCCNP} and references therein. Among these regularizers, $\varphi(x) = \frac{1}{2} \|x\|_2^2$ is differentiable and the others are nondifferentiable. Therefore, by the aim of this paper to study differentiable objective function, we consider the next problem
\begin{equation}\label{e.l22l22}
\begin{array}{ll}
\mathrm{minimize}      &~~ \frac{1}{2} \| Ax - y \|_2^2 + \frac{\lambda}{2} \|Wx\|_2^2\\
\mathrm{subject ~ to}  &~~ x \in \mathbb{R}^n,
\end{array}
\end{equation}
where $A, W \in \mathbb{R}^{m \times n}$ and $y \in \mathbb{R}^m$. It is assumed that $A^T A + \lambda W^T W$ is positive definite, i.e., the problem (\ref{e.l22l22}) is a strictly convex problem and has the unique optimizer $x^* \in \mathbb{R}^n$ for an arbitrary vector $y$.

We now consider the recovery of the $256 \times 256$ blurred/noisy Lena image by minimizing the problem (\ref{e.l22l22}) using NMLS-G, NMLS-H, NMLS-N, NMLS-M, NMLS-1 and NMLS-2 with the search direction (\ref{e.mdbb2}). The algorithms stopped after 25 iterations. In particular, we choose the blurring matrix $A \in \mathbb{R}^{n \times n}$ to be the out-of-focus blur with radius 3 and the regularization matrix $W$ to be the gradient matrix for the problem (\ref{e.l22l22}). Thus, the matrix $W^T W$ is the two-dimensional discrete Laplacian matrix. For both matrices, we exploit the Neumann boundary conditions, which usually gives less artifacts at the boundary, see \cite{MorPC,NgCT}. The use of such boundary conditions means that $A^T A + \lambda W^T W$ is a block-Toeplitz-plus-Hankel matrix with Toeplitz-plus-Hankel blocks. The original and blurred/noisy version of Lena are demonstrated in Figure 10, and the recovered images by the considered algorithms of this image are depicted in Figure 12. 

\begin{figure}[h] \label{f.deb}
\centering
\subfloat[][Original image]{\includegraphics[width=7.5cm]{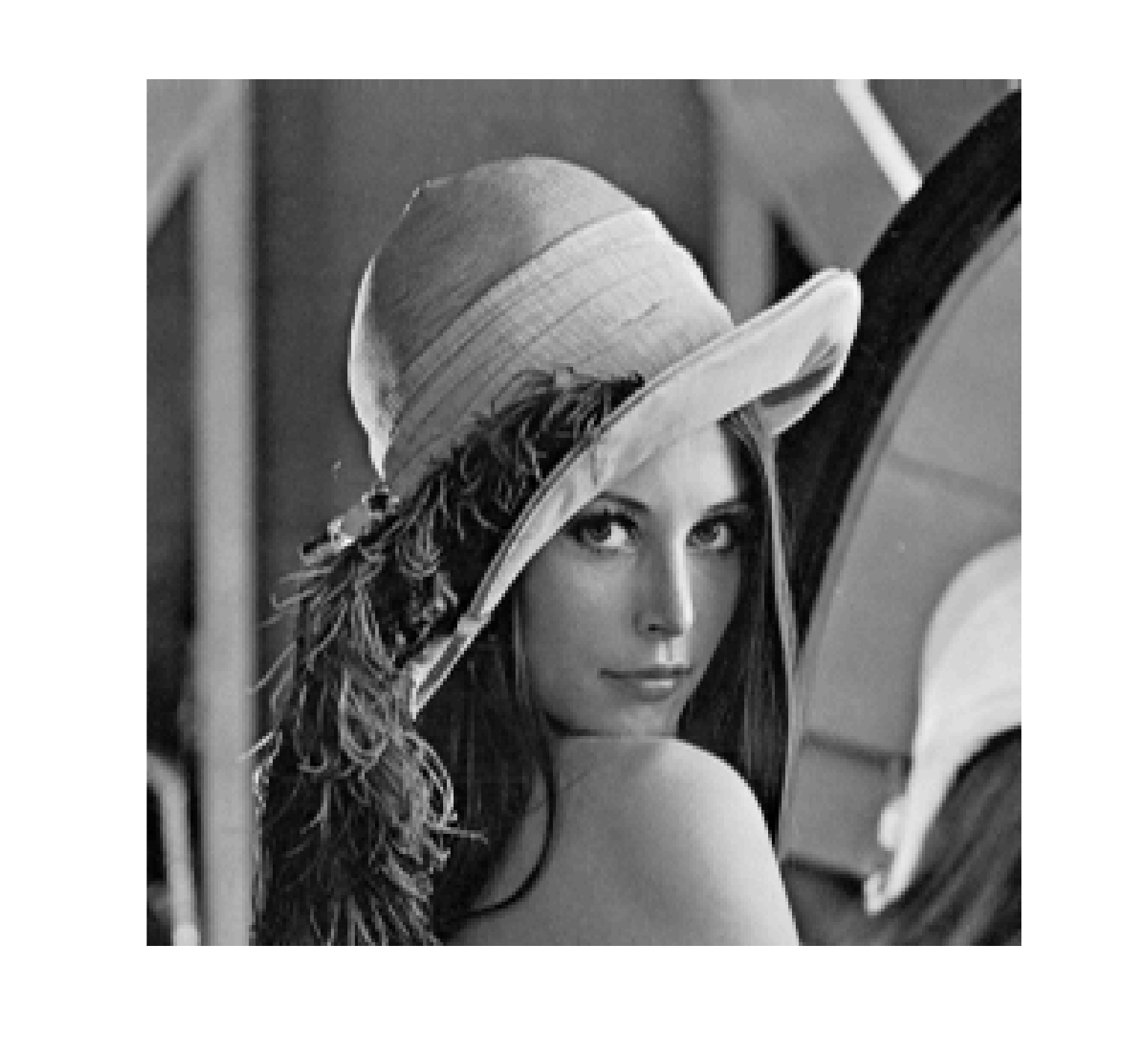}}%
\qquad
\subfloat[][Blurred/noisy image]{\includegraphics[width=7.5cm]{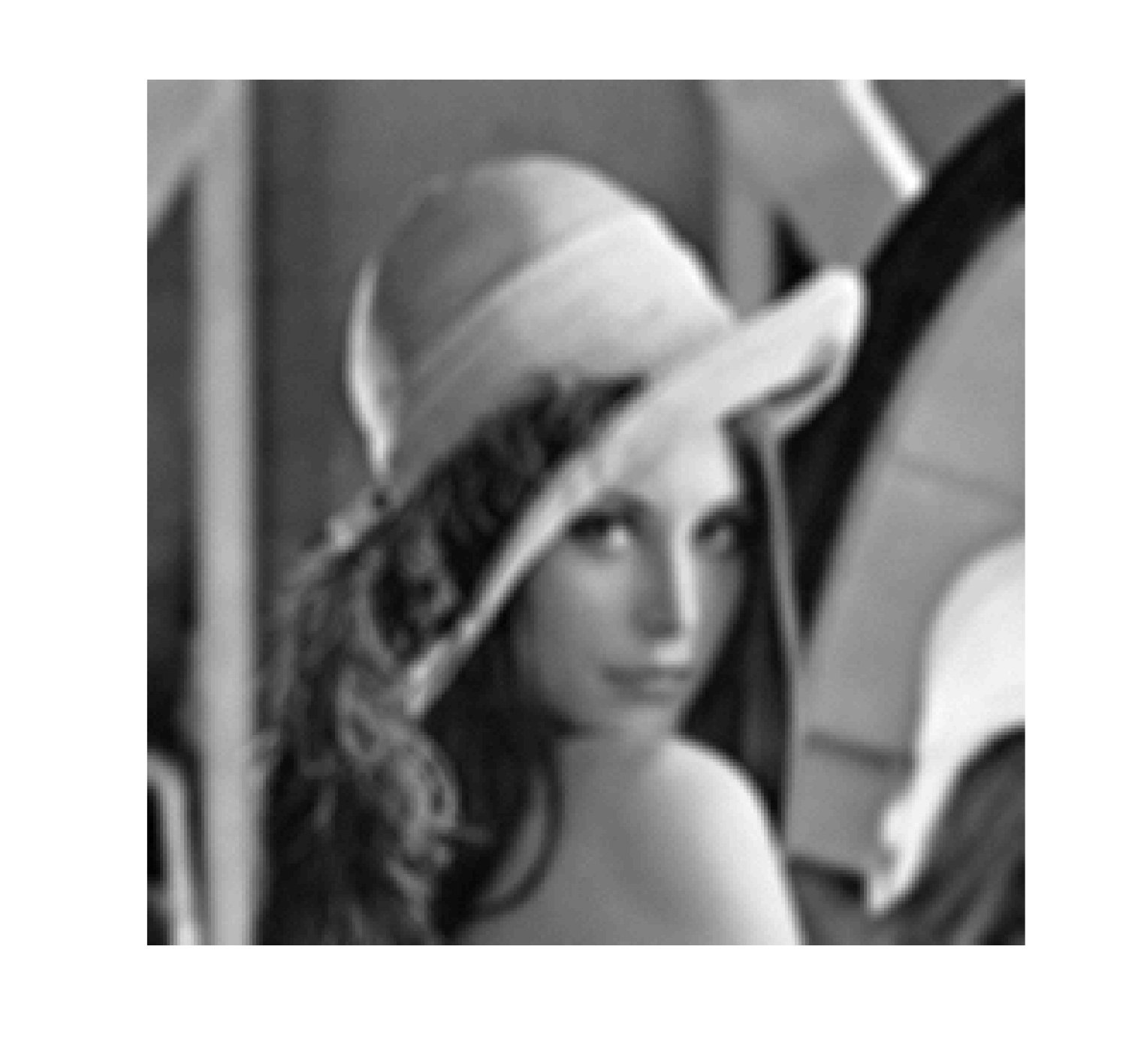}}
\caption{The $256 \times 256$ original and blurred/noisy Lena images}%
\end{figure}

To see details of this experiment, we compare the function values and signal-to-noise improvement (ISNR) for the algorithms in Figure 11, where ISNR is defined by
\[
\mathrm{ISNR} = 20 \log_{10} \left( \frac{\|y - x_0\|_2}{\|x_b - x_0\|_2} \right),
\]
where $y$ is the observed image. Generally, this ratio measures the quality of the restored image $x_b$ relative to the blurred/noisy observation $y$. The subfigure (a) of Figure 11 shows that the algorithms perform comparable, while the subfigure (b) of Figure 11 indicates that NMLS-1 and NMLS-2 outperform the other algorithms regarding ISNR.

\begin{figure}[h] \label{f.deb2}
\centering
\subfloat[][ rel. vs. iterations]{\includegraphics[width=7.5cm]{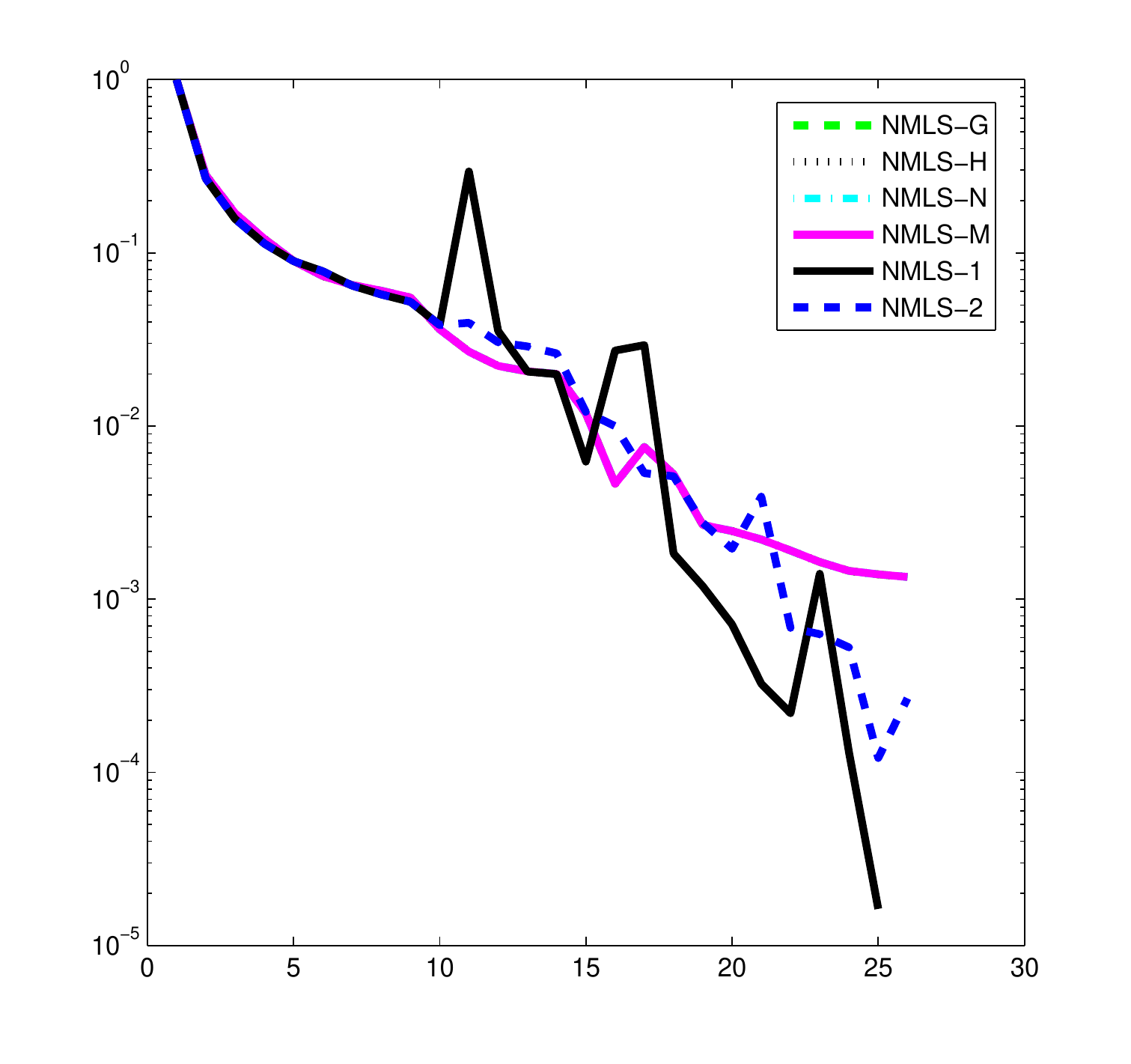}}%
\qquad
\subfloat[][ISNR vs. iterations]{\includegraphics[width=7.5cm]{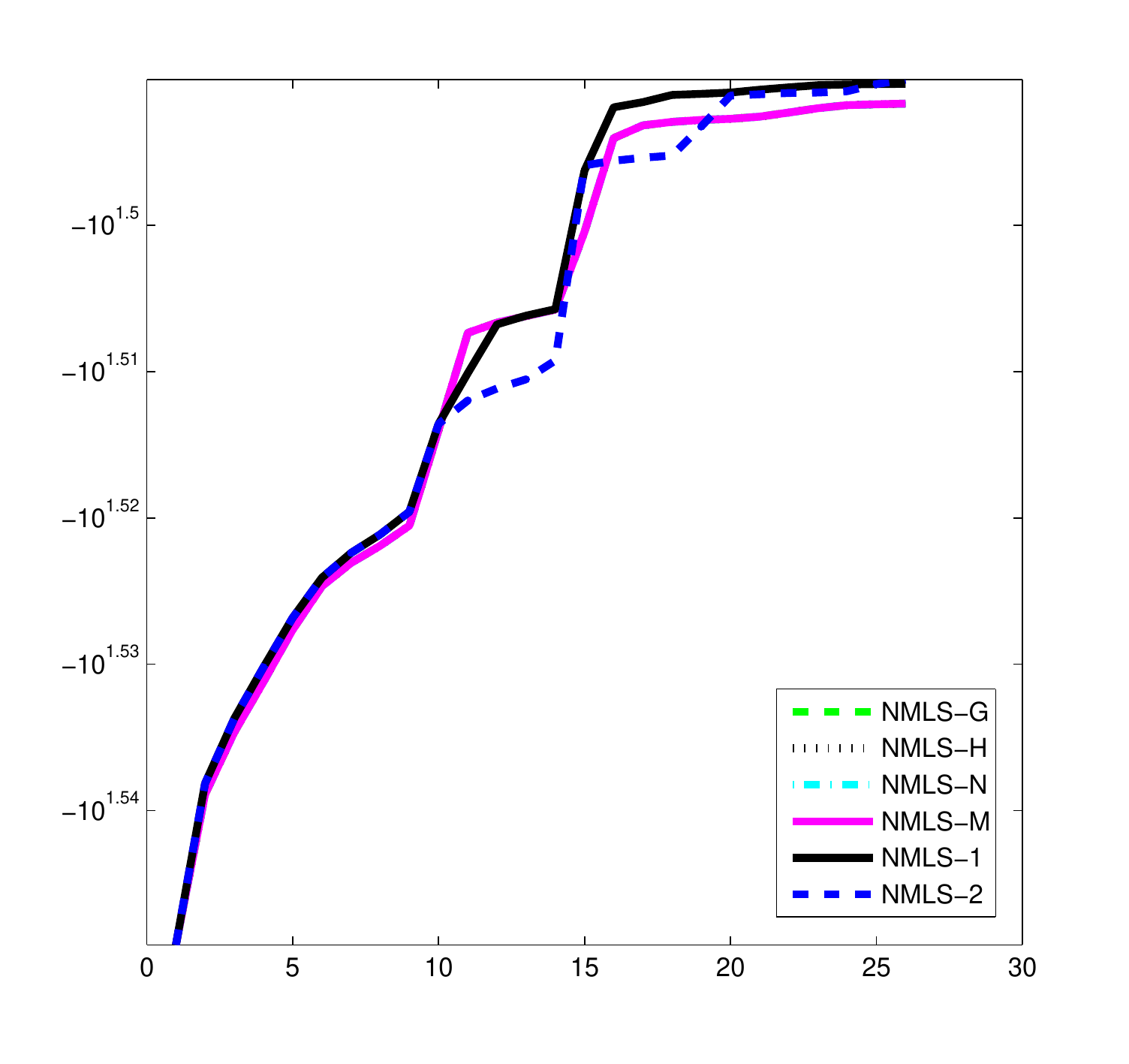}}
\caption{Deblurring of the $256 \times 256$ blurred/noisy image by NMLS-G, NMLS-H, NMLS-N, NMLS-M, NMLS-1 and NMLS-2, where $rel.~ := (f_k - f^*)/(f_0 - f^*)$ is the relative error of function values.}%
\end{figure}

From Figure 12, it is observed that the algorithms recover the image in acceptable quality, where the last function value and PSNR are also reported. Here peak signal-to-noise (PSNR) is defined by
\[
\mathrm{PSNR} = 20 \log_{10} \left( \frac{255 n}{\|x_b - x_0\|_2} \right),
\]
where $x_b$ is the approximated solution of (\ref{e.l22l22}) and $x_0$ is an initial point. This ratio is a common measure to assess the quality of the restored image $x_b$, i.e., it implies that NMLS-1 and NMLS-2 recover the blurred/noisy image better than the others.

\begin{figure}[h] \label{f.deb1}
\centering
\subfloat[][NMLS-G: $f = 50137.53,~ \mathrm{PSNR} = 29.79$]{\includegraphics[width=7.5cm]{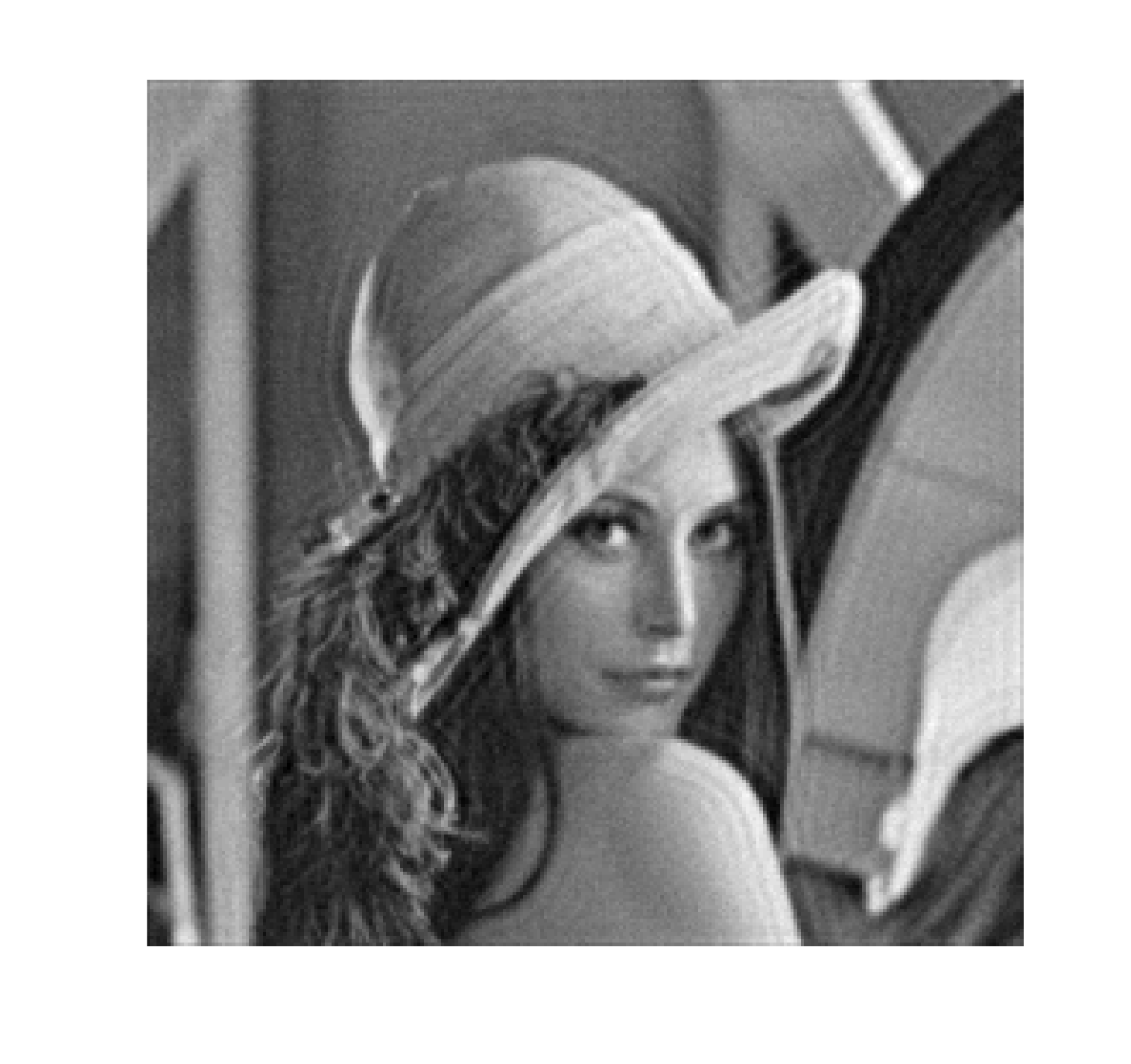}}%
\qquad
\subfloat[][NMLS-H: $f = 50137.53,~ \mathrm{PSNR} = 29.79$]{\includegraphics[width=7.5cm]{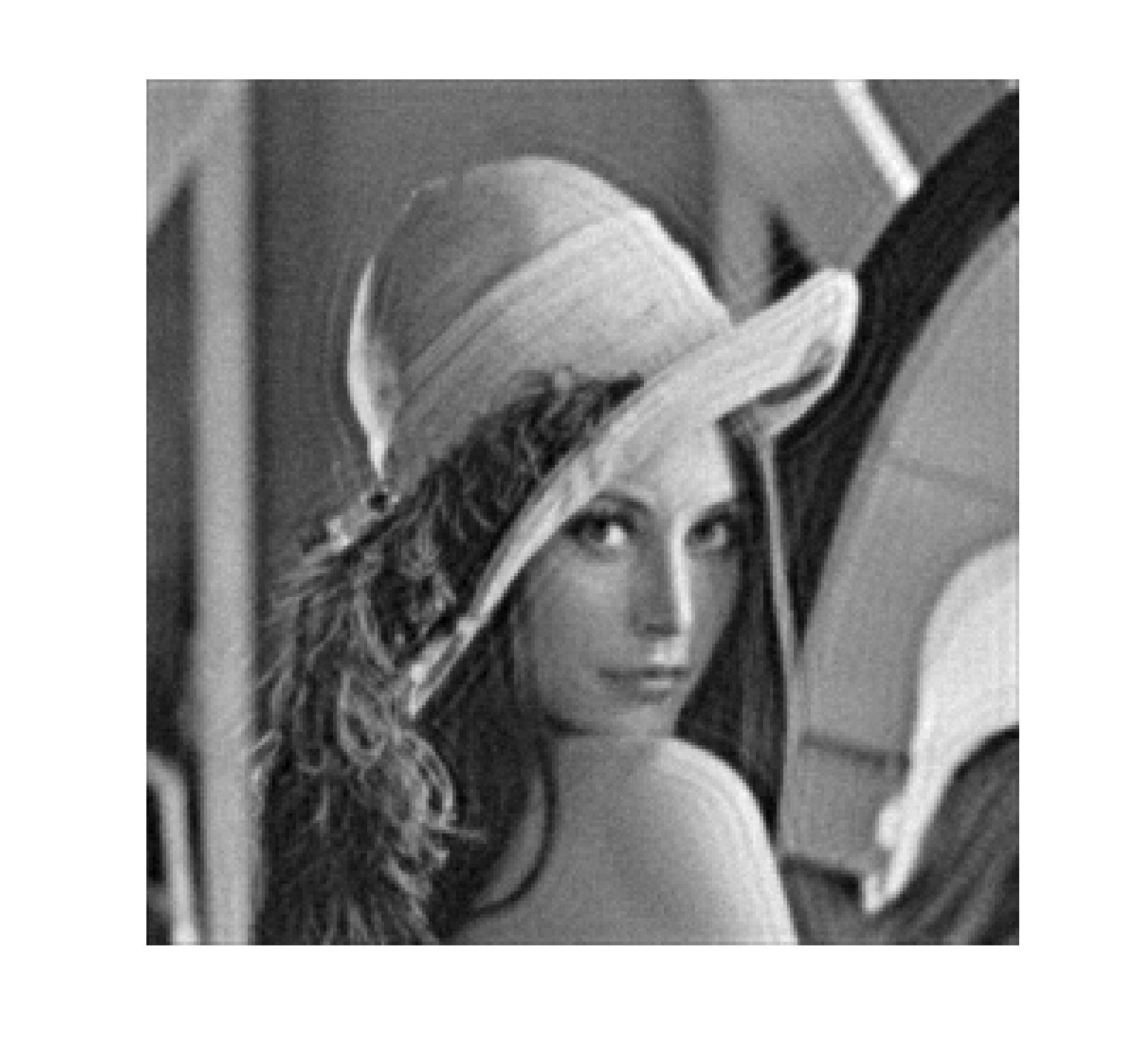}}
\qquad
\subfloat[][NMLS-N: $f = 50137.53,~ \mathrm{PSNR} = 29.79$]{\includegraphics[width=7.5cm]{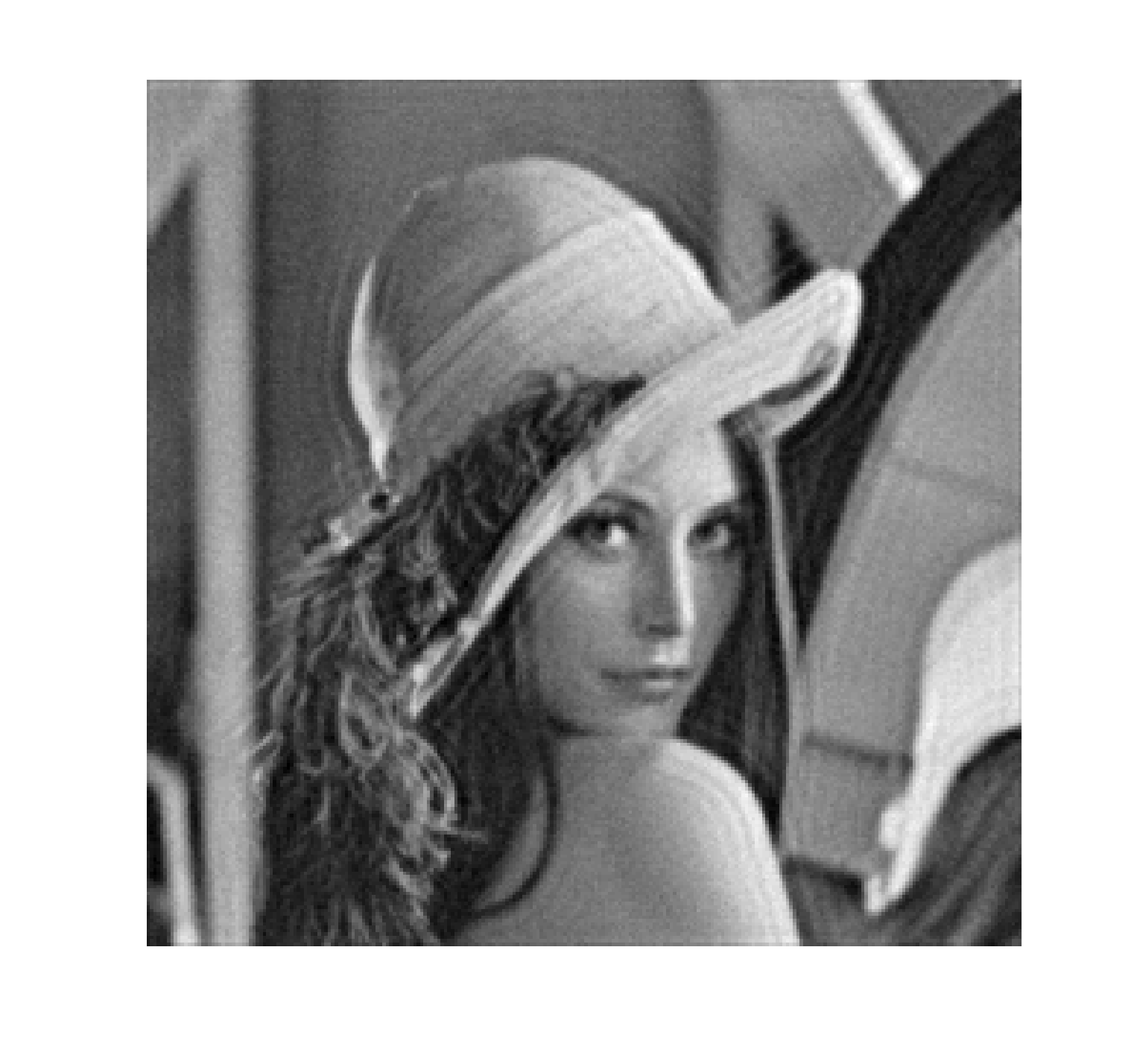}}%
\qquad
\subfloat[][NMLS-M: $f = 50137.53,~ \mathrm{PSNR} = 29.79$]{\includegraphics[width=7.5cm]{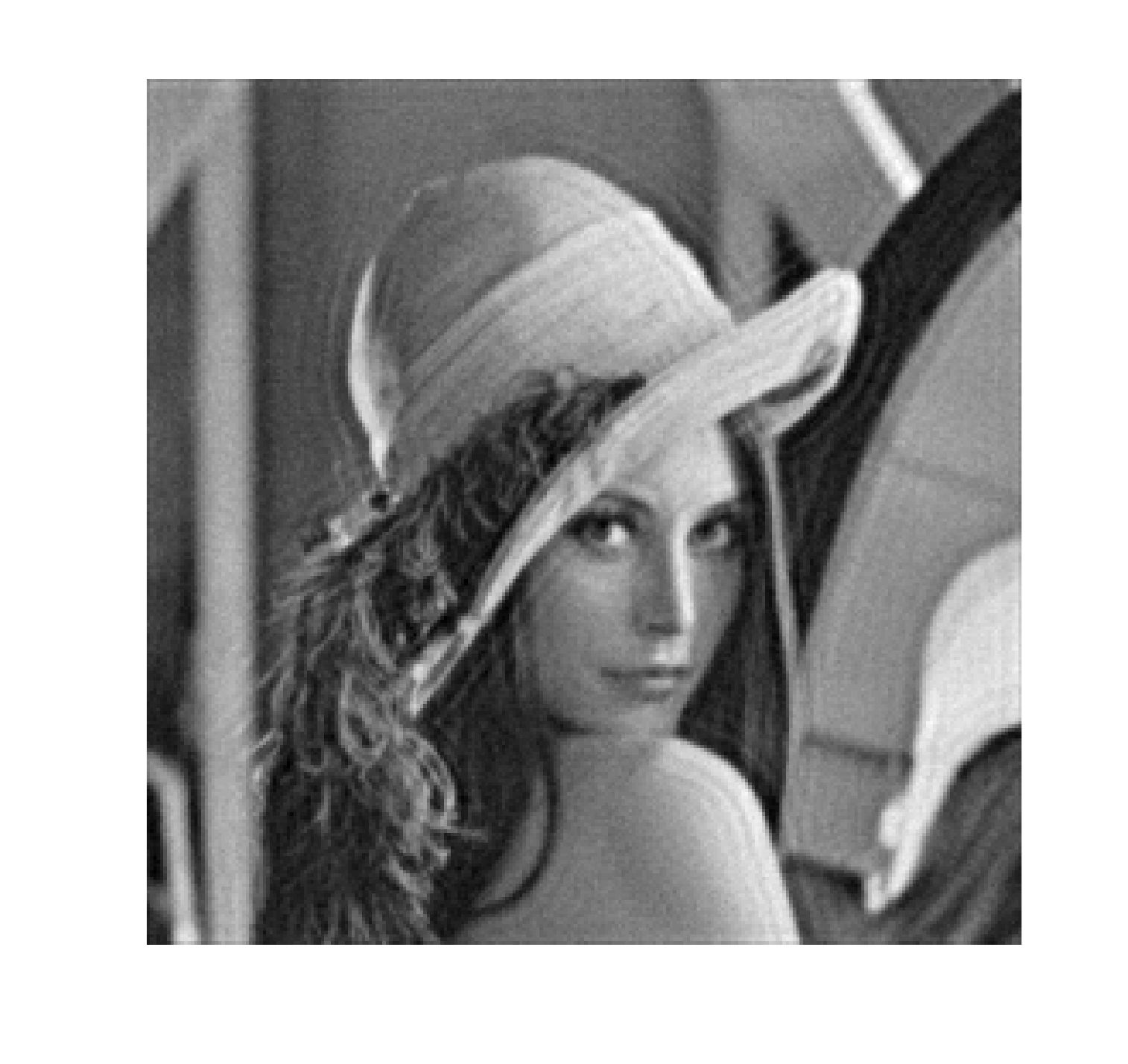}}
\qquad
\subfloat[][NMLS-1: $f = 49445.59,~ \mathrm{PSNR} = 29.89$]{\includegraphics[width=7.5cm]{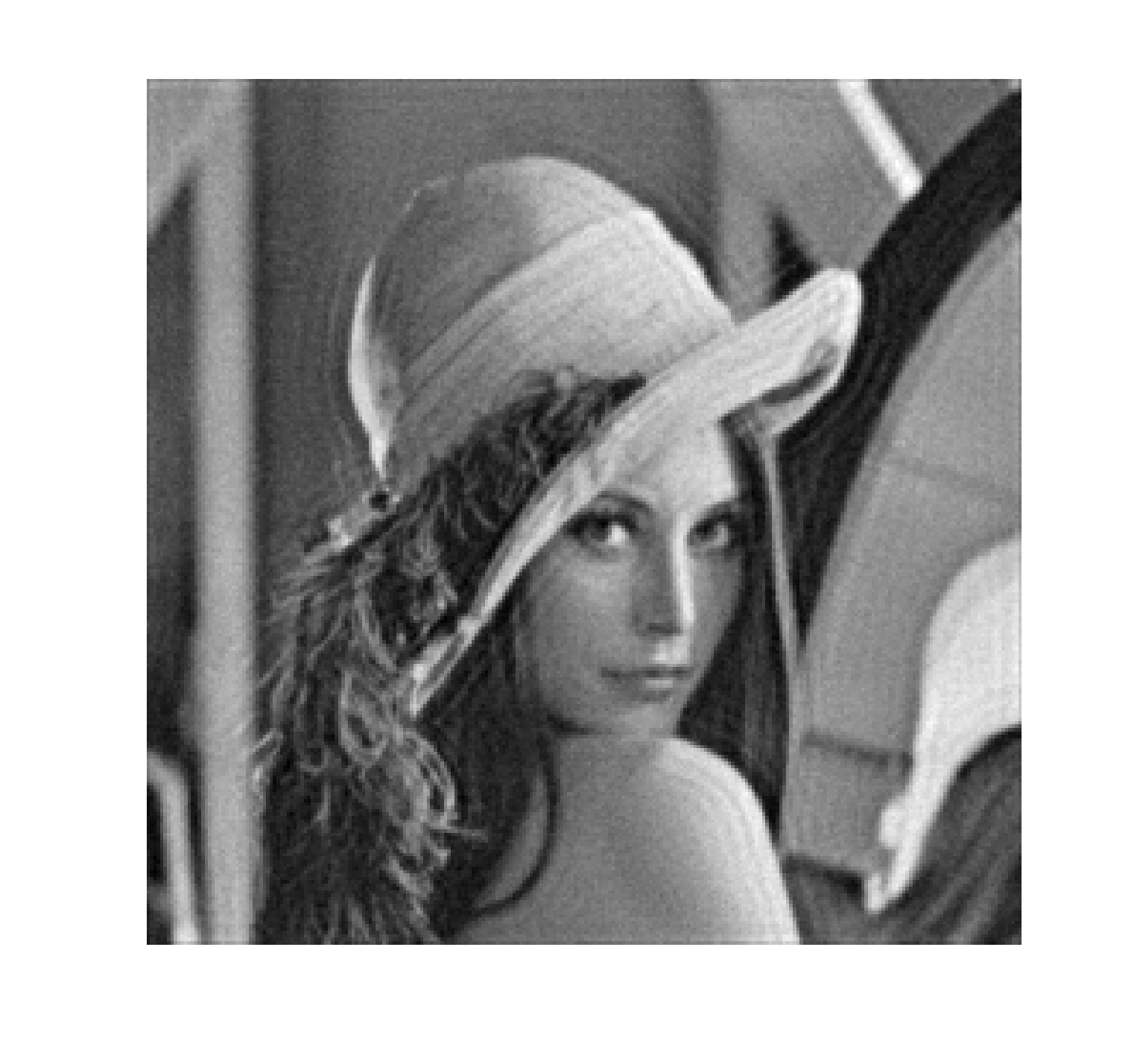}}%
\qquad
\subfloat[][NMLS-2: $f = 49600.29,~ \mathrm{PSNR} = 29.90$]{\includegraphics[width=7.5cm]{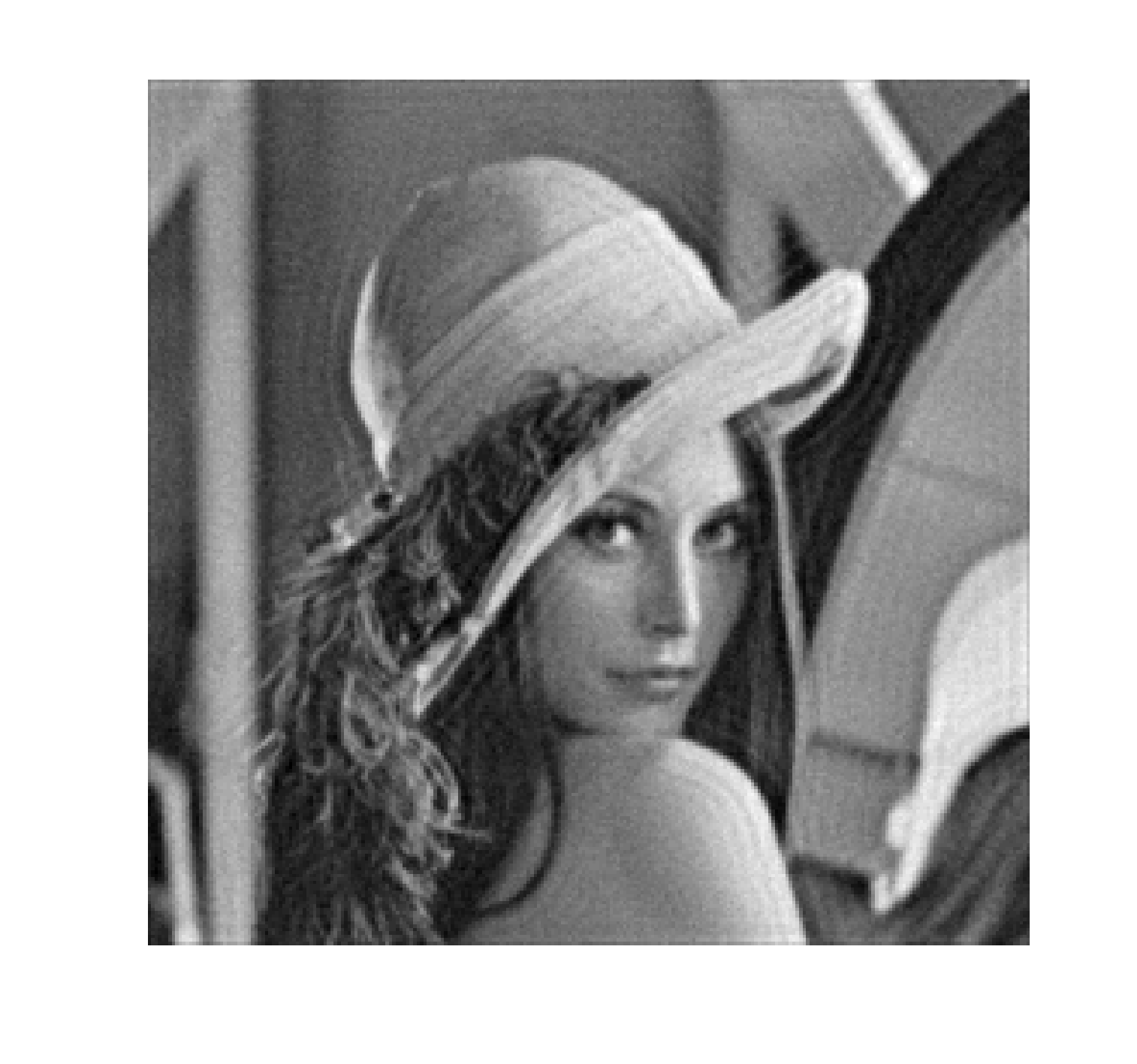}}
\caption{Deblurring of the $256 \times 256$ blurred/noisy image by NMLS-G, NMLS-H, NMLS-N, NMLS-M, NMLS-1 and NMLS-2. The algorithms were stopped after 25 iterations.}%
\end{figure}

\section {Conclusions and perspectives}
This study first describes the motivation behind nonmonotone schemes, reviews the most popular nonmonotone terms and investigates the efficiency of them when they are incorporated into a backtracking
Armijo-type line search in presence of some search directions. In particular, we propose two novel nonmonotone terms, combine them with Armijo's rule and study their convergence. Afterwards, we report extensive numerical results and comparison among the two proposed nonmonotone schemes and some state-of-the-art nonmonotone line searches. The reported numerical results by using some measures of efficiency show that the performance of the considered nonmonotone line searches are varied depends on choosing search directions. Finally, employing the nonmonotone Armijo line searches for solving the dblurring problem produces acceptable results.

The experiments of this paper are limited to unconstrained optimization problems, however, the same experiments can be done for bound-constrained or general constrained optimization problems. We here consider only an Armijo-type line search, but one can investigate more numerical experiments with Wolfe-type or Goldestein-type line searches. For example studying of the behaviour of nonmonotone Wolfe-type line searches using conjugate-gradient directions is interesting. Furthermore, much more experiments on the parameters $ \rho $ and $ \delta $ for nonmonotone Armijo-type line searches can be done. It is also possible to extend our experiments to trust-region methods. One can consider many more applications with convex or nonconvex objective functions in the context of signal and image processing, machine learning, statistics and so on, which is out of the scope of this study. \\\\
\textbf{Acknowledgement.} We would like to thank {\sc Benedetta Morini} that generously makes the codes of the paper \cite{MorPC} available for us. \\\\
\textbf{Appendix: Tables 1--5}
\renewcommand{\arraystretch}{0.92} 
\begin{landscape} 
\small 
\begin{longtable}{llllllllllllllll} 
\multicolumn{5}{l} 
{Table 1. Numerical results with Newton's direction}\\ 
\cmidrule{1-14} 

\multicolumn{1}{l}{}        &\multicolumn{1}{l}{} 
&\multicolumn{1}{l}{NMLS-G}  &\multicolumn{1}{l}{}\hspace{10mm} 
&\multicolumn{1}{l}{NMLS-H} &\multicolumn{1}{l}{}\hspace{10mm} 
&\multicolumn{1}{l}{NMLS-N}  &\multicolumn{1}{l}{} \hspace{10mm} 
&\multicolumn{1}{l}{NMLS-M}  &\multicolumn{1}{l}{}\hspace{10mm} 
&\multicolumn{1}{l}{NMLS-1} &\multicolumn{1}{l}{}\hspace{10mm} 
&\multicolumn{1}{l}{NMLS-2}   &\multicolumn{1}{l}{} \\ 
\cmidrule(lr){3-4} \cmidrule(lr){5-6} \cmidrule(lr){7-8} \cmidrule(lr){9-10} \cmidrule(lr){11-12} \cmidrule(lr){13-14} 
 
\multicolumn{1}{l}{Problem name}      &\multicolumn{1}{l}{Dimension} 
 
&\multicolumn{1}{l}{$N_i$}   &\multicolumn{1}{l}{$N_f$}
&\multicolumn{1}{l}{$N_i$}   &\multicolumn{1}{l}{$N_f$}
&\multicolumn{1}{l}{$N_i$}   &\multicolumn{1}{l}{$N_f$}
&\multicolumn{1}{l}{$N_i$}   &\multicolumn{1}{l}{$N_f$}
&\multicolumn{1}{l}{$N_i$}   &\multicolumn{1}{l}{$N_f$}
&\multicolumn{1}{l}{$N_i$}   &\multicolumn{1}{l}{$N_f$} \\  
\cmidrule{1-14} 
\endfirsthead 
\multicolumn{5}{l}%
{Table 1. Numerical results (\textit{continued})}\\[5pt] 
\hline 
\endhead 
\hline 
\endfoot 
\endlastfoot 
Beale  &2&17&25&14&27&17&25&14&27&13&22&16&28\\ 
Brown badly scaled  &2&7&113&7&113&7&113&7&113&7&113&7&113\\ 
Powell badly scaled &2&4&89&4&89&4&89&4&89&4&89&4&89\\ 
Variably Dim &2&7&8&7&8&7&8&7&8&7&8&7&8\\ 
Watson  &2&4&5&4&5&4&5&4&5&4&5&4&5\\
Box three-dim &3&8&9&8&9&8&9&8&9&8&9&8&9\\ 
Gaussian &3&1&2&1&2&1&2&1&2&1&2&1&2\\ 
Gulf r . and d. &3&39&46&43&47&39&46&37&47&41&55&39&46\\  
Helical valley &3&21&22&20&24&21&22&20&24&14&19&14&17\\ 
Bro . a . Dennis &4&11&66&11&66&11&66&11&66&11&66&11&66\\ 
E. Rosenbrock &4&11&16&15&23&11&16&15&23&18&27&15&22\\ 
E. Powell singular &4&15&16&15&16&15&16&15&16&15&16&15&16\\ 
Penalty I &4&16&17&16&17&16&17&16&17&16&17&16&17\\ 
Penalty  II &4&8&9&8&9&8&9&8&9&8&9&8&9\\ 
Trigonometric  &4&8&12&8&12&8&12&8&12&8&12&8&12\\ 
Wood  &4&29&33&27&31&29&33&31&43&33&48&29&38\\ 
Biggs EXP6 &6&13&18&17&24&23&33&15&34&15&34&21&31\\ 
Chebyquad &6&11&17&10&17&11&17&10&17&10&17&11&17\\ 
Penalty  II  &10&29&30&29&30&29&30&29&30&29&30&29&30\\ 
 
\hline
Average&&13.631&29.11&13.89&29.95&14.16&29.89&13.68&31.11&13.79&31.47&13.84&30.26\\
\hline
\end{longtable}

\renewcommand{\arraystretch}{0.92} 
\small 
\begin{longtable}{llllllllllllllll} 
\multicolumn{5}{l} 
{Table 2. Numerical results with the BFGS direction}\\ 
\cmidrule{1-14} 
\multicolumn{1}{l}{}        &\multicolumn{1}{l}{} 
&\multicolumn{1}{l}{NMLS-G}  &\multicolumn{1}{l}{}\hspace{10mm} 
&\multicolumn{1}{l}{NMLS-H} &\multicolumn{1}{l}{}\hspace{10mm} 
&\multicolumn{1}{l}{NMLS-N}  &\multicolumn{1}{l}{} \hspace{10mm} 
&\multicolumn{1}{l}{NMLS-M}  &\multicolumn{1}{l}{}\hspace{10mm} 
&\multicolumn{1}{l}{NMLS-1} &\multicolumn{1}{l}{}\hspace{10mm} 
&\multicolumn{1}{l}{NMLS-2}   &\multicolumn{1}{l}{} \\ 
\cmidrule(lr){3-4} \cmidrule(lr){5-6} \cmidrule(lr){7-8} \cmidrule(lr){9-10} \cmidrule(lr){11-12} \cmidrule(lr){13-14} 
\multicolumn{1}{l}{Problem name}      &\multicolumn{1}{l}{Dimension} 
&\multicolumn{1}{l}{$N_i$}   &\multicolumn{1}{l}{$N_f$}
&\multicolumn{1}{l}{$N_i$}   &\multicolumn{1}{l}{$N_f$}
&\multicolumn{1}{l}{$N_i$}   &\multicolumn{1}{l}{$N_f$}
&\multicolumn{1}{l}{$N_i$}   &\multicolumn{1}{l}{$N_f$}
&\multicolumn{1}{l}{$N_i$}   &\multicolumn{1}{l}{$N_f$}
&\multicolumn{1}{l}{$N_i$}   &\multicolumn{1}{l}{$N_f$} \\ 
\cmidrule{1-14} 
\endfirsthead 
\multicolumn{5}{l}%
{Table 2. Numerical results (\textit{continued})}\\[5pt] 
\hline 
\endhead 
\hline 
\endfoot 
\endlastfoot 
Beale  &2&17&24&17&24&24&24&17&24&16&24&17&24\\ 
Brown badly scaled  &2&18&60&14&57&43&68&48&72&48&72&48&72\\ 
Powell badly scaled &2&63&91&63&91&63&88&69&95&63&88&65&90\\ 
Variably Dim &2&5&13&5&13&5&13&5&13&5&13&5&13\\ 
Watson  &2&9&21&9&21&9&21&9&21&9&21&9&21\\
Box three-dim &3&30&39&30&39&30&39&30&39&30&39&30&39\\  
Gaussian &3&3&6&3&6&3&6&3&6&3&6&3&6\\ 
Gulf r . and d. &3&30&40&33&44&30&40&33&44&33&44&33&44\\ 
Helical valley &3&50&80&43&73&50&80&38&68&33&60&31&53\\ 
Bro . a . Dennis &4&26&80&24&79&26&80&24&79&24&79&24&79\\ 
E. Rosenbrock &4&73&89&72&94&73&89&62&85&56&85&70&96\\ 
E. Powell singular  &4&31&51&36&57&31&51&36&57&24&47&36&57\\ 
Penalty I &4&1258&1356&543&659&462&522&479&552&482&546&482&546\\ 
Penalty  II &4&14&21&14&21&14&21&14&21&12&20&14&21\\ 
Trigonometric  &4&14&15&13&15&14&15&13&15&13&15&13&15\\ 
Wood  &4&33&68&30&65&33&68&33&68&30&65&33&68\\ 
Biggs EXP6 &6&45&58&39&47&45&58&43&58&43&58&45&60\\ 
Chebyquad &6&19&32&19&32&19&32&18&32&19&32&18&33\\ 
Penalty  II  &10&1492&1682&1412&1727&509&786&524&781&514&767&648&982\\ 
\hline
Average&&248.46&294.31&186.08&243.38&114.08&161.62&115.23&163.85&112.08&160.08&124.92&178.38\\         
 \hline 
 \end{longtable} 
 \end{landscape}
 

\renewcommand{\arraystretch}{0.97} 
\begin{landscape} 
\small 
\begin{longtable}{llllllllllllllll} 
\multicolumn{5}{l} 
{Table 3. Numerical results with the LBFGS direction}\\ 
\cmidrule{1-14} 
\multicolumn{1}{l}{}        &\multicolumn{1}{l}{} 
&\multicolumn{1}{l}{NMLS-G}  &\multicolumn{1}{l}{}\hspace{10mm} 
&\multicolumn{1}{l}{NMLS-H} &\multicolumn{1}{l}{}\hspace{10mm} 
&\multicolumn{1}{l}{NMLS-N}  &\multicolumn{1}{l}{} \hspace{10mm} 
&\multicolumn{1}{l}{NMLS-M}  &\multicolumn{1}{l}{}\hspace{10mm} 
&\multicolumn{1}{l}{NMLS-1} &\multicolumn{1}{l}{}\hspace{10mm} 
&\multicolumn{1}{l}{NMLS-2}   &\multicolumn{1}{l}{} \\ 
\cmidrule(lr){3-4} \cmidrule(lr){5-6} \cmidrule(lr){7-8} \cmidrule(lr){9-10} \cmidrule(lr){11-12} \cmidrule(lr){13-14} 
\multicolumn{1}{l}{Problem name}      &\multicolumn{1}{l}{Dimension} 

&\multicolumn{1}{l}{$N_i$}   &\multicolumn{1}{l}{$N_f$} 
&\multicolumn{1}{l}{$N_i$}   &\multicolumn{1}{l}{$N_f$}
&\multicolumn{1}{l}{$N_i$}   &\multicolumn{1}{l}{$N_f$} 
&\multicolumn{1}{l}{$N_i$}   &\multicolumn{1}{l}{$N_f$} 
&\multicolumn{1}{l}{$N_i$}   &\multicolumn{1}{l}{$N_f$}
&\multicolumn{1}{l}{$N_i$}   &\multicolumn{1}{l}{$N_f$}\\ 
\cmidrule{1-14} 
\endfirsthead 
\multicolumn{5}{l}%
{Table 3. Numerical results (\textit{continued})}\\[5pt] 
\hline 
\endhead 
\hline 
\endfoot 
\endlastfoot 
Beale &2&21&26&21&26&21&26&21&26&21&26&21&26\\ 
Brown badly scaled  &2&11&15&11&15&11&15&11&15&11&15&11&15\\ 
Full Hess .  FH1 &2&31&37&31&36&31&39&31&36&31&36&31&36\\ 
Full Hess .  FH2 &2&7&10&7&10&7&10&7&10&7&10&7&10\\ 
Gaussian  &3&4&7&4&7&4&7&4&7&4&7&4&7\\ 
Box three-dim  &3&49&68&54&61&46&65&42&62&51&63&42&62\\ 
Gulf r .  and d.  &3&64&85&64&87&61&84&68&94&64&90&64&90\\ 
Staircase 1   &4&10&15&10&15&10&15&10&15&10&15&10&15\\ 
Staircase 2  &4&10&15&10&15&10&15&10&15&10&15&10&15\\ 
Bro .  a .  Dennis  &4&31&50&31&50&31&50&31&50&31&50&31&50\\ 
wood  &4&66&92&90&119&78&101&49&68&64&102&48&68\\ 
Biggs EXP66  &6&72&117&67&93&70&105&70&116&62&92&69&115\\ 
Penalty II	  &10&767&1561&525&1155&420&1068&541&1255&430&1012&538&1215\\ 
Variably Dim  &10&15&37&15&37&15&37&15&37&15&37&15&37\\ 
E .  Powell Sin &16&62&75&85&162&61&72&61&72&57&69&57&69\\ 
Watson  &31&1704&2037&1813&2377&754&1033&1844&2480&788&1144&764&1039\\ 
HARKERP2  &50&8917&9824&50001&1461198&9722&27035&9430&15756&8233&13643&9521&12691\\ 
ARGLINB  &100&11&49&11&49&3&25&3&25&3&25&3&25\\ 
Diagonal 3  &100&55&62&55&62&55&62&55&62&55&62&55&62\\ 
E .  Rosenbrock  &100&98&152&91&174&90&131&82&179&81&178&81&178\\ 
Diagonal 2 &500&97&98&97&98&97&98&97&98&97&98&85&87\\ 
DIXON3DQ &1000&2016&2019&2016&2019&1739&1772&1969&1979&2001&2016&2001&2016\\ 
E .  Beal &1000&13&16&13&16&13&16&13&16&13&16&13&16\\ 
Fletcher  &1000&21076&22370&4353&4626&4354&4710&4336&4843&3930&4447&3930&4447\\ 
G .  Rosenbrock  &1000&14358&21530&10241&15876&10438&16115&8516&14717&8376&14871&8375&14811\\ 
Par .  Pert .  Quadratic &1000&149&163&149&163&144&160&152&168&144&160&144&160\\ 
 Hager &1000&42&46&42&46&42&46&42&46&42&46&42&46\\ 
 BDQRTIC &1000&171&237&154&188&246&942&305&1889&196&727&196&727\\ 
BG2 &1000&23&34&23&34&23&34&23&34&23&34&23&34\\ 
POWER  &1000&9391&9412&9041&9063&8932&9029&7451&7518&8618&8706&8618&8706\\ 
Purt .  Trid. Quadratic &5000&579&592&579&592&577&595&558&576&581&603&581&603\\ 
Al .  Pert .  Quadratic  &5000&551&564&551&564&586&604&604&620&549&566&549&566\\ 
Pert .  Quadratic diag   &5000&684&847&836&1049&807&1146&728&931&739&980&756&1001\\ 
E .  Hiebert  &5000&2368&6491&2103&4723&2224&5184&2040&5119&2017&5229&1987&5067\\ 
Fletcher   &5000&12792&15151&50001&51822&25614&29246&18879&21076&17310&19750&17310&19750\\ 
E .  Trid .  1  &5000&22&25&22&25&22&25&22&25&22&25&22&25\\ 
NONSCOMP  &5000&928&1852&2235&4504&1778&4391&1653&3832&1566&3684&1878&4596\\ 
LIARWHD  &5000&63&114&71&91&60&112&41&65&41&65&42&67\\ 
CUBE  &5000&2722&5079&9060&18506&8910&22743&7743&18930&8433&20787&8433&20787\\ 
TRIDIA  &5000&1904&1917&1904&1917&1673&1695&1901&1927&1641&1666&1641&1666\\ 
DIXMAANA  &9000&10&14&10&14&10&14&10&14&10&14&10&14\\ 
DIXMAANB   &9000&8&12&8&12&8&12&8&12&8&12&8&12\\ 
DIXMAANC   &9000&9&14&9&14&9&14&9&14&9&14&9&14\\ 
DIXMAAND  &9000&10&16&10&16&10&16&10&16&10&16&10&16\\ 
DIXMAANE  &9000&376&380&376&380&390&403&382&387&382&387&382&387\\ 
DIXMAANF &9000&268&272&268&272&251&258&261&268&261&268&261&268\\ 
DIXMAANG &9000&229&234&229&234&229&237&229&235&223&230&223&230\\ 
DIXMAANH &9000&223&229&223&229&233&243&229&236&229&236&229&236\\ 
DIXMAANI  &9000&2798&2802&2798&2802&2653&2698&2801&2818&2751&2780&2751&2780\\ 
DIXMAANJ   &9000&420&424&420&424&431&437&433&439&433&440&433&440\\ 
DIXMAANK    &9000&361&366&361&366&377&385&361&366&361&366&361&366\\ 
DIXMAANL   &9000&321&327&321&327&340&350&321&327&321&327&321&327\\ 
ARWHEAD &10000&13&30&13&30&13&30&13&30&13&30&13&30\\ 
BDEXP  &10000&34&35&34&35&34&35&34&35&34&35&34&35\\ 
Broyden Tridiagonal  &10000&50&55&53&59&50&55&50&55&53&59&50&55\\ 
COSIN   &10000&26&28&15&19&26&28&15&19&15&19&15&19\\ 
Diagonal 4 &10000&6&13&6&13&6&13&6&13&6&13&6&13\\ 
Diagonal 5   &10000&5&6&5&6&5&6&5&6&5&6&5&6\\ 
Ext .  Tridiagonal 2  &10000&29&32&29&32&30&36&29&32&29&32&29&32\\ 
Diagonal 7  &10000&5&7&5&7&5&7&5&7&5&7&5&7\\ 
Diagonal 8  &10000&5&8&5&8&5&8&5&8&5&8&5&8\\ 
DQDRTIC  &10000&10&19&10&19&10&19&10&19&10&19&10&19\\ 
ENGVAL1 &10000&45&74&37&51&37&51&37&51&37&54&27&42\\ 
EDENSCH &10000&22&27&22&27&22&27&22&27&22&27&22&27\\ 
Ext .  BD1 &10000&14&17&13&15&14&17&12&16&13&15&12&16\\ 
Ext .  DENSCHNB &10000&6&9&6&9&6&9&6&9&6&9&6&9\\ 
Ext .  DENSCHNF  &10000&14&23&14&23&14&23&14&23&14&23&14&23\\ 
Ext .  Himmelblau &10000&9&16&9&16&9&16&9&16&9&17&9&16\\ 
Ext .  Maratos  &10000&219&558&221&452&228&473&192&436&188&424&205&479\\ 
Ext .  Powell   &10000&81&94&91&168&81&94&74&85&71&83&71&83\\ 
Ext .  Penalty  &10000&111&160&173&353&115&137&115&137&115&137&115&137\\ 
Ext .  PSC1   &10000&11&17&11&17&11&17&11&17&11&17&11&17\\ 
Ext .  QP1 &10000&25&41&25&41&25&41&25&41&25&41&25&41\\ 
Ext .   QP2  &10000&132&199&88&121&107&200&110&199&99&189&99&189\\ 
Ext .   TET  &10000&9&13&9&13&9&13&9&13&9&13&9&13\\ 
Ext . Wood  &10000&68&94&90&115&79&102&50&69&66&104&50&70\\ 
Ext .  White a .  Holst &10000&83&123&53&86&81&116&73&134&68&117&61&95\\ 
Ext .   Tridiagonal 1 &10000&22&25&22&25&22&25&22&25&22&25&22&25\\ 
  FH3  &10000&4&19&4&19&4&19&4&19&4&19&4&19\\ 
G .  PSC1  &10000&45&52&45&52&45&52&45&52&45&52&45&52\\ 
G .  Tridiagonal &10000&20&24&20&24&20&24&20&24&20&24&20&24\\ 
HIMMELBG   &10000&2&3&2&3&2&3&2&3&2&3&2&3\\ 
NONDQUAR   &10000&2081&2233&2055&2231&2167&2584&1973&2271&2043&2340&2043&2340\\ 
Perturbed  &10000&835&849&835&849&837&857&789&810&835&854&835&854\\ 
QUARTC   &10000&1&4&1&4&1&4&1&4&1&4&1&4\\ 
QF1 &10000&839&852&839&852&841&861&834&853&880&900&880&900\\ 
QF2  &10000&946&960&946&960&949&973&926&946&888&914&888&914\\ 
Raydan  &10000&7&8&7&8&7&8&7&8&7&8&7&8\\ 
SINCOS  &10000&11&17&11&17&11&17&11&17&11&17&11&17\\ 
TRIDIA &10000&2884&2898&2884&2898&3038&3079&3023&3054&2446&2479&2328&2355\\ 
\hline
 Average&&1063.51&1306.92&1780.31&17738.19&1040.96&1597.82&923.33&1327.82&0883.30&1282.16&900&1278.77\\
\hline
 
 \end{longtable} 
 \end{landscape}

 
 \renewcommand{\arraystretch}{1.04} 
 \begin{landscape} 
 \small 
 \begin{longtable}{llllllllllllllll} 
 \multicolumn{5}{l} 
 {Table 4. Numerical results with the Barzilai-Borwein direction (BB1)}\\ 
 \cmidrule{1-14} 
 \multicolumn{1}{l}{}        &\multicolumn{1}{l}{} 
 &\multicolumn{1}{l}{NMLS-G}  &\multicolumn{1}{l}{}\hspace{10mm} 
 &\multicolumn{1}{l}{NMLS-H} &\multicolumn{1}{l}{}\hspace{10mm} 
 &\multicolumn{1}{l}{NMLS-N}  &\multicolumn{1}{l}{} \hspace{10mm} 
 &\multicolumn{1}{l}{NMLS-M}  &\multicolumn{1}{l}{}\hspace{10mm} 
 &\multicolumn{1}{l}{NMLS-1} &\multicolumn{1}{l}{}\hspace{10mm} 
 &\multicolumn{1}{l}{NMLS-2}   &\multicolumn{1}{l}{} \\ 
 \cmidrule(lr){3-4} \cmidrule(lr){5-6} \cmidrule(lr){7-8} \cmidrule(lr){9-10} \cmidrule(lr){11-12} \cmidrule(lr){13-14} 
 \multicolumn{1}{l}{Problem name}      &\multicolumn{1}{l}{Dimension} 

 &\multicolumn{1}{l}{$N_i$}   &\multicolumn{1}{l}{$N_f$} 
 &\multicolumn{1}{l}{$N_i$}   &\multicolumn{1}{l}{$N_f$}
 &\multicolumn{1}{l}{$N_i$}   &\multicolumn{1}{l}{$N_f$} 
 &\multicolumn{1}{l}{$N_i$}   &\multicolumn{1}{l}{$N_f$} 
 &\multicolumn{1}{l}{$N_i$}   &\multicolumn{1}{l}{$N_f$}
 &\multicolumn{1}{l}{$N_i$}   &\multicolumn{1}{l}{$N_f$}\\ 
 \cmidrule{1-14} 
 \endfirsthead 
 \multicolumn{5}{l}%
 {Table 4. Numerical results (\textit{continued})}\\[5pt] 
 \hline 
 \endhead 
 \hline 
 \endfoot 
 \endlastfoot 

Beale &2&46&52&42&50&46&53&36&52&38&54&37&54\\ 
Brown badly scaled  &2&Failed&Failed&8&109&8&109&8&109&8&109&8&109\\ 
Full Hess .  FH1 &2&49&56&43&46&50&75&40&57&42&52&41&53\\ 
 Full Hess .  FH2 &2&9&10&9&10&9&10&9&10&9&10&9&10\\ 
Gaussian  &3&4&14&4&14&4&14&4&14&4&14&4&14\\ 
Box three-dim  &3&36&62&36&64&36&67&105&219&882&1854&882&1854\\ 
Gulf r .  and d.  &3&1679&4232&2027&5724&4697&16171&43749&118335&36528&99453&2276&6860\\ 
Staircase 1   &4&13&14&13&14&13&14&13&14&13&14&13&14\\ 
Staircase 2  &4&13&14&13&14&13&14&13&14&13&14&13&14\\ 
Bro .  a .  Dennis  &4&52&56&57&68&52&56&49&62&52&67&49&62\\ 
wood  &4&242&326&138&154&255&520&1055&2291&222&450&805&1705\\ 
Biggs EXP66  &6&567&904&1025&1826&4003&9631&3559&7571&3517&7309&4058&8536\\ 
Penalty II	  &10&579&1271&674&1518&2756&7686&1792&4497&1623&3997&1165&2954\\ 
Variably Dim  &10&1&2&1&2&1&2&1&2&1&2&1&2\\ 
E .  Powell Sin &16&214&348&145&213&5097&11888&1820&3812&2711&5493&2575&5284\\ 
ARGLINB  &100&1075&41863&31&592&31&592&31&592&31&592&31&592\\ 
Diagonal 3  &100&102&119&107&109&106&147&105&150&113&166&103&145\\ 
E .  Rosenbrock  &100&96&208&44&79&63&131&36&82&33&67&36&82\\ 
Diagonal 2 &500&108&132&111&131&99&156&112&181&130&216&109&177\\ 
DIXON3DQ &1000&31289&52569&36487&66657&27809&65253&40299&86697&37983&80702&37983&80702\\ 
E .  Beal &1000&48&55&41&49&48&55&36&56&35&45&36&55\\ 
Fletcher  &1000&26718&44020&28940&51865&32507&74158&40440&85575&14475&29456&35721&74592\\ 
G .  Rosenbrock  &1000&24999&38314&24573&41178&23971&40964&25895&50162&25741&49242&26089&50267\\ 
Par .  Pert .  Quadratic &1000&203&270&179&180&305&590&325&592&411&711&433&731\\ 
 Hager &1000&61&66&57&59&54&65&55&60&57&59&57&59\\ 
 BDQRTIC &1000&82&91&86&87&80&94&81&106&87&98&69&74\\ 
BG2 &1000&44&58&99&159&65&88&196&433&57&104&209&464\\ 
Purt .  Trid. Quadratic &5000&2092&3344&854&1382&1633&3121&1685&3404&1765&3558&1727&3536\\ 
Al .  Pert .  Quadratic  &5000&1602&2522&1831&3170&1026&1866&1956&3999&2052&4097&1117&2192\\ 
Pert .  Quadratic diag   &5000&362&595&470&797&1439&4045&1285&3160&1231&2900&977&2330\\ 
E .  Trid .  1  &5000&32&37&32&33&36&47&35&50&31&33&34&42\\ 
NONSCOMP  &5000&62&63&62&63&54&56&54&57&63&65&54&57\\ 
LIARWHD  &5000&50&80&45&101&63&122&66&191&40&87&53&139\\ 
TRIDIA  &5000&8165&13522&11603&21006&10306&21041&14720&31687&17699&37371&14839&31400\\ 
DIXMAANA  &9000&9&10&9&10&9&10&9&10&9&10&9&10\\ 
DIXMAANB   &9000&8&9&8&9&8&9&8&9&8&9&8&9\\ 
DIXMAANC   &9000&10&11&10&11&10&11&10&11&10&11&10&11\\ 
DIXMAAND  &9000&13&14&13&14&13&14&13&14&13&14&13&14\\ 
DIXMAANE  &9000&1177&1807&987&1695&821&1831&1061&2176&1039&2081&1023&1986\\ 
DIXMAANF &9000&442&675&715&1191&666&1449&641&1335&895&1800&1051&2076\\ 
DIXMAANG &9000&1024&1602&835&1377&734&1612&461&903&1022&2031&732&1457\\ 
DIXMAANH &9000&658&1030&924&1528&776&1706&754&1522&966&1929&878&1726\\ 
DIXMAANI  &9000&6918&11583&6719&12096&6289&14861&10180&22178&7951&16803&9165&19375\\ 
DIXMAANJ   &9000&580&857&475&725&819&1706&661&1239&896&1685&575&1080\\ 
DIXMAANK    &9000&483&741&636&1020&618&1250&853&1703&596&1087&841&1604\\ 
DIXMAANL   &9000&379&552&437&644&607&1244&660&1209&629&1176&477&890\\ 
ARWHEAD &10000&3&4&3&4&3&4&3&4&3&4&3&4\\ 
BDEXP  &10000&18&19&18&19&18&19&18&19&18&19&18&19\\ 
Broyden Tridiagonal  &10000&78&83&71&72&83&98&78&100&72&78&75&96\\ 
COSIN   &10000&Failed&Failed&33&40&Failed&Failed&33&40&33&40&Failed&Failed\\ 
Diagonal 4 &10000&3&4&3&4&3&4&3&4&3&4&3&4\\ 
Diagonal 5   &10000&4&5&4&5&4&5&4&5&4&5&4&5\\ 
Ext .  Tridiagonal 2  &10000&31&36&31&36&30&36&31&36&31&36&31&36\\ 
Diagonal 7  &10000&7&8&7&8&7&8&7&8&7&8&7&8\\ 
Diagonal 8  &10000&6&8&6&8&6&8&6&8&6&8&6&8\\ 
DQDRTIC  &10000&27&28&27&28&27&28&27&28&27&28&27&28\\ 
ENGVAL1 &10000&28&29&28&29&28&29&28&29&28&29&28&29\\ 
EDENSCH &10000&27&28&27&28&27&28&27&28&27&28&27&28\\ 
Ext .  BD1 &10000&16&19&16&19&16&19&16&19&16&20&16&19\\ 
Ext .  DENSCHNB &10000&10&11&10&11&10&11&10&11&10&11&10&11\\ 
Ext .  DENSCHNF  &10000&12&13&12&13&12&13&12&13&12&13&12&13\\ 
Ext .  Himmelblau &10000&15&18&15&18&15&18&15&18&15&18&15&18\\ 
Ext .  Maratos  &10000&76&127&64&124&101&209&78&202&129&340&85&217\\ 
Ext .  Powell   &10000&360&674&231&397&14866&35239&6512&13509&6920&14447&4413&9095\\ 
Ext .  Penalty  &10000&123&2858&6&67&6&67&6&67&6&67&6&67\\ 
Ext .  PSC1   &10000&17&18&17&18&17&18&17&18&17&18&17&18\\ 
Ext .  QP1 &10000&13&14&13&14&13&14&13&14&10&14&13&14\\ 
Ext .   QP2  &10000&51&106&35&73&26&54&23&47&15&30&25&48\\ 
Ext .   TET  &10000&10&13&10&13&10&13&10&13&10&13&10&13\\ 
Ext . Wood  &10000&274&365&139&155&281&575&1157&2464&242&464&1163&2474\\ 
Ext .  White a .  Holst &10000&122&157&76&116&103&168&68&154&66&141&68&154\\ 
Ext .   Tridiagonal 1 &10000&32&37&35&37&36&47&36&51&31&33&34&42\\ 
  FH3  &10000&3&4&3&4&3&4&3&4&3&4&3&4\\ 
G .  PSC1  &10000&26&27&26&27&26&27&26&27&26&27&26&27\\ 
G .  Tridiagonal &10000&26&27&26&27&26&27&26&27&26&27&26&27\\ 
HIMMELBG   &10000&20&21&20&21&20&21&20&21&20&21&20&21\\ 
NONDQUAR   &10000&5734&9701&6121&11539&10738&25859&8646&18582&9890&21268&9310&19820\\ 
Perturbed  &10000&3332&5371&1996&3418&3028&6319&3610&7646&2894&5941&3096&6337\\ 
QUARTC   &10000&1&2&1&2&1&2&1&2&1&2&1&2\\ 
QF1 &10000&1914&3035&3367&6017&3070&6296&3901&8223&3339&6896&2887&5878\\ 
QF2  &10000&1853&3020&1857&3161&2424&4866&2269&4724&2398&4889&3277&6715\\ 
Raydan  &10000&1&2&1&2&1&2&1&2&1&2&1&2\\ 
SINCOS  &10000&17&18&17&18&17&18&17&18&17&18&17&18\\ 
TRIDIA &10000&21252&35581&28315&51638&18599&38133&24617&52700&25154&53422&30126&64210\\ 
\hline
Average&&2951.65&27514.52&1956.45&3512.77&2759.61&9558.57&2932.75&6493.89&2538.79&5542.02&2991.69&6378.95\\
\hline 
\end{longtable} 
\end{landscape} 

 
 \renewcommand{\arraystretch}{0.98} 
 \begin{landscape} 
 \small 
 \begin{longtable}{llllllllllllllll} 
 \multicolumn{5}{l} 
 {Table 5. Numerical results with the Barzilai-Borwein direction (BB2)}\\ 
 \cmidrule{1-14} 
 \multicolumn{1}{l}{}        &\multicolumn{1}{l}{} 
 &\multicolumn{1}{l}{NMLS-G}  &\multicolumn{1}{l}{}\hspace{10mm} 
 &\multicolumn{1}{l}{NMLS-H} &\multicolumn{1}{l}{}\hspace{10mm} 
 &\multicolumn{1}{l}{NMLS-N}  &\multicolumn{1}{l}{} \hspace{10mm} 
 &\multicolumn{1}{l}{NMLS-M}  &\multicolumn{1}{l}{}\hspace{10mm} 
 &\multicolumn{1}{l}{NMLS-1} &\multicolumn{1}{l}{}\hspace{10mm} 
 &\multicolumn{1}{l}{NMLS-2}   &\multicolumn{1}{l}{} \\ 
 \cmidrule(lr){3-4} \cmidrule(lr){5-6} \cmidrule(lr){7-8} \cmidrule(lr){9-10} \cmidrule(lr){11-12} \cmidrule(lr){13-14} 
 \multicolumn{1}{l}{Problem name}      &\multicolumn{1}{l}{Dimension} 

 &\multicolumn{1}{l}{$N_i$}   &\multicolumn{1}{l}{$N_f$} 
 &\multicolumn{1}{l}{$N_i$}   &\multicolumn{1}{l}{$N_f$}
 &\multicolumn{1}{l}{$N_i$}   &\multicolumn{1}{l}{$N_f$} 
 &\multicolumn{1}{l}{$N_i$}   &\multicolumn{1}{l}{$N_f$} 
 &\multicolumn{1}{l}{$N_i$}   &\multicolumn{1}{l}{$N_f$}
 &\multicolumn{1}{l}{$N_i$}   &\multicolumn{1}{l}{$N_f$}\\ 
 \cmidrule{1-14} 
 \endfirsthead 
 \multicolumn{5}{l}%
 {Table 4. Numerical results (\textit{continued})}\\[5pt] 
 \hline 
 \endhead 
 \hline 
 \endfoot 
 \endlastfoot 
Beale &2&30&31&30&31&30&31&31&36&30&35&32&37\\ 
Brown badly scaled  &2&Failed&Failed&8&109&8&109&8&109&8&109&8&109\\ 
Full Hess .  FH1 &2&38&40&38&40&33&40&38&41&38&44&33&40\\ 
 Full Hess .  FH2 &2&9&10&9&10&9&10&9&10&9&10&9&10\\ 
Gaussian  &3&4&14&4&14&4&14&4&14&4&14&4&14\\ 
Box three-dim  &3&72&91&75&89&71&93&72&91&71&91&72&91\\ 
Gulf r .  and d.  &3&532&578&762&848&531&633&645&737&690&800&631&742\\ 
Staircase 1   &4&7&8&7&8&7&8&7&8&7&8&7&8\\ 
Staircase 2  &4&7&8&7&8&7&8&7&8&7&8&7&8\\ 
Bro .  a .  Dennis  &4&49&50&49&50&49&50&49&50&49&50&47&51\\ 
wood  &4&581&704&705&808&552&653&638&793&602&750&758&953\\ 
Biggs EXP66  &6&453&495&413&457&605&702&590&661&536&597&473&529\\ 
Penalty II	  &10&223&301&73&76&329&446&433&556&333&436&421&536\\ 
Variably Dim  &10&1&2&1&2&1&2&1&2&1&2&1&2\\ 
E .  Powell Sin &16&133&152&120&131&162&220&134&175&140&184&156&205\\ 
Watson  &31&4961&5148&7062&7312&5216&5606&6798&7100&7677&8102&6079&6472\\ 
HARKERP2  &50&27851&28183&Failed&Failed&15438&18678&32888&33460&34703&36001&23470&24294\\
ARGLINB  &100&1069&40515&30&591&30&591&30&591&30&591&30&591\\ 
Diagonal 3  &100&92&93&92&93&108&114&116&121&116&121&116&121\\ 
E .  Rosenbrock  &100&58&65&58&65&64&73&78&95&73&100&74&102\\ 
Diagonal 2 &500&112&115&103&105&134&145&92&101&115&120&104&120\\ 
DIXON3DQ &1000&8286&8363&9367&9488&10934&11188&6567&6654&6856&6958&6567&6654\\ 
E .  Beal &1000&28&29&28&29&28&29&28&32&28&32&30&36\\ 
Fletcher  &1000&4569&5626&8089&9784&4445&5550&11476&13223&11407&13141&11385&13112\\ 
G .  Rosenbrock  &1000&24950&28069&23700&26822&23814&26980&23673&26804&25645&28796&23797&26941\\ 
Par .  Pert .  Quadratic &1000&218&233&212&217&244&269&308&328&314&342&314&342\\ 
 Hager &1000&48&50&48&50&48&50&48&50&48&50&48&50\\ 
 BDQRTIC &1000&88&89&88&89&93&98&96&100&82&88&82&88\\ 
BG2 &1000&55&60&55&60&79&99&68&78&75&99&100&118\\ 
POWER  &1000&29416&29678&14753&14914&27012&27355&28046&28376&25356&25678&21596&21839\\ 
Purt .  Trid. Quadratic &5000&981&1009&825&852&932&1002&1021&1059&1318&1381&780&817\\ 
Al .  Pert .  Quadratic  &5000&840&866&1136&1173&1019&1095&954&1009&935&991&954&1009\\ 
Pert .  Quadratic diag   &5000&273&299&308&312&357&430&299&332&308&353&332&368\\ 
E .  Hiebert  &5000&3181&3185&3181&3185&3181&3185&3181&3185&3181&3185&3181&3185\\ 
Fletcher   &5000&23321&28385&22936&31043&Failed&Failed&17180&25290&18139&26278&17864&25970\\ 
E .  Trid .  1  &5000&31&32&31&32&31&32&31&32&31&32&31&32\\ 
NONSCOMP  &5000&47&48&47&48&47&48&47&48&47&48&47&48\\ 
LIARWHD  &5000&54&55&49&53&49&51&55&71&44&57&47&64\\ 
TRIDIA  &5000&4804&4913&4076&4162&3574&3685&4319&4409&5477&5634&3334&3430\\ 
DIXMAANA  &9000&9&10&9&10&9&10&9&10&9&10&9&10\\ 
DIXMAANB   &9000&8&9&8&9&8&9&8&9&8&9&8&9\\ 
DIXMAANC   &9000&10&11&10&11&10&11&10&11&10&11&10&11\\ 
DIXMAAND  &9000&13&14&13&14&13&14&13&14&13&14&13&14\\ 
DIXMAANE  &9000&537&551&622&647&483&537&853&883&764&792&809&848\\ 
DIXMAANF &9000&631&637&478&502&473&509&473&489&521&543&312&328\\ 
DIXMAANG &9000&527&543&374&382&589&638&595&612&337&361&337&361\\ 
DIXMAANH &9000&564&585&678&693&419&453&594&609&500&520&522&550\\ 
DIXMAANI  &9000&2608&2650&3197&3239&3815&3965&3450&3522&2898&2979&3152&3226\\ 
DIXMAANJ   &9000&531&546&605&622&634&667&394&404&394&415&406&417\\ 
DIXMAANK    &9000&364&374&416&428&410&438&395&410&369&389&510&527\\ 
DIXMAANL   &9000&365&377&435&452&396&424&379&399&324&348&400&420\\ 
ARWHEAD &10000&3&4&3&4&3&4&3&4&3&4&3&4\\ 
BDEXP  &10000&18&19&18&19&18&19&18&19&18&19&18&19\\ 
Broyden Tridiagonal  &10000&92&95&99&100&97&103&98&109&110&116&97&110\\ 
COSIN   &10000&Failed&Failed&35&44&Failed&Failed&40&50&35&44&40&65\\ 
Diagonal 4 &10000&3&4&3&4&3&4&3&4&3&4&3&4\\ 
Diagonal 5   &10000&4&5&4&5&4&5&4&5&4&5&4&5\\ 
Ext .  Tridiagonal 2  &10000&33&38&33&38&37&44&33&38&33&38&33&38\\ 
Diagonal 7  &10000&7&8&7&8&7&8&7&8&7&8&7&8\\ 
Diagonal 8  &10000&6&8&6&8&6&8&6&8&6&8&6&8\\ 
DQDRTIC  &10000&21&22&21&22&21&22&21&22&21&22&21&22\\ 
ENGVAL1 &10000&31&32&31&32&31&32&31&32&31&32&31&32\\ 
EDENSCH &10000&30&31&30&31&30&31&30&31&30&31&30&31\\ 
Ext .  BD1 &10000&15&18&15&18&15&18&15&18&14&18&15&18\\ 
Ext .  DENSCHNB &10000&10&11&10&11&10&11&10&11&10&11&10&11\\ 
Ext .  DENSCHNF  &10000&13&14&13&14&13&14&13&14&13&14&13&14\\ 
Ext .  Himmelblau &10000&15&17&15&17&15&17&15&17&15&17&15&17\\ 
Ext .  Maratos  &10000&119&144&115&141&119&144&136&182&135&185&136&182\\ 
Ext .  Powell   &10000&194&226&166&185&187&241&202&273&186&256&243&318\\ 
Ext .  Penalty  &10000&122&2857&5&66&5&66&5&66&5&66&5&66\\ 
Ext .  PSC1   &10000&14&15&14&15&14&15&14&15&14&15&14&15\\ 
Ext .  QP1 &10000&13&14&13&14&13&14&13&14&10&14&13&14\\ 
Ext .   QP2  &10000&51&106&35&73&26&54&23&47&15&30&23&47\\ 
Ext .   TET  &10000&9&12&9&12&9&12&9&12&9&12&9&12\\ 
Ext . Wood  &10000&607&726&695&804&551&658&691&866&662&845&825&1047\\ 
Ext .  White a .  Holst &10000&66&68&71&72&66&68&88&107&72&89&88&108\\ 
Ext .   Tridiagonal 1 &10000&33&35&34&35&33&37&33&35&33&37&33&37\\ 
  FH3  &10000&3&4&3&4&3&4&3&4&3&4&3&4\\ 
G .  PSC1  &10000&26&27&26&27&26&27&26&27&26&27&26&27\\ 
G .  Tridiagonal &10000&24&25&24&25&24&25&24&25&24&25&24&25\\ 
HIMMELBG   &10000&21&22&21&22&21&22&21&22&21&22&21&22\\ 
NONDQUAR   &10000&2856&2916&2976&3035&3025&3193&2986&3068&2888&2999&2996&3092\\ 
Perturbed  &10000&1599&1650&1298&1337&1612&1732&1480&1525&2074&2156&1462&1524\\ 
QUARTC   &10000&1&2&1&2&1&2&1&2&1&2&1&2\\ 
QF1 &10000&1043&1097&1909&1961&1636&1714&1259&1313&1854&1925&1282&1339\\ 
QF2  &10000&1525&1576&1738&1786&1482&1557&1388&1449&1326&1402&1260&1321\\ 
Raydan  &10000&1&2&1&2&1&2&1&2&1&2&1&2\\ 
SINCOS  &10000&14&15&14&15&14&15&14&15&14&15&14&15\\ 
TRIDIA &10000&9767&9898&5519&5625&5839&5976&5073&5157&5742&5893&10527&10712\\ 
\hline
Average&&2944.38& 24306.72 &  1915.18&  2100.42&  2489.36 & 2685.27 & 1809.83& 1998.06 & 1866.80& 2068.75& 1672.60 &1867.37\\
\hline
\end{longtable} 
\end{landscape}

\end{document}